\newcommand{\rnk}[1]{|#1|}
\newcommand{\putaway}[1]{}
\newcommand{\peq}{\preccurlyeq}
\newcommand\varOne{{\bm z}}
\newcommand\varThree{{\bm z}}
\newcommand\VarOne{{\bm X}}
\newcommand\VarTwo{{\bm Y}}
\newcommand\ConOne{{\bm X}}
\newcommand\ti{{\rm TI}}
\newcommand\wf{{\rm WF}}
\newcommand\wo{{\rm WO}}
\newcommand\ProofP[2]{[\tre]^{#1}_{#2}}
\newcommand\ProofPX[3]{[\tre]^{#1}_{#2}}
\newcommand\ModelsP[2]{[\mo]^{#1}_{#2}}
\newcommand\ModelsPX[3]{[\mo]^{#1}_{#2}}
\newcommand\RecP[2]{{[\ite]^{#1}_{#2}}}
\newcommand\RecPX[3]{[\ite]^{ }_{#2}}
\newcommand\FixP[2]{{[\fx]^{#1}_{#2}}}
\newcommand\ClosedP[2]{{[\clsd]^{#1}_{#2}}}
\newcommand\FixPX[3]{[\fx]^{#1}_{#2}}
\newcommand\ClosedPX[3]{[\clsd]^{#1}_{#2}}
\newcommand{\cpar}{\rho}
\newcommand{\picaform}{{\Pi}^1_1/{\bm \Sigma}^0_2}
\newcommand\vecgamma{{\Gamma}}
\newcommand\vecdelta{{\Delta}}
\newcommand\piti{{\sf TI}_0}
\newcommand\OmegaRule{{\omega\mbox{-}{\rm Rule}}}
\newcommand\RulePred{{\rm Rule}}
\newcommand\taitc{\mbox{\sc Tait}}
\newcommand\pica{{\bm \Pi}^1_1\mbox{-}{\sf CA}_0}
\newcommand{\Prop}[3]{ [{#2} ]^{#1}_{#3}}
\newcommand{\tre}{{\sf P}}
\newcommand{\ite}{{\sf R}}
\newcommand{\fx}{{\sf I}}
\newcommand{\clsd}{{\sf C}}
\newcommand{\mo}{{\sf M}}
\newcommand{\pvar}{\sf X}
\newcommand\spc[2]{{\rm SPC}^\rho_{#1}({#2})}
\newcommand\PsAlt\blacklozenge
\newcommand\NcAlt\blacksquare
\newcommand\yespr[1]{}
\newcommand\sat{S}
\newcommand\model{\ensuremath{\mathfrak M}}
\def\eca{\ensuremath{{{\sf RCA}^*_0}}\xspace}
\def\aca{{{\sf ACA}_0}}
\def\rcaa{{{\sf RCA}^\ast_0}}
\def\atr{{{\sf ATR}_0}}
\def\compax{{\rm CA}}
\def\lb{\left\llbracket}
\def\rb{\right\rrbracket}
\newcommand{\PRFN}[3]{\omega_{#1}\mbox{-}{\rm RFN}^{#2}_{#3}}
\newcommand{\PCons}[2]{\omega_{#1}\mbox{-}{\rm Cons}^{#2}}
\newcommand{\PCONS}[3]{\omega_{#1}\mbox{-}{\rm CONS}^{#2}_#3}
\newcommand\Neg{{\sim}}
\def\lb{\left\llbracket}
\def\rb{\right\rrbracket}
\def\nc{{\Box}}
\def\seq{\succcurlyeq}
\def\<{\left\langle}
\def\>{\right >}
\newcommand{\pra}{\ensuremath{{\mathsf{PRA}}}\xspace}
\newcommand{\rca}{\ensuremath{{\mathsf{RCA}_0}}\xspace}
\newcommand{\pa}{\ensuremath{{\mathsf{PA}}}\xspace}
\newcommand{\Robinson}{\ensuremath{{\mathsf{Q}}}\xspace}
\newcommand{\ea}{\ensuremath{{\mathsf{EA}}}\xspace}
\begin{document}
\title{The many faces of omega-logic\thanks{Partially funded by the FWO-FWF Lead Agency Grant G030620N. }}
%
%
\author{David Fern\'andez-Duque\inst{1}\orcidID{0000-0001-8604-4183}}
\authorrunning{D. Fern\'andez-Duque}
%
\institute{Department of Mathematics WE16\\
Ghent University\\
Krijgslaan 281, S8\\
9000 Ghent,  Belgium\\
09 264 49 12\\
\email{david.fernandezduque@ugent.be}}
\maketitle              
\begin{abstract}
We consider several formalizations in the language of second-order arithmetic of {\em ``The formula $\phi$ is a theorem of $\omega$-logic'',} including some which have been studied in the literature and a new variant defined via a least fixed point.
We analyze the provability of relations between these different formalizations in standard theories of reverse mathematics.
With this, we study the strength of various reflection principles arising from these notions of provability, surveying known results and establishing some new equivalences, including a characterization of $\pica$ in terms of our fixed-point formalization of $\omega$-logic.
\keywords{omega-logic \and reflection principles \and second order arithmetic \and reverse mathematics \and proof theory}
\end{abstract}

\section{Introduction}

The $\omega$-rule is an infinitary deduction rule that has the form
\[
\dfrac{ \ \ \phi(\bar 0),\Gamma \ \ \phi(\bar 1),\Gamma \ \ \phi(\bar 2),\Gamma \ \ \hdots \ \ }{\forall x\phi(x),\Gamma},
\]
with one premise for each natural number.
Augmenting the Tait calculus with this infinitary rule gives rise to {\em $\omega$-logic,} which can be readily formalized within second order arithmetic, although the precise details of the formalization may vary.
In fact, there are at least four ways to model $\omega$-logic in this context. Informally, they are:
\begin{enumerate}[label=({\it\roman*})]

\item\label{ItTreeDer} There is a well-founded derivation tree formalizing an $\omega$-proof of $\phi$, in which case we will write $\ProofP{ }{}\phi$.

\item\label{ItLevelDer} There is a well-order $\Lambda$ such that $\phi$ belongs to the set of theorems of $\omega$-logic defined by transfinite recursion on $\Lambda$, in which case we will write $\RecP{ }{}\phi$.

\item\label{ItClosed} The formula $\phi$ belongs to all sets closed under the rules and axioms of $\omega$-logic, which we denote $\ClosedP{ }{}\phi$.

\item\label{ItFix} The formula $\phi$ belongs to the least set closed under the rules and axioms of $\omega$-logic, which we denote $\FixP{ }{}\phi$.

\end{enumerate}
Although we will discuss these in greater detail later, intuitively $\ProofP{ }{}\phi$ gives a `local' view of $\phi$ being a theorem of $\omega$-logic by considering (infinite) $\omega$-proofs of $\phi$, while $\FixP{ }{}\phi$ gives a more global perspective, describing the set of theorems of $\omega$-logic as a whole via an inductive definition. Meanwhile, $\RecP{}{}\phi$ describes the approximations from below to the fixed point used in $\FixP{}{}\phi$ while $\ClosedP{}{}\phi$ describes the approximations from above.
The most standard of these formalizations is $\ProofP{ }{}\phi$ (see e.g.~\cite{Arai1998,GirardProofTheory}), but formalizations using transfinite recursion are convenient for establishing proof-theoretic semantics for the polymodal provability logics ${\sf GLP}_\Lambda$ with ordinal modalities \cite{FernandezJoosten:2013:OmegaRuleInterpretationGLP}.
Closely related is the notion:
\begin{enumerate}[label=({\it v})]

\item The formula $\phi$ is true on every $\omega$-model, which we denote $\ModelsP{ }{}\phi$.

\end{enumerate}
While this does not look like a notion of `provability', it is equivalent to $\omega$-provability, in view of the Henkin-Orey completeness theorem \cite{Orey56}.

Over a strong enough formal theory, one can show that all of these notions of $\omega$-provability are equivalent. However, from the point of view of a weak theory, they may vary in strength. For $\pvar \in \{\tre,\ite,{\sf C},\mo,\fx\}$ and $A\subseteq \mathbb N$, let us write $[{\pvar}] \phi (\dot A )$ if $\phi$ is provable in the sense of $\pvar$ from the atomic diagram of $A$. 

Theorem \ref{TheoEquivATR} states the provable equivalences between the various formalizations of $\omega$-logic.
We show that $\ProofPX {} T {\bm A}\phi (\dot{\bm A})$, $\ModelsPX {} T {\bm A}\phi (\dot{\bm A})$, and $\ClosedPX {} T {\bm A}\phi (\dot{\bm A})$ are provably equivalent over $\eca$.
Moreover, for $\rho<\omega$, $\ProofPX \rho T {\bm A}\phi (\dot{\bm A})$, $\ModelsPX \rho T {\bm A}\phi (\dot{\bm A})$, and $\ClosedPX \rho T {\bm A}\phi (\dot{\bm A})$ are provably equivalent over $\aca$.
That these notions are also equivalent to $\RecPX {} T {\bm A}\phi (\dot{\bm A})$ is provable in $\atr$, and that they are equivalent to  $\FixPX {} T {\bm A}\phi (\dot{\bm A})$ is provable in $\pica$.
This is largely a synthesis of known results, including the above-mentioned Henkin-Orey completeness theorem, but some equivalences are new, particularly those involving $\ite$ and $\fx $.

Once $\omega$-logic has been formalized, one can proceed to define the corresponding reflection principles.
{\em Reflection principles} in formal arithmetic are statements of the form {\em ``If $\phi$ is a theorem of $T$, then $\phi$''} \cite{KreiselLevy:1968:ReflectionPrinciplesAndTheirUse}. Using notation from provability logic \cite{Boolos:1993:LogicOfProvability}, for a computably enumerable theory $T$ we may use $\nc_T\phi$ to denote a natural formalization of {\em ``$\phi$ is a theorem of $T$''.} Then, the above statement may be written succinctly as $\nc_T\phi\to\phi$. If $\phi$ is a sentence, this gives us an instance of {\em local reflection,} which can almost never be proven within $T$ itself. For example, setting $\phi\equiv\, {\tt 0 =1}$, we see that $\nc_T\phi\to\phi$ is equivalent to $\Neg\nc_T{\tt 0 = 1}$, which asserts the consistency of $T$ and hence is unprovable within $T$ itself (if $T$ satisfies the assumptions of G\"odels second incompleteness theorem). More generally, by L\"ob's theorem we have that $T\vdash \nc_T\phi\to\phi$ {\em only} if $\phi$ is already a theorem of $T$ \cite{Lob:1955:SolutionProblemHenkin}.

We can extend reflection to formulas $\phi(x)$ to obtain {\em uniform reflection principles,} given by the scheme
\[\forall x \big (\nc_T\phi(\dot x)\to \phi(x)\big),\]
where $\dot x$ indicates that $x$ should be substituted by its numeral. When $\phi$ ranges over all formulas, this scheme is denoted ${\rm RFN}[T]$.
Uniform reflection principles are particularly appealing because they sometimes give rise to familiar theories. If we use $\pra$ to denote {\em primitive recursive arithmetic,} Kreisel and L\'evy~\cite{KreiselLevy:1968:ReflectionPrinciplesAndTheirUse} proved that
\[{\sf PA}\equiv{\sf PRA}+{\rm RFN}[{\sf PRA}];\]
in fact, we may replace $\pra$ by the weaker {\em elementary arithmetic} ($\ea$), obtained by restricting the induction schema in Peano arithmetic to $\Delta^0_0$ formulas and adding an axiom asserting that the exponential function is total \cite{Beklemishev:1997:InductionRules}.

Similar principles can be defined for $\omega$-logic.
For $\pvar \in \{\tre,\ite,{\sf C},\mo,\fx\}$, we define a schema
\[\PRFN{{\pvar}} {}{}  \equiv \forall A \ \forall n \ \big ( [{\pvar}] \phi(\dot n,\dot A) \to \phi (n,A)\big ).\]
If $\Gamma$ is a set of formulas, $\PRFN {\pvar}{}\Gamma$ is the restriction of this scheme to $\phi\in \Gamma$. Over $\rca$ we have that:
\begin{align}
\label{ItIntroOne} \PRFN \tre {}{}  &\equiv {\bm \Pi}^1_\omega\text{-}\piti;\\
\label{ItIntroTwo}  \PRFN \ite {}{{{\bm \Pi}^1_2}}&\equiv \atr.
\end{align}
(We will review all relevant theories of second order arithmetic in \S\ref{SecSOA}). The first item is proven in \cite{Arai1998} and the second in \cite{Cordon2017}. As we will see, if we use $\PRFN {\pvar}{}\Gamma[T]$ to denote a variant of the scheme where $\omega$-logic is extended by theorems of $T$, \eqref{ItIntroTwo} generalizes to
\begin{equation}\label{EqNewTheoremATR}
\PRFN \ite { }{{{\bm \Sigma}^1_{n+1}}}[\aca]\equiv \atr+{\bm \Pi}^1_n\text{-}{\rm TI}
\end{equation}
(which is just ${\bm \Pi}^1_n$-$\piti$ if $n>1$). Moreover, \eqref{ItIntroOne} also holds for $\omega$-model reflection, the scheme asserting that any formula true in every $\omega$-model must be true \cite{Jager1999}. This begs the question: is $\PRFN\fx {}{}$ also equivalent to a natural theory? In this article, we answer the question affirmatively, and prove that:
\begin{align}
\label{EqTheoOne}\PRFN\fx {}{{{\bm \Pi}^1_3}}&\equiv\pica;\\
\label{EqTheoTwo}\PRFN\fx {}{{{\bm \Sigma}^1_{n+1}}}[\aca]&\equiv\pica+{\bm \Pi}^1_n\text{\rm -TI}.
\end{align}
Both equivalences are proven over the theory $\eca$ of weak recursive comprehension, which is strictly weaker than $\rca$.

\subsection*{Layout of the article}

In \S \ref{SecSOA} we establish some basic notation we will use and review the subsystems of second-order arithmetic that will be of interest to us. In \S \ref{SecForm} we review formalizations of $\omega$-logic in the literature, and in \S \ref{SecMod} we review $\omega$-models and their corresponding reflection principles. In \S \ref{SecFix} we give our formalizations related to inductive definitions. In \S \ref{SecComp} we discuss completeness results for $\omega$-logic, with which we prove Theorem \ref{TheoEquivATR} and \eqref{EqNewTheoremATR}, and in \S \ref{section:OracleConsistencyAndOracleReflection} introduce the reflection principles based on our fixed point construction and prove partial results leading to \eqref{EqTheoOne} and \eqref{EqTheoTwo}. The latter are proven in \S \ref{section:CountableCodedModels} using $\beta$-models.

\section{Second-order arithmetical theories}\label{SecSOA}

In this section we review some basic notions of second-order arithmetic and mention some important theories that will appear throughout the article.
 
\subsection{Conventions of syntax}\label{SubSecSynt}

It will be convenient to work within a Tait-style calculus, so we will consider a language without negation, except on primitive predicates. Thus terms and formulas will be built from
the symbols ${\tt 0},{\tt 1}, {+} ,{\times},  = , \not=, \in,\not\in $ representing the standard constants, operations and relations on the natural numbers, along with the Booleans $\wedge,\vee$ and the quantifires $\forall,\exists$. If $\phi$ is any formula, its {\em rank,} $\rnk\phi$, is the number of logical symbols (Booleans and quantifiers) that appear in it. We assume a countably infinite set of first-order variables $n,m,x,y,z\dots$, as well as a countably infinite set of second-order variables. It will be convenient to assume that the second-order variables are enumerated by ${\bm V}=\langle V_i\rangle_{i\in \mathbb N}$, although we may also use $X,Y,Z,\dots$ to denote set-variables.  Tuples of first-order terms or second-order variables will be denoted with a boldface font, e.g. $\bm t$, $\bm X$. In general, if ${\bm S}= \langle S_i \rangle_{i\in\mathbb N}$ is a sequence and $n\in \mathbb N$ we will write ${\bm S}_{<n}$ for $\langle S_i\rangle_{i<n}$. We also include countably many set-constants ${\bm C}=\langle C_i\rangle_{i\in \mathbb N}$, which will be used to name `externally given' sets (see \S\ref{SubsOr}). 

We define $x\leq y$ by $\exists z \ (y=x+z)$ and $x<y$ by $x+{\tt 1}\leq y$. In the meta-language we may also use the symbol `$=$', although sometimes we use `$\equiv$' instead in order to distinguish it from the object-language equality. Since we have no negation in the language, we define $\Neg\phi$ by using De Morgan's laws and the classical dualities for quantifiers. In particular, we define $\phi\to\psi$ by $\Neg\phi\vee\psi$. The set of all formulas will be denoted ${\bm \Pi}^1_\omega$.

Fix some elementary G\"odel numbering mapping a formula $\psi\in{\bm \Pi}^1_\omega$ to a natural number $\ulcorner \psi \urcorner$ in such a way that terms and sequents of formulas are also assigned G\"odel numbers. Since we will be working mainly inside theories of arithmetic, we will often identify $\psi$ with $\ulcorner \psi \urcorner$. For a natural number $n$, define a term $\bar n$ recursively by $\bar 0={\tt 0}$ and $\overline{n+1}=(\bar n)+{\tt 1}$. We will assume that the G\"odel numbering has the natural property that $\ulcorner \psi\urcorner < \ulcorner \phi\urcorner$ whenever $\psi$ is a proper subformula of $\phi$.

We use ${\bm\Delta}^0_0$ to denote the set of all formulas, possibly with set parameters but without the occurrence of the set-constants $C_i$, where no second-order quantifiers appear and all first-order quantifiers are {\em bounded}, that is, of the form $\forall x< t \ \phi$ or $\exists x< t \ \phi$.
We simultaneously define ${\bm\Sigma}^0_{0}={\bm\Pi}^0_{0}={\bm\Delta}^0_0$ and recursively define ${\bm\Sigma}^0_{n+1}$ to be the set of all formulas of the form $\exists x \phi$ with $\phi\in {\bm\Pi}^0_{n}$, and similarly ${\bm\Pi}^0_{n+1}$ to be the set of all formulas of the form $\forall x \phi$ with $\phi\in {\bm\Sigma}^0_{n}$. We denote by ${\bm\Pi}^0_\omega$ the union of all ${\bm\Pi}^0_n$; these are the {\em arithmetical formulas.} 

The classes ${\bm\Sigma}^1_n,{\bm\Pi}^1_n$ are defined analogously, but using second-order quantifiers, and setting ${\bm \Sigma}_0^1 = {\bm\Pi}^1_0 = {\bm\Delta}^1_0 = {\bm\Pi}^0_\omega$. It is well-known that every second-order formula is equivalent to another in one of the above forms.  We use a lightface font for the analogous classes where no set-variables appear free: $\Delta^m_n,\Pi^m_n,\Sigma^m_n$. For lightface classes of formulas, we may write $\Gamma({\VarTwo})$ to indicate that the second-order variables in ${\VarTwo}$ may appear free (and no others). Finally, if $\Gamma$ is a set of formulas and $n$ is a natural number, we use ${\Pi}^1_n/\Gamma$ to denote the set of formulas of the form $\forall X_{n}\exists X_{n-1},\dots,Q_1 X_1 \phi$, with $\phi\in\Gamma$ and $Q_1\in \{\forall,\exists\}$.


We will also use {\em pseudo-terms} to simplify notation, where an expression $\phi(t({\bm x}))$ should be understood as a shorthand for $\exists y  \ \big (\psi({\bm x},y)\wedge\phi(y) \big )$, with $\psi$ a $\Delta^0_0$ formula defining the graph of the intended interpretation of $t$.
The domain of the functions defined by these pseudo-terms may be a proper subset of $\mathbb N$.

Let us list some of the (pseudo-)terms we will use:

\begin{enumerate}

\item A pseudo-term $2^x$ for the exponential function.

\item A term $\langle x,y\rangle$ which returns a code of the ordered pair formed by $x$ and $y$ and projection terms so that $(\langle x,y\rangle)_0 = x$ and $(\langle x,y\rangle)_1 = y$. We will overload this notation by also using it for sequences, coded in a standard way. As with tuples of variables, we use a boldface font when a first-order object is meant to be regarded as a sequence. For a sequence $\bm s$, we will also use $({\bm s})_i$ to denote a pseudo-term which picks out the $i^{\rm th}$ element of $\bm s$ if it exists, and is undefined otherwise, and $|{\bm s}|$ denotes a pseudo-term for the length of $\bm s$. If $n\in\mathbb N$, $\bm s * n$ denotes the sequence obtained by adjoining $n$ to $\bm s$ as its last element.
We will assume that for all $\bm s$ and $i<|\bm s|$ we have that $\max \{(\bm s)_i,|\bm s|\} \leq \bm s$.

\item A term $\overline x$ mapping a natural number to the code of its numeral.

\item A term $\lb x\rb$ which, when $x$ codes a closed term $t$, returns the value of $t$ as a natural number.

\item For every formula $\phi$ and variables $x_0, \ldots, x_m$, a term\linebreak $\phi(\dot x_0, \ldots, \dot x_m)$ which, given natural numbers $n_0, \ldots, n_m$, returns the code of the outcome of $\phi[{\bm x}/\bar{\varOne}]$, i.e., the code of $\phi(\bar n_0, \ldots, \bar n_m)$. We will often write such a term as $\phi(\dot{\bm x})$.
\end{enumerate}

Note that we may also use this notation in the meta-language. As is standard, we may define $X\subseteq Y$ by $\forall x (x\in X\rightarrow x\in Y)$, and $X = Y$ by $X\subseteq Y\wedge Y\subseteq X$. If the set $F$ is meant to represent a function, we may write $y=F(x)$ instead of $\langle x,y\rangle\in F$. {\em Sequents} will be first-order objects of the form $\vecgamma=\langle\gamma_1,\dots,\gamma_n\rangle$, where each $\gamma_i$ is a formula. We will treat sequents as sets, defining $\phi\in \vecgamma$ by $\exists i< |\vecgamma| \ \phi=(\vecgamma)_i$, and define $\vecgamma\subseteq\vecdelta$ similarly.
We may write $\vecgamma,\phi$ or $(\vecgamma,\phi)$ instead of $\vecgamma\ast\phi$. We similarly use $\vecgamma,\vecdelta$ to denote the concatenation of $\vecgamma$ and $\vecdelta$. The empty sequent will be denoted by $\bot$; observe that we do not take it to be a symbol of our formal language.

\subsection{Basic rules and axioms}

We will work with a one-sided Tait-style calculus, which proves sequents of the form $\vecgamma=\langle\gamma_i\rangle_{i<n}$, as defined in e.g. \cite{Pohlers:2009:PTBook}. In such a calculus, negation may only be applied to atomic formulas.
We assume that the Tait calculus contains enough axioms and rules so that $\vecgamma,\alpha$ is derivable whenever $\alpha$ is a true atomic sentence; this is not a strong assumption, as standard calculi have this property.
We will also assume that at least the following rules are available:
\[
\begin{array}{cclcc}
{(\mbox{\sc lem})}&\displaystyle\frac{}{\vecgamma,\alpha,\Neg\alpha}&&  
{({=})}&\displaystyle\frac{\vecgamma, \ \alpha \phantom{blabla} \vecgamma, r=r'}{\vecgamma, \ \alpha'}
\\\\
(\wedge)&\displaystyle\frac{\vecgamma,\phi \phantom{blabla} \vecgamma,\psi}{\vecgamma,\phi\wedge\psi}&
&(\vee) &\displaystyle\frac{\vecgamma,\phi,\psi}{\vecgamma,\phi\vee\psi}  \\\\
(\forall^0) & \displaystyle\frac{\vecgamma,\phi(v)}{\vecgamma,\forall x\phi(x)}& \phantom{(v not free )}&
(\exists^0)&\displaystyle\frac{\vecgamma,\phi(t)}{\vecgamma,\exists x\phi(x)} \\\\
(\forall^1) & \displaystyle\frac{\vecgamma,\phi(V)}{\vecgamma,\forall X\phi(X)}& \phantom{(v not free )}&
(\exists^1)&\displaystyle\frac{\vecgamma,\phi(Y)}{\vecgamma,\exists X\phi(X)} \\\\
(\mbox{\sc cut})&\displaystyle\frac{\vecgamma,\phi\phantom{blabla}\vecgamma,\Neg\phi}\vecgamma,&&
\end{array}
\]
where $\alpha$ is atomic, $v,V$ do not appear free in $\vecgamma$, and $\alpha'$ is obtained from $\alpha$ by replacing some instances of $r$ by $r'$.
We denote this calculus by $\taitc$; $\taitc^\rho$ is the restriction of $\taitc$ which allows cuts only for formulas of rank less than $\rho\leq\omega$ (in particular, $\taitc=\taitc^\omega$).

We identify a theory $T$ with its set of axioms, and a sequent $\Gamma$ is derivable in $T$ (denoted $T\vdash \Gamma$) if it is derivable in the Tait calculus augmented with the rule
\[(\text{\sc Ax}_T) \ \ \ \ \  \displaystyle\frac{\phantom{aaaaaa}}{ \alpha }, \]
for $\alpha$ an axiom of $T$.
Given an axiomatically presented $T$, $\taitc_T$ is the Tait calculus enriched with this rule.
By convention we always allow cuts to be applied to axioms, and $\taitc^\rho_T$ is the restriction where cuts are applied either to axioms of $T$ {\em or} to formulas of rank less than $\rho$.

\subsection{Successor induction and comprehension}\label{SubsecSSOA}

As our `background theory' we will use Robinson's arithmetic $\sf Q$ \cite{HajekPudlak:1993:Metamathematics} (essentially, $\pa$ without induction).
Aside from the basic axioms of $\Robinson$, the following schemes will be useful in axiomatizing many theories of interest to us. Below, $\Gamma$ denotes a set of formulas.

\begin{description}

\item[$\Gamma\mbox{-}\compax$] $\exists X\forall x\ \big (x\in X\leftrightarrow \phi(x)\big )$, where $\phi\in\Gamma$ and $X$ is not free in $\phi$;

\item[${\bm \Delta}^0_1\mbox{-}\compax$] $\forall x \big (\pi(x)\leftrightarrow\sigma(x) \big )\rightarrow\exists X\forall x\ \big (x\in X\leftrightarrow \sigma(x)\big )$,
where $\sigma\in{\bm \Sigma}^0_1$, $\pi\in{\bm \Pi}^0_1$, and $X$ is not free in $\sigma$ or $\pi$;

\item[${\rm I}\Gamma$] $\phi({\tt 0})\wedge\forall x\, \big (\phi(x) \to\phi(x+ {\tt 1}) \big )\ \to\ \forall x \ \phi(x)$, where $\phi\in\Gamma$;

\item[${\rm Ind}$] ${\tt 0}\in X\wedge \forall x\ \big (x\in X\rightarrow x+{\tt 1}\in X \big )\ \to\ \forall x\, (x\in X).$

\end{description}
In addition, $\rm Exp$ is a formula stating that the exponential function is total.
With this, we may define the following theories:
\begin{center}
\begin{tabular}{rll}
$\eca$&$\equiv$ &$\Robinson + {\rm Exp} + {\rm Ind}+{\bm\Delta}^0_1$-$\compax$;\\
$\rca$&$\equiv$ &$\Robinson + {\rm I}{\bm\Sigma}^0_1+{\bm\Delta}^0_1$-$\compax$;\\
$\aca $&$\equiv$ &$\Robinson + {\rm Ind}+{\bm\Sigma}^0_1$-$\compax$;\\
$\pica $&$\equiv$ &$\Robinson + {\rm Ind}+{\bm\Pi}^1_1$-$\compax$.\\
\end{tabular}
\end{center}
We use a Roman font for axioms or schemes and a sans-serif font for theories.
Later we will make use of the fact that (in particular) $\aca$ is finitely axiomatizable \cite[Lemma
VIII.1.5]{Simpson:2009:SubsystemsOfSecondOrderArithmetic}.
We assume that all pseudo-terms are defined so that $\eca$ proves that they are total functions on their intended domain.

Next, it will be useful to give a somewhat more economical (but equivalent) representation of $\pica$.

\begin{theorem}\label{TheoPicaSimp}
The theory $\pica$ is equivalent to $\Robinson + {\rm Ind}+({\Pi}^1_1/{\bm \Sigma}^0_2)\mbox{-}\compax$.
\end{theorem}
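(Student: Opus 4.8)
The plan is to show two inclusions: every axiom of $\pica$ is provable in $\Robinson + {\tt Ind} + ({\Pi}^1_1/{\bm\Sigma}^0_2)\mbox{-}\compax$, and conversely. The reverse direction is trivial, since every instance of $({\Pi}^1_1/{\bm\Sigma}^0_2)\mbox{-}\compax$ is a special case of ${\bm\Pi}^1_1\mbox{-}\compax$ (a formula of the form $\forall X_1\exists X_0\dots$ with a ${\bm\Sigma}^0_2$ matrix — or rather, reading the definition of ${\Pi}^1_n/\Gamma$ with $n=1$, a formula $\forall X_1\,\phi$ with $\phi\in{\bm\Sigma}^0_2$ — is certainly ${\bm\Pi}^1_1$), so the weaker comprehension scheme is derivable in $\pica$. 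The substance is the forward direction: given an arbitrary ${\bm\Pi}^1_1$ formula $\phi(x)$, I must produce, using only the restricted comprehension scheme together with ${\tt Ind}$ and $\Robinson$, a set $X$ with $\forall x\,(x\in X\leftrightarrow\phi(x))$.

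First I would put $\phi(x)$ into a normal form $\forall Y\,\psi(x,Y)$ with $\psi$ arithmetical, and then push the arithmetical part down: using the standard prenexing and the pairing/coding machinery available in $\eca$ (which is contained in the target theory), one shows that every arithmetical formula is provably equivalent to a ${\bm\Sigma}^0_2$ formula relative to a single universal set quantifier — more precisely, that $\forall Y\,\psi(x,Y)$ is equivalent to a formula $\forall Y\,\theta(x,Y)$ with $\theta\in{\bm\Sigma}^0_2$. The key tool here is arithmetical comprehension's being \emph{unnecessary}: instead of forming the set of pairs satisfying the arithmetical matrix, one absorbs the arithmetical quantifier block into the matrix by contracting quantifiers of the same type (using pairing) and, crucially, by using the universal set variable $Y$ itself as a bookkeeping device — coding a putative Skolem/witness function for the inner existential arithmetical quantifiers into $Y$, or equivalently rewriting $\forall n\,\exists m\,\chi$ inside the scope of $\forall Y$ by quantifying over all $Y$ that might encode the relevant finite approximations. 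This reduces the problem to recognizing that $x\in X\leftrightarrow\forall Y\,\theta(x,Y)$ with $\theta\in{\bm\Sigma}^0_2$ is exactly an instance of $({\Pi}^1_1/{\bm\Sigma}^0_2)\mbox{-}\compax$ (one universal set quantifier over a ${\bm\Sigma}^0_2$ matrix, with an initial first-order quantifier of either kind permitted by the definition).

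The main obstacle — and where the argument needs care — is exactly this normal-form step: showing in the weak base theory $\eca$ (no $\Sigma^0_1$ induction, only ${\tt Ind}$ for sets and $\Delta^0_0$-comprehension) that an arbitrary arithmetical matrix under a universal second-order quantifier collapses to ${\bm\Sigma}^0_2$. One cannot simply invoke arithmetical comprehension to form auxiliary sets. The fix is to observe that adding a universal set quantifier lets you \emph{simulate} the missing comprehension: for a $\Pi^0_{k}$ or $\Sigma^0_k$ matrix, quantify additionally over a set $Z$ intended to be (the characteristic set of) the relevant arithmetical predicate, assert via a ${\bm\Pi}^0_1$ condition on $Z$ that it is correct, and conditionalize; pairing $Z$ with the original $Y$ keeps us at one universal set quantifier. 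Iterating this finitely many times — once per arithmetical quantifier alternation, with the number of iterations a fixed number depending only on $\phi$, so that no induction on formula complexity inside the theory is needed — brings the matrix down to ${\bm\Sigma}^0_2$ (we can afford a $\forall\exists$ block because that is what ${\bm\Sigma}^0_2$ gives us for free). I would also double-check the degenerate low-complexity cases ($\phi$ already ${\bm\Sigma}^0_2$ or simpler, where one uses $\Delta^0_0$- or the trivial comprehension directly) and confirm that the promised leading first-order quantifier in the definition of ${\Pi}^1_1/{\bm\Sigma}^0_2$ is genuinely harmless, i.e. that $\exists x\,\forall Y\,\theta$ and $\forall x\,\forall Y\,\theta$ type prefixes are still covered, so that the scheme as literally stated suffices.
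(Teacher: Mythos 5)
Your overall plan (reverse inclusion trivial; forward inclusion via a normal form placing each ${\bm\Pi}^1_1$ formula under a single universal set quantifier with a ${\bm\Sigma}^0_2$ matrix) is the same as the paper's, but the way you propose to carry out the normal-form step has a genuine gap. You insist on establishing the collapse ``in the weak base theory $\eca$'' and claim that the universal set quantifier lets you \emph{simulate} the missing arithmetical comprehension by additionally quantifying over an auxiliary set $Z$, asserting its correctness, and conditionalizing. This cannot work as described: conditionalizing on ``$Z$ is correct'' only helps in the direction where a correct $Z$ is handed to you. In the converse direction --- deducing $\forall Y\,\psi(x,Y)$ from $\forall Y\,\forall Z\,\big(\mbox{correct}(Z)\to\dots\big)$ --- you must know that for every $Y$ a correct $Z$ (a truth set, or a total Skolem function, for the arithmetical matrix relative to $Y$) actually \emph{exists}; that existence is precisely an instance of arithmetical comprehension, a universal quantifier does not conjure the witness, and $\eca$ does not prove it. (A smaller slip: correctness of such a $Z$ is a ${\bm\Pi}^0_2$ condition, not ${\bm\Pi}^0_1$; this happens to be harmless, since its negation is still ${\bm\Sigma}^0_2$ and the matrix stays in the right class.)

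The gap is easy to close, and closing it shows the $\eca$ worry is a red herring: the scheme $({\Pi}^1_1/{\bm\Sigma}^0_2)\mbox{-}\compax$ already yields ${\bm\Sigma}^0_1\mbox{-}\compax$ (prefix a vacuous $\forall X$ and instantiate it), hence all of $\aca$ as defined in the paper; and $\pica$ contains $\aca$ as well. So the normal-form equivalence only ever needs to be proved over $\aca$, not $\eca$, and there it is the standard Kleene normal form theorem: the paper simply cites Simpson's Lemma V.1.4 to write a ${\bm\Pi}^1_1$ formula as $\forall f\,\phi(f)$ with $\phi\in{\bm\Sigma}^0_1$, and then trades the function quantifier for a set quantifier at the cost of the ${\bm\Sigma}^0_2$ disjunct $\Neg{\tt fun}(F)$. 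Your truth-set/Skolem-coding argument, once it is allowed to use the $\aca$ that is in fact available, amounts to a re-proof of that cited lemma rather than a way around it.
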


\begin{proof}[sketch]
In \cite[Lemma V.1.4]{Simpson:2009:SubsystemsOfSecondOrderArithmetic}, it is proven that any ${\bm \Pi}^1_1$ formula is equivalent to one of the form $\forall f \colon\mathbb N\to\mathbb N \ \phi (f),$ where $\phi\in {\bm \Sigma} ^0_1$. If ${\rm fun}(F)\in{\Pi}^0_2(F)$ is a formula stating that $F$ is the graph of a function, this is in turn equivalent to some formula $\forall F \ \big ( {\rm fun}(F) \to \phi'(F) \big )\in {\Pi}^1_1/{\bm \Sigma}^0_2,$ where $\phi'$ is obtained from $\phi$ by replacing each instance of $f(t)$ by a pseudo-term for the unique $y$ such that $\langle t,y\rangle \in F$.
\end{proof}

\subsection{Transfinite recursion and bar induction}

We mention two further theories that will appear later and require a more elaborate setup. We may represent well-orders in second-order arithmetic as pairs of sets $\Lambda=\langle |\Lambda|,<_\Lambda\rangle$, and write $\lambda < \Lambda$ instead of $\lambda \in |\Lambda|$. Then, we define
\begin{align*}
{\rm Prog}_\phi (\Lambda)&=\forall \lambda \ \Big( \big ( \forall \xi {<_\Lambda} \lambda \ \phi(\xi) \big) \to \phi(\lambda) \Big )\\
\ti_\phi(\Lambda)&=\forall \lambda {<} \Lambda \ \big (
{\rm Prog}_\phi(\Lambda)
 \rightarrow \forall \lambda  {<} \Lambda \ \phi(\lambda)\big) \phantom{\Big(}\\
\wf(\Lambda)&=\forall X \ \ \ti_{\lambda\in X}(\Lambda) \phantom{\Big(}\\
\wo(\Lambda)&={\rm LO}(\Lambda)\wedge \wf(\Lambda), \phantom{\Big(}
\end{align*}
where ${\rm LO}(\Lambda)$ is a formula expressing that $\Lambda$ is a linear order.

Given a set $X$ whose elements we will regard as ordered pairs $\langle\lambda,n\rangle$, let $X_{\lambda}$ be the set of all $n$ with $\langle \lambda,n\rangle\in X$, and $X_{<_\Lambda \lambda}$ be the set of all $\langle \eta,n\rangle$ with $\eta<_\Lambda \lambda$. With this, we define the {\em transfinite recursion} scheme by
\[{\rm TR}_\phi(X,\Lambda)= \forall \lambda {<} \Lambda \ \forall n \ \big (n\in X_\lambda\leftrightarrow \phi(n,X_{<_\Lambda\lambda}) \big ).\]
For a set of formulas $\Gamma$ we define the schemes
\begin{center}
\begin{tabular}{rll}
$\Gamma$-$\rm TR$&$\equiv$&$\Big \{ \forall \Lambda \big ( {\rm WO}(\Lambda)\rightarrow \exists X \ {\rm TR}_\phi(X,\Lambda) \big ): \phi\in \Gamma \Big\}$;\\
$\Gamma\text{-}{\rm TI} $&$\equiv$&$\Big \{ \forall \Lambda \big ( {\rm WO}(\Lambda)\rightarrow {\rm TI}_\phi(\Lambda) \big ) :\phi\in \Gamma \Big\}$,
\end{tabular}
\end{center}
and the theories $\atr = \rca + {\bm \Pi} ^0_\omega$-$\rm TR$ and $\Gamma\text{-}\piti = \rca + \Gamma\text{-}{\rm TI} $.

Often $\atr$ and $\Gamma\text{-}\piti $ are defined over $\aca$ rather than $\rca$, but the two definitions are equivalent.
We will make use of the following relations between theories.
\begin{proposition}\label{propRelBetTheos}\
\begin{enumerate}

\item $\aca \equiv {\bm \Sigma}^0_1\text{-}\piti \subseteq \atr $;

\item ${\bm \Pi}^1_1\text{-}\piti\not\subseteq \atr \subsetneq{\bm \Sigma}^1_1\text{-}\piti$.

\end{enumerate}

\end{proposition}

The left-hand of the first item is shown in \cite{hirst_1999} and the right is straightforward, as arithmetical comprehension follows from transfinite recursion applied to the well-order $1$.
The second is proven in \cite{Simpson82}; to be precise, ${\bm \Pi}^1_1\text{-}\piti\equiv{\bm \Sigma}^1_1$-${\sf DC}_0$, a theory known to be incomparable with $\atr$.

\section{Formalized $\omega$-logic}\label{SecForm}

In this section we will give the necessary definitions in order to reason about $\omega$-logic within second-order arithmetic, and introduce the provability operator $\ProofP{}{}$ based on $\omega$-proofs.

\subsection{Formalized deduction}\label{SecFormal}

For our purposes, a (formalized) {\em theory} $T$ is a set of formulas defined by a ${\bm \Delta}^0_0$ formula ${\rm Ax}_T(x)$, representing the axioms of $T$.
For $\cpar\leq\omega$, fix $\RulePred^\cpar(x,y)\in {\Delta}^0_0 $ such that it is provable in $\eca$ that if $\RulePred^\cpar(x,y)$ holds, then $x$ codes a sequence of sequents $\langle \vecdelta_i\rangle_{i < n}$ and $y$ codes a sequent $\vecgamma$, and such that $\displaystyle\frac{\langle \vecdelta_i\rangle_{i < n}}{\vecgamma}$ is an instance of a rule of $\taitc^\cpar$ if and only if $\RulePred^\cpar(\langle \vecdelta_i\rangle_{i < n},\vecgamma)$ holds.
Similarly, given a theory $T$, $\RulePred^\cpar_T(x,y)$ expresses that $x,y$ represent an instance of a rule of $\taitc^\cpar_T$; recall that cuts are always allowed to be applied to axioms of $T$. 
We assume that the Tait calculus is formalized in such a way that the scheme stating that $\vecgamma,\alpha$ is derivable whenever $\alpha$ is a true atomic sentence is provable in $\eca$; note that $\Sigma^0_1$-completeness is provable in $\ea$ for standard calculi \cite{HajekPudlak:1993:Metamathematics}.

We also need to formalize the infinitary Tait calculus with the $\omega$-rule, which we denote by $\omega$-$\taitc$. Recall that this rule has infinitely many premises, and the following form:
\[\dfrac{\langle\vecgamma,\phi(\bar n):n\in\omega\rangle}{\vecgamma,\forall x\ \phi(x)}.\]
We can formalize this using the following expression:
\begin{align*}
\OmegaRule(P,\vecgamma)&\equiv \exists \phi\in\vecgamma \ \exists x , \psi < \phi \ \Big ( \phi=\forall x \psi(x) \wedge \forall z \big (\vecgamma,\psi(\dot z)\in P \big ) \Big ). \\
\end{align*}
Here, $P$ is a set-variable and $\psi < \phi$ refers to the standard order on natural numbers (recalling that we identify formulas with their G\"odel codes). The formula $\OmegaRule(P,\vecgamma)$ states that $\vecgamma$ follows by applying one $\omega$-rule to elements of $P$, and will be used in our formalizations of $\omega$-logic.

\subsection{Theories with oracles}\label{SubsOr}

In order to deal with free second-order variables, we will enrich theories with oracles. As we have mentioned previously, we will use countably many constants ${\bm C}=\langle C_i\rangle_{i\in \mathbb N}$ in order to `feed' information about any tuple of sets of numbers into $T$. The $C_i$'s are assumed to be disjoint from the second-order variables.

To be precise, we first encode finite sequences of sets in a natural way: for example, we may encode $\langle A_i\rangle_{i<n}$ by
\[{\bm A}= \big \{\langle 0,n\rangle \big \} \cup \big \{ \langle k,i+1\rangle :k\in A_i \wedge i<n\big \}.\]
The pair $\langle 0,n\rangle$ is included in order to know the length of the sequence, in case that e.g. $A_{n-1}=\varnothing$. As with tuples of natural numbers, we write $n=|{\bm A}|$.

Then, given a theory $T$ and a set-tuple ${\bm A}$, define $T|{\bm A}$ to be the extension of $T$ with the new axioms
\begin{center}
\begin{tabular}{llr}
($\mbox{\sc o}_\in$)&$\displaystyle\frac{\phantom{blablablablab}}{\vecgamma,\overline k\in C_i}$ &\ for $k\in A_i$ and $i<|{\bm A}|$ \\\\
($\mbox{\sc o}_{\not \in}$)&$\displaystyle\frac{\phantom{blablablablab}}{\vecgamma,\overline k\not \in C_i}$ &\  for $k\not \in A_i$ and $i<|{\bm A}|$.
\end{tabular}
\end{center}
It should be clear that given a formalized axiomatization ${\rm Ax}_T $ for $T$ we can define a new ${\bm \Delta}^0_0$ axiomatization ${\rm Ax}_{T|{\bm A}} $ for $T|{\bm A}$.

\subsection{Formalizing proof trees}

In \cite{Arai1998,GirardProofTheory}, derivability in $\omega$-logic is formalized by the existence of (typically infinite) derivation trees.
These are represented in second-order arithmetic using $\omega$-trees.

\begin{definition}\label{deOmTrees}
Let $\mathbb N^{<\omega}$ denote the set of all finite sequences of natural numbers.
If ${\bm s},\bm t\in \mathbb N^{<\omega}$, define $\bm s \peq \bm t$ if $\bm s$ is an initial segment of $\bm t$, and $\mathop\downarrow \bm s=\{\bm t\in S:\bm t\peq \bm s\}$. An {\em $\omega$-tree} is a set $S\subseteq \mathbb N^{<\omega}$ such that $\mathop\downarrow S=S$. A {\em labelled $\omega$-tree} is a pair $\langle S,L\rangle $ such that $S$ is an $\omega$-tree and $L\colon S\to\mathbb N$.
We say that $S$ is well-founded if $\seq$ is well-founded, i.e.~if there are no infinite strictly {\em increasing} sequences.
\end{definition}

\begin{definition}
A {\em preproof (for $T$) of cut-rank at most $\rho\leq\omega$} is a labeled $\omega$-tree $\langle S,L\rangle$ such that for every $\bm s\in S$, $L(\bm s)$ is a sequent, and  there is an instance $\displaystyle\frac{\langle \vecdelta_i\rangle_{i < \xi }}{\vecgamma}$ of a rule of $\omega$-$\taitc^{\rho}_T$ with $\xi\leq \omega$ such that $L(\bm s)=\vecgamma$ and for all $i\in\mathbb N$, $\bm s\ast i\in S$ if and only if $i<\xi$, in which case $L(\bm s\ast i)=\vecdelta_i$.

Let ${\rm Preproof}^\rho_T(S,L)$ be a $\Pi^0_1(S,L)$ formula stating that $\langle S,L\rangle$ is a preproof for $T$ of cut-rank at most $\rho$.
If $S$ is (upwards) well-founded, we will say that $\langle S,L\rangle$ is an {\em $\omega$-proof}.
If $S$ is finite, we say that $\langle S,L\rangle$ is a {\em finitary proof}.
\end{definition}

The formula ${\rm Preproof}^\rho_T(S,L)$ would make use of the formulas $\RulePred$ and (a mild variant of) $\OmegaRule$ defined in \S \ref{SecFormal}; this is developed in much more detail, for example, in \cite{GirardProofTheory}.

\begin{definition}
Given $\rho\leq\omega$, define a formula $\ProofP \rho T\vecgamma$ by
\[\exists S \ \exists L \ \Big( {\rm WF}(\langle S,{\succcurlyeq}\rangle )\wedge {\rm Preproof}^\rho_T(S,L)\wedge L(\langle\rangle)=\vecgamma \Big).\]
We write $\ProofPX \rho T{\bm X}\vecgamma (\dot {\bm X})$ instead of $\ProofP \rho {T|{\bm X}}\vecgamma$.
\end{definition}

We may omit the parameter $\rho$ and write $\ProofP {} {T|{\bm X}}\vecgamma$ when $\rho=\omega$.
For the sake of uniformity, we will also use proof trees to formalize deduction in standard theories.
Let $\vecgamma$ be a first order variable, and $\nc_T \vecgamma$ be a ${\bm \Sigma}^0_1$ formula stating that there is a finitary proof of $\vecgamma$, coded as a single natural number.
We call $\nc_T \vecgamma$ the {\em provability predicate for $T$.}

The following is immediate from the definition:

\begin{lemma}\label{LemmPMonotone}
Given $\rho\leq\sigma\leq\omega$, it is provable in $\eca$ that for all tuples of sets $\bm A$, $\ProofPX \rho TX\vecgamma (\dot{\bm A})$ implies $\ProofPX \sigma TX\vecgamma(\dot{\bm A})$.
\end{lemma}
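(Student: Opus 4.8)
The statement to prove is Lemma~\ref{LemmPMonotone}: for $\rho\leq\sigma\leq\omega$, $\eca$ proves that $\ProofPX{\rho}{T}{X}\vecgamma$ implies $\ProofPX{\sigma}{T}{X}\vecgamma$. The plan is to observe that the \emph{same} witness works: any $\omega$-proof of cut-rank at most $\rho$ is, in particular, an $\omega$-proof of cut-rank at most $\sigma$. So I would unfold the definition of $\ProofPX{\rho}{T}{X}\vecgamma$, fix witnesses $S,L$ satisfying ${\tt WF}(\langle S,{\succcurlyeq}\rangle)$, ${\tt PreProof}^{\rho}_{T|X}(S,L)$, and $L(\langle\rangle)=\vecgamma$, and then argue that these very same $S,L$ witness $\ProofPX{\sigma}{T}{X}\vecgamma$. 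The only clause that depends on the cut-rank parameter is ${\tt PreProof}^{\rho}_{T|X}(S,L)$, so the whole content of the lemma reduces to showing $\eca\vdash {\tt PreProof}^{\rho}_{T|X}(S,L)\to{\tt PreProof}^{\sigma}_{T|X}(S,L)$.

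For that implication I would appeal to the corresponding syntactic fact about the rule predicate: an instance of a rule of $\omega$-$\taitc^{\rho}$ is an instance of a rule of $\omega$-$\taitc^{\sigma}$ whenever $\rho\leq\sigma$, since raising the cut-rank bound only \emph{adds} permissible cut instances and leaves all other rules (and the $\omega$-rule, and the oracle rules of $T|X$) untouched. Concretely, at each node $\bm s\in S$ the formula ${\tt PreProof}^{\rho}_{T|X}(S,L)$ asserts that the local inference is either an instance of a $\taitc^{\rho}$-rule (captured via $\RulePred^{\rho}$), or an $\omega$-rule instance, or an oracle axiom, or $\bm s$ is a leaf with $T|X\vdash L(\bm s)$; and $\eca$ proves $\RulePred^{\rho}(x,y)\to\RulePred^{\sigma}(x,y)$ because the $\Delta^0_0$ definition of $\RulePred^{\cpar}$ is, by construction, monotone in $\cpar$ (the side condition on a cut is ``rank of the cut formula $<\cpar$'', which is preserved under increasing $\cpar$). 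This last containment is itself ``immediate from the definition'' in the sense of the standard formalization, so I would just cite it as a property of the fixed $\Delta^0_0$ formulas chosen in \S\ref{SecFormal}, rather than re-deriving it.

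The verification is uniform in the node $\bm s$, so no induction along $S$ is needed: ${\tt PreProof}^{\rho}_{T|X}(S,L)$ is a $\Pi^0_1$ statement of the form $\forall \bm s\,(\dots)$, and $\eca$ proves the matrix implication pointwise; hence it proves the universally quantified implication, and the $\Pi^0_1$ formula ${\tt PreProof}^{\sigma}_{T|X}(S,L)$ follows. Combining this with the unchanged clauses ${\tt WF}(\langle S,{\succcurlyeq}\rangle)$ and $L(\langle\rangle)=\vecgamma$, and re-applying existential quantification over $S$ and $L$, gives $\ProofPX{\sigma}{T}{X}\vecgamma$ in $\eca$, as required.

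\textbf{Main obstacle.} There is essentially no mathematical obstacle; the only thing to be careful about is the bookkeeping in the underlying formalization, i.e.\ that the $\Delta^0_0$ rule-predicates $\RulePred^{\cpar}$ were set up so that $\cpar\mapsto\RulePred^{\cpar}(x,y)$ is provably monotone in $\eca$, and that ${\tt PreProof}^{\rho}_{T|X}$ refers to $\RulePred^{\rho}$ only through this predicate. Since \S\ref{SecFormal} explicitly fixes such a predicate (with the cut side-condition being a rank comparison), this is exactly the sort of detail the paper relegates to ``developed in much more detail in \cite{GirardProofTheory},'' so the proof can legitimately remain a one-line ``immediate from the definitions'' argument.
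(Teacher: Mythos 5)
Your proof is correct and matches the paper's treatment: the paper simply declares Lemma~\ref{LemmPMonotone} ``immediate from the definition,'' and your argument—reusing the same witness $\langle S,L\rangle$ and noting that raising the cut-rank bound only enlarges the set of permissible rule instances, so ${\tt PreProof}^\rho_{T|X}(S,L)$ implies ${\tt PreProof}^\sigma_{T|X}(S,L)$ pointwise—is exactly the spelled-out version of that observation.
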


The notion of provability $\ProofP {}{}$ gives rise to a natural reflection scheme.

\begin{definition} \label{DefRFNProof}
Given a theory $T$, $\rho\leq\omega$, and a set of formulas $\Gamma$, we define a schema
\begin{align*}
\PRFN\tre\rho  {\Gamma}[T] &\equiv\forall {{\bm X}} \, \forall {{\bm z}}\ \Big(\, \ProofPX \rho T {{\bm X}} \, \phi(\dot{{\bm z}},{\dot{\bm X}}) \to \phi({{\bm z}},{{\bm X}}) \Big ),
\end{align*}
where $\phi({\bm z},\bm X)\in \Gamma$ with all free variables among those shown.
\end{definition}

We may omit the parameter $\rho$ when $\rho=\omega$, as well as the parameter $T$ when $T$ is just the Tait calculus. This form of reflection gives an alternative axiomatization for transfinite induction, as shown by Arai \cite{Arai1998}.

\begin{theorem}\label{TheoArai}
$\rca + \PRFN\tre {}{{{\bm \Pi}^1_\omega}} \equiv  {{\bm \Pi}^1_\omega}\text{-}\piti.$
\end{theorem}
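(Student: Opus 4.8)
The plan is to establish the two inclusions separately, with the bulk of the work going into showing that $\rca + \PRFN\tre{}{{\bm\Pi}^1_\omega}$ proves each instance of full bar induction, $\forall\Lambda\big(\wo(\Lambda)\to\ti_\phi(\Lambda)\big)$ for arbitrary $\phi\in{\bm\Pi}^1_\omega$. For the easy direction, $\rca + {\bm\Pi}^1_\omega\text{-}{\tt BI} \vdash \PRFN\tre{}{{\bm\Pi}^1_\omega}$, I would argue semantically/inductively: given an $\omega$-proof $\langle S,L\rangle$ with $\wf(\langle S,\succcurlyeq\rangle)$ and $L(\langle\rangle)=\phi(\dot{\varOne},{\bm O})$, one shows by transfinite induction along $\succcurlyeq$ restricted to $S$ — which is a well-order instance available from bar induction once we note $\ti$ for an arbitrary ${\bm\Pi}^1_\omega$ predicate is exactly what ${\bm\Pi}^1_\omega\text{-}{\tt BI}$ grants — that every sequent labelling a node of $S$ is ``true'' (true under the standard interpretation, reading $O_i$ as $A_i$). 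The progressiveness step is a case analysis over the finitely many rule shapes of $\omega$-$\taitc$: the finitary Boolean and quantifier rules, the $\omega$-rule, the cut rule, the oracle rules, and the leaves, where leaves are handled because $T$ here is just the Tait calculus (so a leaf sequent is a true atomic axiom, provable in $\rca$). The soundness-of-one-step claims are exactly the usual local correctness lemmas for Tait logic, and the only subtlety is bounding the logical complexity of ``$\vecgamma$ is true'' uniformly in $\vecgamma$ — but this is a standard partial-truth-predicate construction, and since we are allowed the full scheme ${\bm\Pi}^1_\omega\text{-}{\tt BI}$ we may stratify by the rank of formulas occurring in the proof and induct using bar induction at each finite level, or appeal to the fact that $\rca$ proves the existence of a truth predicate for formulas of bounded rank.

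For the harder direction, $\rca + \PRFN\tre{}{{\bm\Pi}^1_\omega} \vdash {\bm\Pi}^1_\omega\text{-}{\tt BI}$, the strategy is the classical Sch\"utte-style ``cut-elimination produces a well-founded search tree'' argument, carried out in reverse. Fix $\phi\in{\bm\Pi}^1_\omega$, a well-order $\Lambda$ with $\wo(\Lambda)$, and assume ${\tt Prog}_\phi(\Lambda)$; the goal is $\forall\lambda\in|\Lambda|\,\phi(\lambda)$, i.e.\ $\phi(n)$ for an arbitrary fixed $n$ denoting an element of $|\Lambda|$. The idea is to build, provably in $\rca$, an $\omega$-proof of (a sequent equivalent to) $\phi(\dot n,{\bm O})$ whose underlying tree $S$ is well-founded \emph{because} $\Lambda$ is, and then apply $\PRFN\tre{}{{\bm\Pi}^1_\omega}$ to conclude $\phi(n,\ConOne)$. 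Concretely: progressiveness ${\tt Prog}_\phi(\Lambda)$ lets us, for each $\lambda$, derive $\phi(\bar\lambda)$ from $\{\phi(\bar\eta):\eta<_\Lambda\lambda\}$ by a finite Tait derivation (uniformly in $\lambda$, since ${\tt Prog}_\phi$ is a single formula), and one stitches these together into an infinitary derivation indexed by descent through $\Lambda$; the $\omega$-rule is used to handle the universal quantifier over $\eta<_\Lambda\lambda$ (ranging over all numbers, with the bounded ones among them discharged by an oracle-free Tait subproof of $\eta\not<_\Lambda\lambda$ or the inductive hypothesis). The resulting tree embeds into $\Lambda$ in an order-reversing way on the relevant branches, so $\wf(\langle S,\succcurlyeq\rangle)$ follows from $\wf(\Lambda)$ — and crucially this embedding and well-foundedness transfer are exactly the kind of $\Pi^1_1$/$\Pi^1_\omega$ reasoning that $\rca$ handles, since $\wf(\langle S,\succcurlyeq\rangle)$ as written is a $\Pi^1_1$ statement reducible to $\wf(\Lambda)$.

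The main obstacle, and the step I would spend the most care on, is the uniformity and formalizability of the proof-tree construction in the second direction: one must actually exhibit, inside $\rca$ (or even the weaker $\eca$ with the quoted $\Sigma^0_1$-completeness), a $\Delta^0_1$-definable set $S$ and labelling $L$ with ${\tt PreProof}_T(S,L)$ and $L(\langle\rangle)=\phi(\dot n,{\bm O})$, and verify the well-foundedness of $\langle S,\succcurlyeq\rangle$ from $\wo(\Lambda)$ by a single application of (arithmetical) transfinite induction or a direct $\Pi^1_1$ argument. This is delicate because the nodes of $S$ carry sequents of unbounded rank (the formula $\phi$ is an arbitrary ${\bm\Pi}^1_\omega$ formula, so its subformula-witnessing $\exists^0,\exists^1$ instantiations in the derivation need global book-keeping) and because one must ensure the embedding into $\Lambda$ is genuinely order-preserving in the reverse direction — a naive construction can produce spurious infinite branches through the finitary ``glue'' subderivations, so these must be bounded in length uniformly. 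Since this is precisely the content of Arai's proof in \cite{Arai1998}, I would structure the write-up as: (1) state the key technical lemma that ${\tt Prog}_\phi(\Lambda)$ yields a uniform finitary Tait derivation of $\phi(\bar\lambda)$ from its $<_\Lambda$-predecessors with a length bound; (2) assemble the global tree and prove ${\tt PreProof}_T$ in $\eca$; (3) prove $\wf(\langle S,\succcurlyeq\rangle)$ from $\wo(\Lambda)$; (4) apply $\PRFN\tre{}{{\bm\Pi}^1_\omega}$ — and cite \cite{Arai1998} for the detailed verification of (1)--(3), noting that the converse inclusion is the routine soundness argument sketched above.
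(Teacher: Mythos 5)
First, a point of comparison: the paper does not prove Theorem~\ref{TheoArai} at all --- it is quoted from Arai \cite{Arai1998} --- so there is no in-paper argument to measure yours against, and your outline (which itself defers to \cite{Arai1998} for the technical steps (1)--(3)) has to be judged as a reconstruction of Arai's proof. Your hard direction is the standard one and is fine in outline: from ${\tt Prog}_\phi(\Lambda)$ one builds, explicitly and uniformly, an $\omega$-derivation over the oracle for $\Lambda$ and the parameters of the formalized implication ``${\tt Prog}_\phi(\Lambda)\to\phi(\bar n)$'', whose tree is well-founded because $\Lambda$ is, and then applies $\PRFN\tre{}{{{\bm \Pi}^1_\omega}}$ to that \emph{implication} and detaches; your parenthetical ``a sequent equivalent to $\phi(\dot n,{\bm O})$'' is exactly where the $\Neg{\tt Prog}_\phi$ side formulas must live, since the oracle only supplies atomic facts and cannot make ${\tt Prog}_\phi$ available as an axiom. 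The delicate points you flag (uniform length bounds on the finitary glue, the order-reversing embedding into $\Lambda$) are indeed the content of Arai's verification.

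The genuine gap is in the direction you call easy. You propose to show by bar induction along the proof tree that every node's sequent is true, handling truth either by ``stratifying by rank'' or by ``the fact that $\rca$ proves the existence of a truth predicate for formulas of bounded rank.'' The latter is false: a truth \emph{set} for second-order sentences of even modest bounded rank (with set parameters) yields the corresponding comprehension scheme, e.g.\ ${\bm\Sigma}^1_1$-${\tt CA}$, far beyond $\rca$; what $\rca$ has is only a \emph{definable} partial satisfaction predicate for each fixed rank. Stratification does not rescue the argument either, because the theorem is stated for cut rank $\omega$: a single $\omega$-proof of the fixed formula $\phi$ may contain cut formulas of unboundedly high rank, so no fixed partial truth definition evaluates all nodes, and ``every node of $S$ is labelled by a true sequent'' is not a single formula to which an instance of ${\bm\Pi}^1_\omega\text{-}{\tt BI}$ can be applied. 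The standard repairs are either to formalize cut elimination first, or --- closer to the machinery this paper actually develops in \S\ref{SecMod} --- to route through countable coded $\omega$-models: ${\bm\Pi}^1_\omega\text{-}{\tt BI}$ is equivalent to $\omega$-model reflection (Theorem~\ref{TheoModRFN}), and soundness of a well-founded $\omega$-proof with respect to a coded $\omega$-model is a transfinite induction along the tree on a statement arithmetical in the model's satisfaction class, which evaluates formulas of all ranks at once; combining the two gives $\PRFN\tre{}{{{\bm \Pi}^1_\omega}}$. With that substitution your architecture is correct, but as written this step would fail.
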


Note the analogy with Kreisel and L\'evy's result; just as reflection is equivalent to induction, $\omega$-reflection is equivalent to transfinite induction. As we will see, different formulations of $\omega$-logic also give rise to certain forms of comprehension.

\section{Countable $\omega$-models and reflection}\label{SecMod}

Another notion of reflection can be defined using $\omega$-models. An $\omega$-model is a second-order model whose first-order part consists of the standard natural numbers with the usual arithmetical operations. Because this part of our model is fixed, we only need to specify the second-order part, which consists of a family of sets over which we interpret second-order quantifiers. Moreover, if this family is countable, we can represent it using a {\em single} set.
If $\bm M$ codes a sequence of sets, a satisfaction class on $\bm M$ is a set which obeys the usual recursive clauses of Tarski's truth definition, where each constant $C_n$ is interpreted as $ M_ n$. Let us give a precise definition:

\begin{definition}
Let $\bm M\subseteq \mathbb N$. A
\emph{satisfaction class on $\bm M$} is a set $\sat\subseteq {\bm \Pi}^1_\omega(\bm C)$ such that, for any terms $t,s$, $n\in \mathbb N$, and sentences $\phi,\psi,$
\[
\begin{array}{rll}
(t\circ s)\in \sat & \Rightarrow & \lb t\rb\circ \lb s\rb \ \ (\circ\in \{=,\not=\});\\
(t\circ C_n) \in \sat &\Rightarrow &\langle n,\lb t\rb\rangle \circ \bm M \ \   (\circ\in \{\in,\not\in\}) ;\\
(\phi\wedge \psi) \in \sat & \Rightarrow & \phi\in \sat
\mbox{ and }
\psi \in \sat;\\
(\phi\vee \psi) \in \sat & \Rightarrow & \phi\in \sat
\mbox{ or }
\psi \in \sat;\\
(\exists u\,\phi(u)) \in \sat & \Rightarrow &  \mbox{for some }n\in
\mathbb{N},\ \phi(\bar n)\in \sat;\\
(\forall u\,\phi(u)) \in \sat & \Rightarrow &  \mbox{for all }n\in
\mathbb{N},\ \phi(\bar n)\in \sat;\\
(\exists X\,\phi(X)) \in \sat & \Rightarrow &  \mbox{for some }n\in
\mathbb{N},\ \phi(C_n)\in \sat;\\
(\forall X\,\phi(X)) \in \sat & \Rightarrow &  \mbox{for all }n\in
\mathbb{N},\ \phi(C_n)\in \sat.
\end{array}
\]
Given a set of sentences $\Gamma\subseteq {\Pi}^1_\omega(\bm C)$ closed under subformulas and substitution by closed terms (including set-constants), if for every $\phi\in \Gamma $ we have that either $\phi\in \sat$ or $\Neg\phi\in \sat$, we will say that $\sat$ is a {\em $\Gamma$-satisfaction class.} If $\Gamma$ contains all formulas of rank $\rho\leq \omega$, we say that $S$ is a {\em satisfaction class of rank $\rho$.} A pair $\model=\langle |\model|,S_\model\rangle$, where $|\model|$ is a sequence of sets and $S_\model$ is a $\Gamma$-satisfaction class on $|\model|$ of rank $\rho$ is a {\em $\Gamma$-valued $\omega$-model of rank $\rho$.} If $\Gamma$ is the set of all sentences of ${\Pi}^1_\omega(\bm C)$, we say that $\model$ is a {\em full $\omega$-model.}
\end{definition}

Satisfaction classes are used to define truth in a model:

\begin{definition}
Given an $\omega$--model $\model$, we write $\model\models\phi$ if $\phi\in S_\model$. If $T$ is a theory, we say that $\model$ is an $\omega$-model of $T$ if, whenever $\phi$ is an axiom of $T$, it follows that $\model \models \phi$.
If $\bm A$ is an $a$-tuple of sets, we write $\ModelsPX\rho T{\bm A}\phi (\dot{\bm A})$ for the formula stating that, for every $\Gamma$-valued $\omega$-model $\model$ of rank at least $\rho$ of $T$ with $\phi\in\Gamma$ and $|\model|_{<a}={\bm A}$, $\model\models\phi$.
\end{definition}

As with $[{\sf P}]$, we write $\ModelsPX{} T{\bm A}\phi (\dot{\bm A})$ when $\rho=\omega$, and will adhere to the same convention for other notions of $\omega$-provability.
Since the first-order part of an $\omega$-model is just the natural numbers, it is easy to see that, for arithmetical sentences, truth in a model is equivalent to truth. This partially extends to ${\bm \Pi}^1_1$-sentences:

\begin{lemma}\label{LemModSound}
Let $T$ be any theory and $\rho \leq\omega$. Then, if $\phi(\bm z,\bm X)\in {\bm \Pi}^1_1$ with all free variables shown,
\[\eca\vdash \forall {\bm A} \ \forall \bm n \ \big (\phi(\bm n,{\bm A}) \rightarrow \ModelsPX\rho T{\bm A} \phi(\dot{\bm n},\dot {\bm A})\big ).\]
\end{lemma}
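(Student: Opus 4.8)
First I would recall that a $\Pi^1_1$-formula $\phi(\bm z,\bm X)$ has the shape $\forall Y\,\psi(Y,\bm z,\bm X)$ with $\psi$ arithmetical (no bound set-quantifiers, though $\psi$ may have free set variables), and reduce the lemma to the familiar soundness of satisfaction classes for arithmetical formulas. Concretely, I would prove by (external) induction on the complexity of an arithmetical formula $\theta(\bm z,\bm X)$ that $\eca$ proves the following: for every set $\bm M$, every satisfaction class $S$ on $\bm M$, every tuple of numbers $\bm m$, and every tuple of indices $\bm j$, if $\theta(\bar{\bm m},\bm C_{\bm j})\in S$ then $\theta(\bm m,\bm M_{\bm j})$ holds, where $\bm M_j$ is the $j$-th set coded by $\bm M$, i.e. the interpretation of $C_j$. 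The atomic cases are immediate from the clauses $(t\circ s)\in S\Rightarrow\lb t\rb\circ\lb s\rb$ and $(t\circ C_j)\in S\Rightarrow\langle j,\lb t\rb\rangle\circ\bm M$; the cases of $\wedge$ and $\vee$ follow from the corresponding clauses together with the induction hypothesis for the immediate subformulas; for $\theta\equiv\forall u\,\chi(u,\bm z,\bm X)$ the $\forall$-clause gives $\chi(\bar n,\bar{\bm m},\bm C_{\bm j})\in S$ for every $n$, and the induction hypothesis for $\chi$ (with $n$ put for $u$) then yields $\chi(n,\bm m,\bm M_{\bm j})$ for every $n$, i.e. $\theta(\bm m,\bm M_{\bm j})$; the case $\theta\equiv\exists u\,\chi$ is symmetric. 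Since the induction is carried out in the metatheory, each instance is just a finite unwinding of the satisfaction clauses and needs no comprehension or set-induction beyond what is available in $\eca$.

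With this claim in hand I would argue inside $\eca$. Fix an $a$-tuple $\bm A$ and a tuple $\bm n$ with $\phi(\bm n,\bm A)$, i.e. $\forall Y\,\psi(Y,\bm n,\bm A)$, and let $\model=\langle\bm M,S\rangle$ be any $\Gamma$-valued $\omega$-model of rank at least $\rho$ with $\phi\in\Gamma$ and $\bm M_{<a}=\bm A$; notice that the hypothesis that $\model$ be a model of $T$ plays no role. Since $\phi\in\Gamma$ and $\Gamma$ is closed under substitution, $\phi(\bar{\bm n},\bm C)\in\Gamma$, so by the definition of a $\Gamma$-satisfaction class either $\phi(\bar{\bm n},\bm C)\in S$ — which is precisely $\model\models\phi(\dot{\bm n},\bm C)$, as desired — or else $\Neg\phi(\bar{\bm n},\bm C)\equiv\exists Y\,\Neg\psi(Y,\bar{\bm n},\bm C)\in S$. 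In the second case the $\exists$-clause provides some $m$ with $\Neg\psi(C_m,\bar{\bm n},\bm C)\in S$; since $\Neg\psi$ is again arithmetical, the claim above (using $\bm M_{<a}=\bm A$ to identify the constants $\bm C$ with $\bm A$, and $C_m$ with $\bm M_m$) gives $\Neg\psi(\bm M_m,\bm n,\bm A)$, contradicting the instance $\psi(\bm M_m,\bm n,\bm A)$ of $\forall Y\,\psi(Y,\bm n,\bm A)$. Hence the first case must hold, and since $\model$ was arbitrary we obtain $\ModelsPX\rho T{\bm A}\phi(\dot{\bm n},\bm C)$.

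The only substantive content is the arithmetical soundness claim, which is thoroughly routine; the one point deserving care is the bookkeeping of free variables in its induction hypothesis, since exposing the extra first-order variable in the quantifier cases is exactly what lets those steps go through uniformly in $n$. Beyond that I do not anticipate a genuine obstacle.
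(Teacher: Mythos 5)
Your proof is correct and follows essentially the same route as the paper: an external induction on arithmetical formulas establishing the soundness of the satisfaction clauses, and then the single outer second-order quantifier handled via the $\Gamma$-completeness dichotomy ($\phi\in S$ or $\Neg\phi\in S$) together with the $\exists$-clause. The only cosmetic difference is that you run the arithmetical induction in the ``membership implies truth'' direction and derive a contradiction, whereas the paper states it as ``truth implies membership'' and argues by contraposition on the outer $\forall X$ --- these are dual formulations of the same routine argument.
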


\proof
First assume that $\phi$ is arithmetical, and let $\model$ be a model of $T$ of rank $\rho$. Then, an external induction using the definition of a satisfaction class shows that, if $\phi$ holds, then $\model\models\phi$. Otherwise, assume that $\phi=\forall X \ \psi(X)$ and $\model\not\models \forall X \psi (X) $, so that $\model \not \models \psi(C_k)$ for some $k$. But then, by the arithmetical case, $ \psi(C_k)$ fails, so that $\forall X \ \psi(X)$ fails.
\endproof

The first item of the following claim is immediate from observing that every model of rank $\sigma$ is already a model of any rank $\rho\leq\sigma$. The second is follows from \cite[Theorem VIII.1.13]{Simpson:2009:SubsystemsOfSecondOrderArithmetic}, which states that any set can be included in a full $\omega$-model of $\aca$.

\begin{lemma}\label{LemModPar}
Let $\phi (\bm z,\bm X)$ be an arbitrary formula with free variables among those shown and $\rho\leq\sigma\leq\omega$. Then,
\begin{enumerate}

\item \label{itModParECA} $\eca\vdash \forall {\bm X} \forall{\bm z} \ \big (\ModelsPX\rho T{\bm X} \phi(\dot{\bm z}, {\dot{\bm X}}) \rightarrow \ModelsPX\sigma T{\bm X}\phi( \dot{\bm z}, {\dot{\bm X}})\big );$

\item \label{itModParATR} $\atr \vdash \forall {\bm X} \forall{\bm z} \ \big (\ModelsPX\rho T{\bm X} \phi({\dot {\bm z}} , {\dot{\bm X}}) \leftrightarrow \ModelsPX\sigma T{\bm X}\phi(\dot{\bm z}, {\dot{\bm X}})\big ).$

\end{enumerate}
\end{lemma}

We may use $\omega$-models to define a notion of reflection $\PRFN\mo\rho {\Gamma}[T]$ analogously to Definition \ref{DefRFNProof}. The following is proven by J\"ager and Strahm \cite{Jager1999}, and is a refinement of results of Friedman \cite{Friedman75} and Simpson \cite{Simpson82}:

\begin{theorem}\label{TheoModRFN}
$\aca+ \PRFN\mo 0 {{\bm \Sigma}^1_{1+n}}[\aca]  \equiv {{\bm \Pi}^1_n}\text{-}\piti.$
\end{theorem}

\begin{remark}
In the literature, $\omega$-model reflection is often presented as {\em `If $\phi$ is true, then $\phi$ is satisfiable in an $\omega$-model'.} We have presented it dually as {\em `If $\phi$ holds in every $\omega$-model, then $\phi$ is true'.} The two schemes are clearly equivalent, but we prefer the latter for its symmetry with the other notions of reflection we consider. Note, however, that we must replace $\phi$ by ${\sim}\phi$ to pass from one to the other, and thus Theorem \ref{TheoModRFN} is stated with ${\bm \Sigma}^1_{n+1}$ in place of ${\bm \Pi}^1_{n+1}$ as in \cite{Jager1999}.
\end{remark}

\section{Inductive definitions of $\omega$-logic}\label{SecFix}

We may also formalize {\em `provable in $\omega$-logic'} in second-order arithmetic using a least fixed point construction. To this end, let us review how such fixed points may be treated in this framework.

\subsection{Inductive definitions}\label{SecIndDef}

Below, recall that we are working in a language without negation for non-atomic formulas.

\begin{definition}
Let $\phi$ be any formula and $X$ a set-variable. We say {\em $\phi$ is positive on $X$} if $\phi$ contains no occurrences of $t\not \in X$.
\end{definition}

A positive formula $\phi$ induces a map $F=F_\phi\colon 2^\mathbb N\to 2^\mathbb N$, which is monotone in the sense that $X\subseteq Y$ implies that $F (X)\subseteq F (Y)$. It is well-known that any such operator has a least fixed point.

\begin{definition}
Given a formula $\phi(n,X)$, we define the abbreviations
\begin{align*}
{\rm Closed}_\phi (X)&\equiv \forall n \ \big (\phi(n,X)\rightarrow n\in X \big )\\
\big (X=\mu X.\phi\big) &\equiv {\rm Closed}_\phi(X)\wedge \forall Y\big ({\rm Closed}_\phi(Y)\rightarrow X\subseteq Y\big ).
\end{align*}
\end{definition}

It is readily checked that $n\in \mu X.\phi $ if and only if $\phi(n,\mu X.\phi)$ holds. Such fixed points can be constructed `from below' using transfinite iterations of $F$: if we define $F^0(X)=X$, $F^{\xi+1}(X)=F (F^\xi(X))$ and $F^\xi(X)=\bigcup _{\zeta<\xi}F^\zeta(X)$, then by cardinality considerations one can see that
\begin{equation}\label{EqFromBelow}
\mu X. \phi=F^{\omega_1}(\varnothing).
\end{equation}
On the other hand, we may define $\mu X.\phi$ `from above' as the intersection of all sets $Y$ such that ${\rm Closed}(Y)$ holds. The latter definition is available in $\pica$, as is well-known (see e.g. \cite{IteratedInductive}), and thus we see that:

\begin{lemma}\label{LemmExistsFix}
Given $\phi(X)\in {\bm \Pi}^0_\omega$ which is positive on $X$, it is provable in $\pica$ that $\exists Y \ \big (Y=\mu X.\phi\big ).$
\end{lemma}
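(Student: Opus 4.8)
The plan is to construct $\mu X.\phi$ "from above" as the intersection of all $\phi$-closed sets, and to verify that this intersection exists and has the required properties, all within $\pica$. First I would note that the defining formula of the intersection, namely
\[
\psi(n)\equiv\forall Y\ \big({\tt Closed}_\phi(Y)\to n\in Y\big),
\]
is, up to the classical normal form, a ${\bm\Pi}^1_1$ formula: since $\phi\in{\bm\Pi}^0_\omega$ is arithmetical, ${\tt Closed}_\phi(Y)$ is arithmetical in $Y$, so $\psi(n)$ is of the form $\forall Y\,(\text{arithmetical})$, hence ${\bm\Pi}^1_1$. By ${\bm\Pi}^1_1$-$\compax$ (available in $\pica$) there is a set $Z$ with $\forall n\,(n\in Z\leftrightarrow\psi(n))$; this $Z$ is the candidate for $\mu X.\phi$.

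Next I would check the two conjuncts of $\big(Z=\mu X.\phi\big)$. For minimality, if ${\tt Closed}_\phi(Y)$ holds then for every $n\in Z$ we have $\psi(n)$, which in particular instantiated at $Y$ gives $n\in Y$; hence $Z\subseteq Y$. For closedness, I need ${\tt Closed}_\phi(Z)$, i.e. $\phi(n,Z)\to n\in Z$. Assume $\phi(n,Z)$; to conclude $n\in Z$ I must show $n\in Y$ for every $\phi$-closed $Y$. Fix such a $Y$. Since $Z\subseteq Y$ (just proved) and $\phi$ is positive on $X$, monotonicity of the induced operator $F_\phi$ gives $\phi(n,Z)\to\phi(n,Y)$; and ${\tt Closed}_\phi(Y)$ then yields $n\in Y$. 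As $Y$ was arbitrary, $n\in Z$, establishing ${\tt Closed}_\phi(Z)$. Together these give $\big(Z=\mu X.\phi\big)$, so $\exists Y\,(Y=\mu X.\phi)$ as claimed. Finally I would remark that $Z$ is nonempty as a witness is not needed — $\emptyset$ always works if no closed set exists — but in fact the whole argument does not require any existence of closed sets a priori, since ${\tt Closed}_\phi(\mathbb N)$ holds trivially (every formula positive on $X$ satisfies $\phi(n,\mathbb N)\to n\in\mathbb N$), so the family of $\phi$-closed sets is nonempty.

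The only genuinely delicate point is the monotonicity step: one must verify, provably in $\pica$ (indeed already in $\eca$), that positivity of $\phi$ on $X$ yields the implication $X\subseteq Y\to(\phi(n,X)\to\phi(n,Y))$. This is a routine induction on the structure of $\phi$, using that in a negation-free presentation the only way $X$ occurs is via subformulas $t\in X$, which behave monotonically under $X\subseteq Y$; there is no obstacle, but it is the one place where the syntactic hypothesis "positive on $X$" is actually used. Everything else is a direct unwinding of the definitions of ${\tt Closed}_\phi$ and $\mu X.\phi$ together with a single application of ${\bm\Pi}^1_1$-comprehension. I would keep the write-up to a couple of sentences, citing the standard treatment of inductive definitions in $\pica$ (e.g. \cite{IteratedInductive}) for the monotonicity lemma.
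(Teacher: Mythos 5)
Your proposal is correct and is exactly the argument the paper has in mind: Lemma \ref{LemmExistsFix} is obtained by defining $\mu X.\phi$ ``from above'' as the intersection of all $\phi$-closed sets via ${\bm\Pi}^1_1$-comprehension, with closedness of the intersection following from monotonicity (an external induction using positivity); the paper simply cites this as well-known (e.g.\ \cite{IteratedInductive}) rather than spelling out the details you supply.
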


In particular, the rules of $\omega$-logic give rise to a positive operator, and a theorem of $\omega$-logic is any element of its least fixed point. Below, we develop this idea to give alternative formalizations of $\omega$-logic.

\subsection{The recursive formalization of $\omega$-logic}

We may use \eqref{EqFromBelow} to formalize `$\phi$ is a theorem of $\omega$-logic', as in \cite{Cordon2017,FernandezJoosten:2013:OmegaRuleInterpretationGLP}. There, provability along a countable well-order $\Lambda$ is modeled using an `iterated provability class' $P$, defined by arithmetical transfinite recursion as follows:

\begin{definition}\label{definition:OracleRuleWithSecondOrderVariable}
Let $\Lambda$ be a second-order variable that will be used to denote a well-order and $T$ be a formal theory. Define ${\rm Iter}_T(\phi,P)$ to be the formula
\[\nc_T\phi\vee \exists \psi \ \big ( \OmegaRule(P,\psi)\wedge \nc_T(\psi\to \phi) \big ).\]
Then, define
\begin{align*}
[\Lambda]_T\phi&\equiv \forall P \big( {\rm TR}_{{\rm Iter}_T}(P,\Lambda)\rightarrow \exists\lambda {<} \Lambda \ (\phi\in P_\lambda)\big);\\
\RecP {}T\phi&\equiv\exists\Lambda \ \big ( \wo(\Lambda)\wedge [\Lambda]_T\phi\big).
\end{align*}
As before, write $\RecPX{}T{\bm A}\phi (\dot{\bm A})$ instead of $\RecP{}{T|{\bm A}}\phi$, and for a set of formulas $\Gamma$, define $\PRFN\ite{ }\Gamma[T]$ analogously to Definition \ref{DefRFNProof}.
\end{definition}
This form of reflection gives rise to an axiomatization of $\atr$ \cite{Cordon2017}:

\begin{theorem}\label{TheoATR}
Let $ T$ be a c.e.~theory such that $\eca\subseteq T$ and $\atr$ proves that any set
$X$ can be included in a full $\omega$-model for $T$. Let $\Phi$ be any set of formulas such that $\{{\tt 0=1}\}\subseteq\Phi \subseteq {\bm \Pi}^1_2$. Then,
\begin{equation*}
\atr \equiv \eca + \PRFN\ite{ }{\Phi}[T].
\end{equation*}
\end{theorem}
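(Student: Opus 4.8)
The plan is to prove the two inclusions separately. For the inclusion $U + \PRFN\ite{}{\Gamma}[T] \vdash \atr$, since $\{{\tt 0=1}\}\subseteq\Gamma$, I first observe that $\PRFN\ite{}{\Gamma}[T]$ in particular yields a consistency statement for the iterative formalization, so that the main work is to show that $\atr$ is provable. The key idea is the standard one for this family of results: given a well-order $\Lambda$ and an arithmetical $\phi$, one wants to produce a set $X$ with ${\tt TR}_\phi(X,\Lambda)$. Using reflection for a carefully chosen formula, one argues that if no such $X$ existed, then a suitable $\Pi^1_2$ statement would be $\omega$-provable (along some well-order, using $T$'s theorems to carry the arithmetical comprehension needed at each stage) yet false, contradicting $\PRFN\ite{}{\Gamma}[T]$. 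Concretely, one encodes "${\tt TR}_\phi(X,\Lambda)$ holds for the canonical approximations" as something the iterative hierarchy over $T$ can verify level-by-level, and reflects. Here the hypothesis $\eca\subseteq U$ gives the base arithmetic needed to run the formalized syntax, and $U\subseteq\atr$ is used only for the converse.

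For the converse inclusion, $\atr \vdash U + \PRFN\ite{}{\Gamma}[T]$, the inclusion $U\subseteq\atr$ is given, so it remains to verify $\PRFN\ite{}{\Gamma}[T]$ inside $\atr$. Fix $\phi\in\Gamma\subseteq{\bm\Pi}^1_2$, an oracle tuple $\bm A$, and suppose $\RecPX{}T{\bm A}\phi(\dot{\bm n},{\bm O})$; unwinding the definition, there is a well-order $\Lambda$ and an iterated provability class $P$ with ${\tt TR}_{{\tt Iter}_{T|{\bm A}}}(P,\Lambda)$ and $\phi\in P_\lambda$ for some $\lambda\in|\Lambda|$. Using the hypothesis that $\atr$ proves every set — in particular ${\bm A}$ — can be included in a full $\omega$-model $\model$ of $T$, I would argue by transfinite induction along $\Lambda$ (available since $\wo(\Lambda)$ and we are in $\atr$, at least for the relevant complexity after the reduction below) that every $\psi\in P_\eta$ satisfies $\model\models\psi$: the base case uses that $\model$ is an $\omega$-model of $T|{\bm A}$ (soundness for $T$-theorems plus correctness of the oracle axioms, since $|\model|$ extends $\bm A$), and the inductive step handles the $\omega$-rule clause in ${\tt Iter}$ using that $\model$ is a \emph{full} $\omega$-model, so $\model\models\forall x\,\psi(x)$ whenever $\model\models\psi(\bar k)$ for all $k$, together with soundness for the $T$-provable implication $\psi\to\phi$. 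Hence $\model\models\phi$, and since $\phi\in{\bm\Pi}^1_2$ and $|\model|_{<a}=\bm A$, downward absoluteness of $\Pi^1_2$ (or rather the $\Sigma^1_1$-correctness already packaged in Lemma \ref{LemModSound} applied appropriately, pushing the leading $\forall X$ into the model and then using arithmetical soundness) gives $\phi(\bm n,\bm A)$, as required.

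The main obstacle I anticipate is the first inclusion — extracting full $\atr$ from $\PRFN\ite{}{\Gamma}[T]$ — and specifically the bookkeeping that shows the needed transfinite-recursion instances can be "simulated" inside the iterative $\omega$-provability hierarchy over $T$ at each ordinal stage, so that a single reflection instance for a $\Pi^1_2$ (indeed essentially arithmetical-in-a-set) formula suffices. One must be careful that the well-order witnessing $\RecPX{}T{}{}$ can be taken uniformly, and that $U$ (being only assumed to contain $\eca$) is strong enough to formalize the relevant completeness argument for $\omega$-logic over $T$; this is where the completeness machinery promised for \S\ref{SecComp} enters, and I would invoke it rather than reprove it. The converse inclusion is more routine, the only delicate point being the complexity of the transfinite induction along $\Lambda$ used to push truth-in-$\model$ up the $P$-hierarchy — one should check this stays within what $\atr$ (via $\aca$ plus arithmetical transfinite recursion, after coding "$\psi\in S_\model$" which is arithmetical in the satisfaction class) can justify, rather than needing bar induction.
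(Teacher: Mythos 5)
First, a remark on the comparison you were asked for: this paper does not actually prove Theorem~\ref{TheoATR}; it imports it from \cite{CordonFernandezJoostenLara:2014:PredicativityThroughTransfiniteReflection}, so your proposal can only be judged on its own merits. Your second direction ($\atr\vdash\PRFN\ite{}{\Gamma}[T]$) follows the standard soundness-via-transfinite-induction line, but its final step is wrong as stated: there is no ``downward absoluteness of ${\bm\Pi}^1_2$'' from a single full $\omega$-model. If $\phi=\forall X\,\sigma$ with $\sigma\in{\bm\Sigma}^1_1$, then $\model\models\phi$ only yields $\sigma$ for the sets coded in $\model$, an arbitrary $B$ need not be among them, and Lemma~\ref{LemModSound} points in the opposite direction. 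The repair is to fix an arbitrary external $B$ \emph{before} choosing the model: your inductive soundness argument works in any full $\omega$-model of $T$ whose second-order part contains $\bm A$, so apply the model-existence hypothesis to the tuple $\bm A,B$, instantiate the leading universal quantifier at the constant naming $B$, and use upward ${\bm\Sigma}^1_1$ absoluteness; since $B$ was arbitrary, $\phi(\bm n,\bm A)$ follows (equivalently, contrapose through the ${\bm\Sigma}^1_2$ negation). Note also that $[\Lambda]_{T|\bm A}\phi$ is universal over $P$, so ``there is an iterated provability class $P$'' is not free by unwinding; you must first construct $P$ by arithmetical transfinite recursion, which is of course available in this direction.

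The real content is the first direction, and there your sketch is missing the idea that makes it work. The shape ``show the instance $\exists X\,{\tt TR}_\phi(X,\Lambda)$ is $\omega$-provable over $T$ and then reflect'' risks circularity: provable ${\bm\Sigma}^1_1$-completeness of $\omega$-logic (Corollary~\ref{CorComplete}-style) presupposes the truth of the very statement you are trying to establish, and you cannot lean on the machinery of \S\ref{SecComp} here, since Lemma~\ref{LemmEquivATR} and its companions are proved over $\atr$, not over a base $U$ that is only assumed to contain $\eca$. The route of \cite{CordonFernandezJoostenLara:2014:PredicativityThroughTransfiniteReflection} (mirrored for the fixed-point operator in \S\ref{section:OracleConsistencyAndOracleReflection}, cf.\ Lemmas~\ref{LemmExistSPC}, \ref{LemmRefComp}, \ref{LemmTildeExists} and Theorem~\ref{TheoRFNATR}) exploits instead that $[\Lambda]_{T|\bm A}$ is defined by a universal quantifier over $P$: the ${\tt 0=1}$ instance of the scheme (this is exactly why $\{{\tt 0=1}\}\subseteq\Gamma$ is hypothesised) asserts, for every well-order $\Lambda$, the \emph{existence} of an iterated provability class along $\Lambda$; reflection for low-complexity formulas yields arithmetical comprehension; and the transfinite recursion sets required by $\atr$ are then read off from the provability hierarchies themselves, with reflection guaranteeing that the levels compute the recursion correctly, in the style of jump hierarchies. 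Without an argument of this kind, your first inclusion remains essentially unproved.
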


In Theorem \ref{TheoATRBi}, we will extend this result to reflection over higher complexity classes, and show that it also gives rise to an axiomatization of transfinite induction.

\subsection{Formalizing $\omega$-logic via closed sets}

We can also characterize $\omega$-logic `from above' by observing that $\vecgamma$ is a theorem of $\omega$-logic if and only if it belongs to every set that is closed under its rules and axioms. To this end, define a formula
\[{\rm Der}^\rho_T(\vecgamma,Q) \equiv \nc_T \vecgamma \vee \exists {\bm x}\subseteq Q \ \RulePred ^\rho ({\bm x},\vecgamma)  \vee \OmegaRule(Q,\vecgamma),\]
stating that $\vecgamma$ is derivable from $Q$.
Then, define
\[{\rm Closed}^\rho_T (Q) \equiv \forall \vecgamma \big ({\rm Der}^\rho_T (\vecgamma, Q) \rightarrow \vecgamma \in Q \big ).\]

\begin{definition}
Given a theory $T$, define a formula
\[
\ClosedP \rho T \vecgamma \equiv \forall P \ \big ({\rm Closed}^\rho_T ( P ) \rightarrow \vecgamma\in P\big ),
\]
and write $\ClosedPX\rho T {{\VarOne}}\vecgamma (\dot{\bm X})$ instead of $\ClosedP\rho{T|{{\VarOne}}} \vecgamma$.
\end{definition}

\begin{lemma}\label{LemmCompClosOthers}
For any formula $\phi({\bm z},{\bm X})\in {\bm \Pi}^1_\omega$ with all free variables among those shown and $\rho\leq\omega$, $\eca$ proves that
\begin{enumerate}

\item $ \forall \bm X\ \forall \bm z \ \Big (\ProofPX \rho  T{\bm X}\phi
(\dot{\bm z},\dot{\bm X})\to \ClosedPX \rho T{\bm X}\phi (\dot{\bm z},\dot{\bm X}) \Big ),$

\item $ \forall \bm X\ \forall \bm z \ \Big ( \ClosedPX \rho  T{\bm X}\phi
(\dot{\bm z},\dot{\bm X}) \to \ModelsPX \rho T{\bm X}\phi (\dot{\bm z},\dot{\bm X}) \Big ).$

\end{enumerate}

\end{lemma}

\proof
Fix a tuple of numbers $\bm z$ and a tuple of sets $\bm X$, and let $m$ be the length of $\bm X$.
For the first item, let $\langle S,L\rangle $ be an $\omega$-proof of $\phi
(\bar {\bm z} , {\bm C}_{<m} )$ and let $ Q $ be closed under $\omega$-logic, both with an oracle for $\bm X$.
Then the set $\{\bm s \in S: L(\bm s)\not \in Q\}$, which is available in $\eca$, cannot have a minimal element, hence must be empty.
Since $Q$ was arbitrary, we obtain $\ClosedPX \rho T {\bm X}\phi (\bar{\bm z},\dot {\bm X})$.

For the second item, assume $\ClosedPX \rho  T{\bm X}\phi
(\bar {\bm z},\dot{\bm X})$ and let $\mathfrak M $ be an $\omega$-model with satisfaction class $S$ of rank $\rho$ and $|\mathfrak M|_{<m} = \bm X$.
Then, it is readily seen that the set $S$
is closed under the $\omega$-rule and cuts of rank $\rho$, hence $ \phi
(\overline{\bm z} ,{\bm C}_{<m} ) \in Q$ and thus $\model \models \phi
(\bar {\bm z},{\bm C}_{<m})$.
\endproof

\subsection{Formalizing $\omega$-logic via a least fixed point}

Finally, we may consider a formalization of $\omega$-logic via an inductively defined fixed point, rather than its approximations from above or below.

\begin{definition}
Fix a theory $T$, possibly with oracles, and $\rho\leq \omega$. Then, define a formula
\begin{align*}
\spc TQ \ & \equiv \ Q=\mu P. \ {\rm Der}^\rho_T ( n ,Q) .
\end{align*}
If $\spc TQ$ holds we will say that $Q$ is a {\em saturated provability class of rank $\rho$} ($\rho$-SPC) for $T$.
\end{definition}

With this, we may define our fixed point provability operator.

\begin{definition}
We define a formula
\[
\FixP \rho T\vecgamma \equiv \forall P \ \big (\spc T P \rightarrow \vecgamma\in P\big ).
\]
As before, we will write $\FixPX\rho T {{\VarOne}}\vecgamma (\dot{\bm X})$ instead of $\FixP\rho{T|{{\VarOne}}} \vecgamma.
$
\end{definition}

We will often want to apply this operator to formulas rather than sequents; when this is the case, we will identify a formula $\phi$ with the singleton sequent $\langle\phi\rangle$, and write $\FixPX\rho T {{\VarOne}}\phi$ instead of $\FixPX\rho T {{\VarOne}}\langle\phi\rangle$. Since SPC's are defined via an inductive definition, their existence can be readily proven in $\pica$.

\begin{lemma}\label{LemmPICAExists}
Let $T$ be any theory and $\rho\leq\omega$. Then, it is provable in $\pica$ that for every tuple of sets ${\ConOne}$ there exists a set $P$ such that $\spc {T|{\ConOne}}P$ holds.
\end{lemma}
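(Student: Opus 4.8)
The plan is to derive Lemma~\ref{LemmPICAExists} directly from the general existence of least fixed points for positive arithmetical operators that is available in $\pica$, namely Lemma~\ref{LemmExistsFix}. The key observation is that the defining formula of a $\rho$-SPC for $T|{\ConOne}$, viewed as an operator on the set-variable $P$, is an arithmetical formula which is positive on $P$.

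First I would fix a tuple of sets ${\ConOne}$ and encode it by a single set ${\bm A}$ as in \S\ref{SubsOr}, so that the rules of $T|{\ConOne}$ are given by an arithmetical formula with parameter ${\bm A}$. Write
\[\psi(\vecgamma,P)\ \equiv\ \nc_{T|{\ConOne}}\vecgamma \ \vee\ \exists {\bm x}\subseteq P \ \RulePred^\rho({\bm x},\vecgamma)\ \vee\ \OmegaRule(P,\vecgamma).\]
I would then check that $\psi$ is arithmetical: $\nc_{T|{\ConOne}}\vecgamma$ is arithmetical by assumption (it is $\RulePred$ or ${\tt OrAx}$ applied with parameter ${\bm A}$), $\RulePred^\rho\in{\Delta}^0_0$, the bounded set-quantifier $\exists{\bm x}\subseteq P$ together with the arithmetical matrix stays arithmetical, and $\OmegaRule(P,\vecgamma)$ as defined in \S\ref{SecFormal} is arithmetical (it has the shape $\exists\phi\in\vecgamma\,\exists x,\psi<\phi\,(\dots\wedge\forall z\,(\vecgamma,\psi(\dot z)\in P))$, all first-order quantifiers, with $P$ appearing only in the membership statement $\vecgamma,\psi(\dot z)\in P$). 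Crucially, in all three disjuncts $P$ occurs only positively: in $\exists{\bm x}\subseteq P\dots$ the occurrence is as $({\bm x})_i\in P$ and in $\OmegaRule$ it is as $\vecgamma,\psi(\dot z)\in P$; there is no occurrence of the form $t\not\in P$. Hence $\psi(\vecgamma,P)$ is positive on $P$ in the sense of the definition preceding Lemma~\ref{LemmExistsFix}.

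With this in hand, Lemma~\ref{LemmExistsFix} applied to $\psi$ gives, provably in $\pica$, a set $Q$ with $Q=\mu P.\psi$, which is exactly $\spc{T|{\ConOne}}Q$; since ${\ConOne}$ was arbitrary this yields the statement. I would also remark that the parameter ${\bm A}$ (and any set parameters hidden in $\nc_T$) are harmless, as Lemma~\ref{LemmExistsFix} is stated for $\phi(X)\in{\bm\Pi}^0_\omega$, i.e. the boldface arithmetical class allowing set parameters, and $\pica$ proves the `from above' construction uniformly in such parameters.

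The only mild obstacle is bookkeeping rather than mathematics: one must make sure the chosen formalizations of $\RulePred^\rho$, $\OmegaRule$, and the oracle rules are genuinely arithmetical and genuinely positive in $P$ — in particular that the representation of the $\omega$-rule quantifies over $P$ only through positive membership statements, which is the case for the $\OmegaRule(P,\vecgamma)$ displayed in \S\ref{SecFormal}. Once that is verified, the lemma is an immediate instance of Lemma~\ref{LemmExistsFix}.
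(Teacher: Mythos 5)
Your proposal is correct and follows exactly the paper's route: the paper's proof is simply ``Immediate from Lemma~\ref{LemmExistsFix}'', and your verification that the SPC operator is arithmetical and positive on $P$ (with the oracle parameters absorbed as set parameters in ${\bm\Pi}^0_\omega$) is precisely the bookkeeping that justifies that one-line proof.
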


\begin{proof}
Immediate from Lemma \ref{LemmExistsFix}.
\end{proof}

It is important to note that we have defined $\FixPX\rho T {{\VarOne}}\vecgamma$ by quantifying universally over all SPCs, so that $\Neg\FixPX\rho T {{\VarOne}} \vecgamma$ quantifies existentially over them. This means that such consistency statements automatically give us a bit of comprehension:

\begin{lemma}\label{LemmExistSPC}
If $T$ is any theory and $\vecgamma$ any sequent, then
\[\eca\vdash \forall {{\VarOne}}\ \big(\Neg\FixPX\rho T{{\VarOne}} \vecgamma (\dot{\bm X}) \ \to \ \exists P\ \spc {T|{{\VarOne}}}P\big).\]
\end{lemma}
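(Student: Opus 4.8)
The plan is to simply unfold the definition of $\FixPX\rho T{\VarOne}\vecgamma$ and observe that its negation already asserts, verbatim, the existence of a saturated provability class; no construction is needed.

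First I would recall that $\FixPX\rho T{\VarOne}\vecgamma$ abbreviates $\forall P\,(\spc{T|\VarOne}P \to \vecgamma\in P)$, and that, since implication is defined in our negation-free presentation by $\phi\to\psi \equiv \Neg\phi\vee\psi$, this is literally $\forall P\,(\Neg\spc{T|\VarOne}P \vee \vecgamma\in P)$. Taking the outer negation and pushing it inward via the De Morgan / quantifier dualities that define $\Neg$, one gets that $\Neg\FixPX\rho T{\VarOne}\vecgamma$ is logically equivalent to $\exists P\,(\spc{T|\VarOne}P \wedge \vecgamma\notin P)$. The conclusion is then immediate: from $\Neg\FixPX\rho T{\VarOne}\vecgamma$ we obtain a witness $P$ with $\spc{T|\VarOne}P$ (and, incidentally, $\vecgamma\notin P$, which we discard), hence $\exists P\ \spc{T|\VarOne}P$. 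Since every step is a manipulation of classical predicate logic, it is available in $\eca$, and in fact this is why the lemma is stated for $\eca$ rather than for $\pica$ as in Lemma~\ref{LemmPICAExists}: we do not claim that an SPC exists outright, only that its existence is delivered for free whenever the corresponding consistency statement holds.

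The only point requiring even minimal care — and it is not really an obstacle — is the bookkeeping around the negation-free syntax: one must check that the subformula $\Neg\spc{T|\VarOne}P$ sitting inside $\FixPX\rho T{\VarOne}\vecgamma$ is, after applying the outer negation, restored to $\spc{T|\VarOne}P$ itself. This holds because $\Neg$ in a Tait-style calculus is an involution on formulas, defined syntactically through the classical dualities, so double negations collapse. Granting that, the proof is a one-line unfolding followed by dropping a conjunct.
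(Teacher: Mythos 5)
Your proposal is correct and is exactly the paper's (implicit) argument: the paper states the lemma without a separate proof, having just observed that since $\FixPX\rho T{\VarOne}\vecgamma$ quantifies universally over SPCs, its negation quantifies existentially over them, so the witness $P$ immediately yields $\exists P\ \spc{T|\VarOne}P$. Your additional remark about $\Neg$ being an involution in the negation-free Tait-style syntax is the right bookkeeping point and poses no obstacle in $\eca$.
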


However, this instance of comprehension by itself does not necessarily carry additional consistency strength, in the following sense:

\begin{lemma}\label{theorem:AddingExistenceIPCsRemainsEquiconsistent}
If $T$ is a theory extending $\rca$, and $\rho \leq \omega$,
\[
T \equiv_{\Pi^0_1} T + \forall {{\VarOne}}\, \exists P\ \spc{T|{{\VarOne}}} P;
\]
that is, the two theories prove the same $\Pi^0_1$ sentences.
\end{lemma}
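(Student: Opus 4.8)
The plan is to prove the nontrivial direction, namely that every $\Pi^0_1$ sentence provable in $T + \forall\VarOne\,\exists P\,\spc{T|\VarOne}P$ is already provable in $T$; the converse is trivial. I would argue by formalizing a cut-elimination / partial-truth argument inside $T$. The key observation is that although $\eca$ cannot in general produce an SPC, it can reason about the \emph{finite approximation stages} of the inductive definition, and for the purpose of verifying a single $\Pi^0_1$ consequence we never need the full fixed point: a $\Pi^0_1$ sentence $\forall x\,\theta(x)$ can be refuted by a single witness, and the derivation witnessing $\Neg\theta(\bar n)$ uses only boundedly much of the putative SPC.

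The main steps, in order, would be: (1) Work in $T$ and assume toward a contradiction that $T\nvdash\forall x\,\theta(x)$ while $T+\forall\VarOne\,\exists P\,\spc{T|\VarOne}P\vdash\forall x\,\theta(x)$. Fix a $T$-proof of $\forall x\,\theta(x)$ from finitely many instances of the axiom $\exists P\,\spc{T|\VarOne}P$; since $\theta$ is $\Pi^0_1$, by standard proof-theoretic manipulation (Herbrand-style analysis of the existential set-quantifiers, available already in $\eca$ for the logical transformations and in $T$ for the soundness bookkeeping) it suffices to derive a contradiction from $T$ together with the single statement $\exists P\,\spc{T|\VarOne}P$ for one tuple $\VarOne$. (2) Here is the crux: reason in $T$ about an \emph{arbitrary} model, or more concretely use the fact that $T$ is $\Sigma^0_1$-complete and that the oracle rules $(\mbox{\sc o}_\in),(\mbox{\sc o}_{\not\in})$ for a \emph{standard} set are conservative — but since $\VarOne$ is a variable, instead I would observe that $T$ proves $\Neg\forall x\,\theta(x)\to\exists n\,\Neg\theta(\bar n)$, and that $\Neg\theta(\bar n)$ being a true $\Sigma^0_1$ sentence is provable in $T$, hence derivable in any $T|\VarOne$, hence a theorem of the supposed SPC; this only requires $T$ to verify soundness of the SPC for \emph{true atomic/$\Sigma^0_1$ sentences}, which is exactly the $\Sigma^0_1$-completeness assumption baked into the calculus in \S\ref{SecFormal}. (3) Conclude that $T$ proves: if $T$ is consistent and $\exists P\,\spc{T|\VarOne}P$, then $\forall x\,\theta(x)$ — but the only use of the SPC was closure under $T$'s own axioms and the true-atomic rule, so in fact $T$ already proves $\forall x\,\theta(x)$, contradicting our assumption. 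The cleanest packaging is: the SPC, wherever it exists, contains $\nc_T$-theorems and is closed under the Tait rules including (\textsc{lem}) and the true-atomic scheme, so any $\Pi^0_1$ sentence "certified" via the SPC is certified by a finite cut-free-enough sub-derivation that $\eca$ (hence $T$) can reconstruct without the SPC at all.

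I expect the main obstacle to be step (2): making precise, inside $T$ and in a way that handles the free oracle tuple $\VarOne$ uniformly, that the extra axiom contributes nothing beyond what $\Sigma^0_1$-completeness already gives. The delicate point is that $\spc{T|\VarOne}P$ is a genuinely impredicative ($\Pi^1_2$-ish) statement, so one cannot naively "evaluate" it in $\eca$; the trick is that for a \emph{fixed} $\Pi^0_1$ target we only ever invoke the implication $\spc{T|\VarOne}P\to(\phi\in P)$ for $\phi$'s appearing in a single finite proof, and each such membership, unwound through ${\tt Closed}$, reduces to closure under finitely many rule instances — a statement $T$ can discharge directly. Once that reduction is set up, the rest is routine bookkeeping about Gödel numbers of finite derivations, of the kind the paper has already assumed $\eca$ can handle.
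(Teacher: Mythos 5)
There is a genuine gap, and it sits exactly where you flag it: steps (2)--(3). Your reduction assumes that a proof of a $\Pi^0_1$ sentence from $\forall \VarOne\,\exists P\ \spc{T|{\VarOne}}P$ only ever uses the hypothesized SPC through finitely many membership facts $\phi\in P$, each of which unwinds via ${\tt Closed}$ into finitely many rule instances that $T$ could re-derive on its own. That is not so: the axiom asserts the existence of a \emph{set}, and a derivation in $T$ plus this axiom may use $P$ as a parameter in $T$'s own schemes (induction, comprehension, whatever else $T$ has), not merely query it for membership; moreover, membership in the least fixed point is in general witnessed only by infinite well-founded derivations through the $\omega$-rule, so there is no ``finite cut-free-enough sub-derivation'' to reconstruct, and no Herbrand-style witnessing is available for the second-order existential quantifier $\exists P$. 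A decisive sanity check: nothing in your elimination argument uses that the SPC is for $T$ \emph{itself}, yet the lemma fails badly without that hypothesis --- by the concluding remark of \S\ref{section:CountableCodedModels}, $\aca+\exists P\ {\tt SPC}^0_{\rca}(P)\equiv\pica$, which proves new $\Pi^0_1$ sentences (e.g.\ the consistency of $\aca$). So any correct proof must exploit the coincidence of the base theory and the theory inside the SPC, which your sketch never does.

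The paper's argument is entirely different and hinges on the inconsistency statement: $T+\nc_T\bot$ proves $\forall \VarOne\,\exists P\ \spc{T|{\VarOne}}P$, since under $\nc_T\bot$ every sequent counts as a $T$-axiom, so the set of all sequents (available by ${\bm\Delta}^0_0$ comprehension) is the least closed set and hence the SPC; and $T+\nc_T\bot$ is $\Pi^0_1$-conservative over $T$ by the standard argument combining provable $\Sigma^0_1$-completeness of the provability predicate $\nc_T$ with G\"odel's second incompleteness theorem. Your proposal contains neither ingredient --- in particular nothing plays the role of G2 or of formalized $\Sigma^0_1$-completeness of $\nc_T$ (the $\Sigma^0_1$-completeness of the Tait calculus for true atomic sentences that you invoke is a different, and here insufficient, fact) --- so as it stands it does not establish the lemma.
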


This is proven in \cite{FernandezJoosten:2013:OmegaRuleInterpretationGLP} for a weaker notion of provability, but the argument carries through in our setting.
Roughly, we observe that $T + \Box_T \bot \equiv_{\Pi^0_1} T$, but $T + \Box_T\bot \ \vdash \ T + \forall {{\VarOne}}\, \exists P \ \spc{T|{{\VarOne}}} P$, since in this case an SPC would simply consist of the set of all sequents.
Note that for $\rho<\omega$ we use the finitary cut-elimination theorem, available in $\rca$, to eliminate cuts not applied to axioms from a proof of contradiction in $T$.

Unlike the existence of SPCs, their {\em uniqueness} is immediate from their definition.

\begin{lemma}\label{LemmUniqueSPC}
If $T$ is any theory and $\rho\leq\omega$, we have that
\[
\eca \vdash \forall {{\VarOne}} \ \exists_{\leq 1}P\ \spc {T|{{\VarOne}}} P,
\]
where $\exists_{\leq 1}P\, \phi(P)$ is an abbreviation of $\forall P \ \forall Q \ \big(\phi(P)\wedge\phi(Q)\to P = Q\big)$.
\end{lemma}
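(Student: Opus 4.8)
The plan is to exploit the fact that $\spc {T|{{\VarOne}}} P$ asserts that $P$ is *the* least fixed point of a fixed monotone operator, and that least fixed points, when they exist, are unique by definition. Concretely, $\spc {T|{{\VarOne}}} P$ unfolds to ${\tt Closed}_\psi(P)\wedge\forall Y({\tt Closed}_\psi(Y)\to P\subseteq Y)$ for the operator $\psi$ given by $\nc_{T|{{\VarOne}}}\vecgamma\vee\exists{\bm x}\subseteq P\,\RulePred^\rho({\bm x},\vecgamma)\vee\OmegaRule(P,\vecgamma)$. Suppose $\spc {T|{{\VarOne}}} P$ and $\spc {T|{{\VarOne}}} Q$ both hold. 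From the second conjunct applied to $P$ together with ${\tt Closed}_\psi(Q)$ we get $P\subseteq Q$; symmetrically, the minimality clause for $Q$ together with ${\tt Closed}_\psi(P)$ gives $Q\subseteq P$. Hence $P=Q$ by extensionality. The universal closure over ${{\VarOne}}$ is then immediate.

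The only thing to check is that this argument is formalizable in $\eca$. Each of the formulas $\nc_{T|{{\VarOne}}}$, $\RulePred^\rho$, $\OmegaRule$ is arithmetical (indeed the first two are quite low in the hierarchy and $\OmegaRule$ is $\Pi^0_1$ relative to its set argument), so ${\tt Closed}_\psi(Y)$ and the matrix of $\spc{}{}$ are legitimate formulas of second-order arithmetic; no comprehension is needed, since we are only manipulating sets that are *assumed* to exist. The derivations $P\subseteq Q$ and $Q\subseteq P$ are pure first-order logic from the two conjuncts of the respective instances of $\spc{}{}$: one simply instantiates the universally quantified $Y$ in the minimality clause of one SPC with the other SPC, using that the latter satisfies ${\tt Closed}_\psi$. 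This reasoning uses only predicate logic over the axioms of $\Robinson$, well within $\eca$.

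There is essentially no obstacle here; the lemma is, as the surrounding text says, ``immediate from their definition.'' The one point deserving a word of care is that the operator $\psi$ appearing in ${\tt Closed}_\psi$ must be literally the same in the two hypotheses $\spc{T|{{\VarOne}}}P$ and $\spc{T|{{\VarOne}}}Q$ — which it is, since $T$, ${{\VarOne}}$ and $\rho$ are fixed — so that the minimality clause of one genuinely applies to the other. I would write the proof as a short paragraph: fix $T$, $\rho$, and ${{\VarOne}}$, assume $\spc{T|{{\VarOne}}}P\wedge\spc{T|{{\VarOne}}}Q$, note that $Q$ is closed under the operator and that $P$ is below every closed set, conclude $P\subseteq Q$, argue symmetrically for $Q\subseteq P$, and invoke extensionality — all of which $\eca$ verifies.
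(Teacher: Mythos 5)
Your proof is correct and is exactly the argument the paper has in mind when it says uniqueness is ``immediate from their definition'': since $\spc{T|{\VarOne}}P$ unfolds to ${\tt Closed}_\psi(P)$ plus minimality among closed sets, instantiating each minimality clause with the other set gives $P\subseteq Q$ and $Q\subseteq P$, i.e.\ $P=Q$ under the paper's defined set equality, all by pure logic within $\eca$. No comparison is needed beyond this; your formalizability remarks are accurate.
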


As one might expect, adding new sets to our oracle gives us a stronger theory:

\begin{lemma}\label{OracleMonotonicity}
Let $T$ be any theory and $\rho\leq\omega$. It is provable in $\eca$ that if $\ConOne$ is a tuple of sets and there exists an SPC for $T|{\ConOne}$, then for any sequent $\vecgamma$ and any set $Y$,
\[\FixPX \rho T {{\ConOne}}\vecgamma (\dot{\ConOne}) \to \FixPX \rho T {{\ConOne},B}\vecgamma (\dot{\ConOne}, \dot Y).\]
\end{lemma}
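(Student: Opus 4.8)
The plan is to work entirely inside $\eca$ and exploit the fact that $\FixPX{\rho}{T}{\ConOne}\vecgamma$ is defined by a \emph{universal} quantifier over SPCs, so that to prove the implication it suffices to take an arbitrary SPC for $T|{\ConOne},B$ and show it is (or contains) an SPC for $T|{\ConOne}$. So first I would assume the hypothesis: $\ConOne$ is a tuple of sets, $B$ a further set, and there exists some $P_0$ with $\spc{T|{\ConOne}}{P_0}$. Fix also an arbitrary $Q$ with $\spc{T|{\ConOne},B}{Q}$ (if no such $Q$ exists the conclusion $\FixPX{\rho}{T}{\ConOne},B}\vecgamma$ is vacuously universally true, so there is nothing to prove — actually one should be slightly careful here, since $\FixP{}{}$ is a universal statement and hence vacuously true when there are no SPCs, so the interesting case is exactly when $Q$ exists).

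The key observation is that the defining operator for $T|{\ConOne},B$ is pointwise above the defining operator for $T|{\ConOne}$: since $\nc_{T|{\ConOne},B}$ includes $\nc_{T|{\ConOne}}$ together with the extra oracle axioms $(\mathord{\mbox{\sc o}_\in})$, $(\mathord{\mbox{\sc o}_{\not\in}})$ for $B$, and since $\RulePred^\rho$ and $\OmegaRule$ are unchanged, any set closed under the $T|{\ConOne},B$-operator is automatically closed under the $T|{\ConOne}$-operator. In particular ${\tt Closed}_{T|{\ConOne},B}(Q)$ implies ${\tt Closed}_{T|{\ConOne}}(Q)$, so $Q$ is a closed set for the $T|{\ConOne}$-inductive definition. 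By minimality in the clause $\spc{T|{\ConOne}}{P_0}$ — specifically the conjunct $\forall Y({\tt Closed}_{T|{\ConOne}}(Y)\to P_0\subseteq Y)$ — we conclude $P_0\subseteq Q$. Now suppose $\FixPX{\rho}{T}{\ConOne}\vecgamma$ holds; instantiating its universal quantifier at $P_0$ gives $\vecgamma\in P_0$, hence $\vecgamma\in Q$. Since $Q$ was an arbitrary SPC for $T|{\ConOne},B$, this establishes $\FixPX{\rho}{T}{\ConOne},B}\vecgamma$.

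Two points deserve care. First, everything above is arithmetical bookkeeping about the inductive operator, and the only comprehension used is that needed to form the formulas ${\tt Closed}_{T|\ConOne}$ and ${\tt Closed}_{T|\ConOne,B}$ and to carry out the containment argument, all of which is available in $\eca$; crucially we do \emph{not} need any SPC to exist for $T|{\ConOne},B$, only the hypothesized one for $T|{\ConOne}$. Second, I should make sure the encoding of oracle tuples from \S\ref{SubsOr} behaves well: passing from ${\ConOne}$ to ${\ConOne},B$ means the new oracle rules are a superset of the old ones (the pair $\langle 0,n\rangle$ recording the length changes, but the membership facts about the original $A_i$ are preserved), so ${\tt OrAx}_T(\,\cdot\,,{\ConOne})$-instances are among the ${\tt OrAx}_T(\,\cdot\,,({\ConOne},B))$-instances and the monotonicity of the operator goes through as claimed.

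The main obstacle, such as it is, is entirely a matter of formalization rather than mathematics: verifying within $\eca$ that ${\tt Closed}_{T|\ConOne,B}(Q)\to{\tt Closed}_{T|\ConOne}(Q)$, i.e.\ that the disjunction defining the $T|\ConOne$-operator implies the disjunction defining the $T|\ConOne,B$-operator uniformly in the sequent variable $\vecgamma$. This reduces to the syntactic fact that $\nc_{T|{\ConOne}}\vecgamma\to\nc_{T|{\ConOne},B}\vecgamma$ is provable in $\eca$, which follows from the definition ${\tt Rule}_{T|{\bm A}}(x,y)=\RulePred(x,y)\vee{\tt OrAx}_T(x,{\bm A})$ and the observation that $\eca$ proves ${\tt OrAx}_T(x,{\ConOne})\to{\tt OrAx}_T(x,({\ConOne},B))$. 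Once this lemma is in hand the rest is the one-line minimality argument above.
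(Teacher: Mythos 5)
Your proposal is correct and follows essentially the same route as the paper's proof: take an arbitrary SPC $Q$ for $T|{\ConOne},B$, observe it is closed under the axioms and rules of $T|{\ConOne}$, invoke minimality of the hypothesized SPC for $T|{\ConOne}$ to get containment, and conclude $\vecgamma\in Q$. Your additional remarks on formalizing the operator monotonicity and the oracle-tuple encoding in $\eca$ are fine elaborations of details the paper leaves implicit.
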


\begin{proof}
Suppose that $\FixPX \rho T {{\ConOne}}\vecgamma(\dot{\ConOne})$. Using our assumption, we may choose an SPC $P$ for $T|{{\ConOne}}$, so that $\vecgamma\in P$. Let $Q$ be an arbitrary SPC for $T|{{\ConOne}}, Y $. Observe that $Q$ contains all axioms of $T|{\ConOne}$ and is closed under all of its rules, so that by the minimality of $P$, we have that $P\subseteq Q$ and thus $\vecgamma \in Q$. Since $Q$ was arbitrary, it follows that $\FixPX \rho T {{\ConOne},B}\vecgamma (\dot{\ConOne}, \dot Y)$, as needed.
\end{proof}

Obseve also that our least-fixed-point formalization of $\omega$-provability is at least as strong as the formalization using $\omega$-proofs.
The next lemma follows easily from Lemma \ref{LemmCompClosOthers}.

\begin{lemma}\label{LemmCompTreeFix}
Given any formula $\phi$ and $\rho\leq\sigma\leq \omega$,
\[\eca \vdash \forall {\bm A} \forall {\bm n} \ \big ( \ProofPX \cpar T{\bm A}\vecgamma (\dot{\bm A}) \rightarrow \FixPX\sigma T {\bm A}\vecgamma (\dot{\bm A})\big ) .\]
\end{lemma}

Our goal now is to prove impredicative reflection within $\pica$. The following is a first approximation: $\pica$ proves that any formula proven in $\omega$-logic with oracles is true in any $\omega$-model.

\begin{lemma}[$\omega$--model soundness]\label{LemmOmSoundACA} Given any theory $T$, formula $\phi({\bm z},{\bm X})$ with all free variables among those shown, $a = |\bm X|$ and $\rho \leq \omega$,
\begin{enumerate}

\item\label{ItOnSOundOne} $\eca\vdash \forall P \ \forall  \bm X \ \forall \bm z \ \Big ( \spc {T|\bm X}P\wedge \ulcorner \phi(\dot{\bm z},{\bm C}_{<a})\urcorner \in P\to \ModelsPX\rho T{\bm X}\phi(\dot {\bm z},{\dot {\bm X}}) \Big );$

\item\label{ItOnSOundTwo} $\pica\vdash \forall \bm X  \ \forall \bm z \ \Big ( \FixPX\rho T{{\bm X}}\phi(\dot{\bm z},{\dot {\bm X}})\to \ModelsPX \rho T{{\bm X}}\phi(\dot{\bm z},{\dot {\bm X}}) \Big ).$

\end{enumerate}

\end{lemma}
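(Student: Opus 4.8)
The plan is to prove part \ref{ItOnSOundOne} by a closure argument inside $\aca$, and then derive part \ref{ItOnSOundTwo} as a corollary using Lemma \ref{LemmPICAExists}. For part \ref{ItOnSOundOne}, fix an SPC $P$ for $T$ (with the oracle tuple $\ConOne$), together with an arbitrary $\Gamma$-valued $\omega$-model $\model$ of rank at least $\rho$ of $T$ satisfying $|\model|_{<a}=\ConOne$ and whose satisfaction class covers the relevant formulas. The key object is the set of ``bad'' sequents, i.e., those $\vecgamma\in P$ such that $\model$ does not satisfy $\vecgamma$ in the sense that $\model\not\models\gamma$ for every $\gamma\in\vecgamma$ (reading second-order constants $O_i$ as the sets $\bm M_i$). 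Since $P$ is defined as a least fixed point of a positive arithmetical operator, and $\model$ is coded by a single set, the collection of bad sequents is arithmetical in the parameters $P,\model$, hence available in $\aca$. The proof then shows this set is ${\tt Closed}$ under the defining operator, or rather — since we want it to be {\em empty} — that its complement within $P$ is closed, so by minimality of $P$ (i.e.\ $P\subseteq$ that complement) no sequent in $P$ is bad. Concretely one checks the three disjuncts in the definition of $\spc TP$: if $\nc_T\vecgamma$, then since $\model$ is an $\omega$-model of $T$ we have $\model\not\models{\sim}\gamma$ for some $\gamma\in\vecgamma$, and using that $S_\model$ is a $\Gamma$-satisfaction class ($\phi\in S_\model$ or $\Neg\phi\in S_\model$) we get $\model\models\gamma$; if $\vecgamma$ follows by a finitary rule $\RulePred^\rho$ from sequents already satisfied by $\model$, the recursive clauses of the satisfaction class and the rank bound $\rho$ give satisfaction of $\vecgamma$ (this is where the rank-$\rho$ hypothesis on $\model$ and the $\cut$-rank restriction $\RulePred^\rho$ are used in tandem, so cut formulas stay within the rank covered by $S_\model$); and if $\vecgamma$ follows by the $\omega$-rule from $\{\vecgamma,\psi(\dot z):z\in\mathbb N\}\subseteq P$ all of which $\model$ satisfies, then the clause for $\forall u\,\phi(u)$ yields $\model\models\forall x\,\psi(x)$, hence $\model$ satisfies $\vecgamma$. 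Applying this with $\vecgamma=\langle\phi(\dot{\bm n},\bm O_{<a})\rangle$ gives $\model\models\phi(\dot{\bm n},\bm C_{<a})$, and since $\model$ was an arbitrary such model, $\ModelsPX\rho T\ConOne\phi(\dot{\bm n},\bm C_{<a})$.

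For part \ref{ItOnSOundTwo}, work in $\pica$. By Lemma \ref{LemmPICAExists}, for the given tuple $\ConOne$ there is an SPC $P$ with $\spc{T|\ConOne}P$. The hypothesis $\FixPX\rho T\ConOne\phi(\dot{\bm n},\bm O_{<a})$ says precisely that $\langle\phi(\dot{\bm n},\bm O_{<a})\rangle\in P$ for every SPC, in particular for this $P$. Now invoke part \ref{ItOnSOundOne}, which is available since $\aca\subseteq\pica$, to conclude $\ModelsPX\rho T\ConOne\phi(\dot{\bm n},\bm C_{<a})$. This is the desired implication.

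The main obstacle I anticipate is not the closure argument itself but bookkeeping around the satisfaction class: one must be careful that the fixed formula $\phi$ and all its subformulas — together with the substitution instances $\psi(\bar n)$ and $\psi(C_k)$ arising from quantifier clauses, and the cut formulas of rank $<\rho$ — all lie in the class $\Gamma$ on which $S_\model$ is defined, so that the biconditional ``$\chi\in S_\model$ or $\Neg\chi\in S_\model$'' is actually applicable at each step; this is exactly what the ``rank at least $\rho$'' and ``$\phi\in\Gamma$'' conditions in the definition of $\ModelsPX\rho T\ConOne{\,\cdot\,}$ are designed to guarantee, but spelling out the induction on the build-up of $P$ against the build-up of formulas requires some care. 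A secondary point is verifying that the set of bad sequents is genuinely arithmetical in $(P,\model)$ despite the satisfaction clauses quantifying over all $n\in\mathbb N$ — this is fine because those are number quantifiers, so the whole predicate is arithmetical and $\aca$-comprehension suffices; no impredicativity is needed for part \ref{ItOnSOundOne}, only for the existence of $P$ in part \ref{ItOnSOundTwo}.
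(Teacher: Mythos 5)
Your proposal is correct and follows essentially the same route as the paper: part \ref{ItOnSOundOne} is proved in $\aca$ by playing the minimality clause of $\spc TP$ against the (translated) satisfaction class of an arbitrary model, checking closure under the $T$-axiom, finitary-rule, and $\omega$-rule disjuncts, and part \ref{ItOnSOundTwo} follows by instantiating this at the SPC whose existence $\pica$ guarantees (Lemma \ref{LemmPICAExists}). The only cosmetic difference is that you phrase the minimality argument via the set of ``bad'' sequents rather than translating $S_\model$ into the oracle language as the paper does, which amounts to the same closure computation.
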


\begin{proof}
For the first claim, reason in $\eca$. Let $\model$ be any model of $T$ of rank $\rho$ and let $P$ be a saturated provability class for $T|{\bm X}$ of rank $\rho$.
Then, $\sat_\model$ is closed under all the rules and axioms defining $P$, so that, by minimality, $P\subseteq \sat_\model$. It follows that if $\phi(\overline{\bm z},{\bm C}_{<a}) \in P$, then $\phi(\overline{\bm z},{\bm C}_{<a})\in \sat_\model$; that is, $\model\models \phi(\overline{\bm z},{\bm C}_{<a})$.

The second claim then follows from the first, together with the provable existence of a $\rho$-SPC in $\pica$.
\end{proof}

\section{Completeness and strong predicative reflection}\label{SecComp}

In this section we will recall some completeness results for formalized $\omega$-logic. It is well-known that $\omega$-logic is ${\Pi}^1_1$-complete \cite{Pohlers:2009:PTBook}, but it will be convenient to keep track of the second-order axioms needed to prove this. From these results, we will obtain a more general form of Theorem \ref{TheoATR}.

\subsection{Completeness results for $\omega$-logic}

We begin with a weak completeness result available in $\eca$.

\begin{lemma}\label{LemmComplete}
Fix a theory $T$ and $\rho\leq\omega$. Let $\vecgamma({\varThree},{\VarOne})\subseteq {\bm \Pi}^0_\omega$ be finite with all free variables among those shown. Then, it is provable in $\eca$ that

\begin{equation}\label{EqLemmComplet}
\forall {\bm X}\, \forall {\bm z}\, \Big(\bigvee\vecgamma ({\bm z},{\bm X})\rightarrow \FixPX \rho T {{\ConOne}}  \vecgamma( \dot{\bm z},{\dot{\bm X}}) \, \Big).
\end{equation}

\end{lemma}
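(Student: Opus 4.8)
The plan is to prove the statement by external induction on the structure of the formulas in the sequent $\vecgamma$, using the fact that $\eca$ proves $\Sigma^0_1$-completeness of the Tait calculus together with the closure conditions built into the definition of a saturated provability class. The key observation is that $\FixPX{\rho}{T}{\ConOne}\vecgamma$ asserts that $\vecgamma$ belongs to \emph{every} SPC $P$; so, reasoning inside $\eca$, we fix an arbitrary $P$ with $\spc{T|\ConOne}{P}$ (if none exists the implication is vacuous, but in fact we only need to argue "for all $P$") and show $\bigvee\vecgamma(\bm n,\ConOne)\to \vecgamma(\dot{\bm n},\bm O)\in P$. Since $P$ is closed under the rules $\RulePred^\rho$, the rules $(\mbox{\sc lem})$, $(\wedge)$, $(\vee)$, $(\forall^0)$, $(\exists^0)$, weakening, and the oracle rules $(\mbox{\sc o}_\in)$, $(\mbox{\sc o}_{\not\in})$, as well as under the $\omega$-rule via $\OmegaRule(P,\cdot)$, it behaves like a "truth set" for arithmetical sentences relative to the oracle $\ConOne$.

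First I would set up the external induction on $n = \sum_{\gamma\in\vecgamma}\qrk(\gamma)$ (or on the multiset of ranks, ordered suitably), proving the schema: for each fixed arithmetical tuple $\vecgamma$, $\eca$ proves \eqref{EqLemmComplet}. Base case: if every $\gamma_i$ is atomic or negated atomic, then $\bigvee\vecgamma(\bm n,\ConOne)$ is a true $\Delta^0_0$ statement about $\bm n$ and the oracle, and using the oracle rules together with the assumed $\Sigma^0_1$-completeness of the calculus formalized in $\eca$ — plus the fact that $P$ is closed under $(\mbox{\sc lem})$ and weakening to dispose of the false disjuncts — one gets $\vecgamma(\dot{\bm n},\bm O)\in P$. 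For the inductive step, pick a non-atomic $\gamma_j$ of maximal rank and case on its outermost connective: if $\gamma_j=\phi\vee\psi$, from $\bigvee\vecgamma$ we either satisfy $\phi$, $\psi$, or some other $\gamma_i$; in all cases the induction hypothesis (applied to the sequent with $\gamma_j$ replaced by $\phi,\psi$, which has strictly smaller rank sum) puts $(\vecgamma',\phi(\dot{\bm n}),\psi(\dot{\bm n}))$ — or $\vecgamma'$ plus $\gamma_i$ — into $P$, and closure under $(\vee)$ and weakening yields $\vecgamma(\dot{\bm n},\bm O)\in P$. The cases $\wedge$, $\exists^0$ are similar, using closure under $(\wedge)$ resp. $(\exists^0)$; note $\exists x\,\phi(x)$ true means $\phi(\bar k)$ true for some specific $k$, and $\phi(\overline{k})$ has smaller rank.

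The one genuinely infinitary case is $\gamma_j=\forall x\,\phi(x)$: here $\bigvee\vecgamma$ being true splits into "$\forall k\,\phi(\bar k)$ true" or "some other disjunct true". In the first subcase, for \emph{each} $k$ the induction hypothesis gives $\eca\vdash \forall\ConOne\,\forall\bm n\,(\phi(\bar k,\bm n,\ConOne)\to\vecgamma'(\dot{\bm n},\bm O),\phi(\overline k,\dot{\bm n},\bm O)\in P)$, hence $\eca$ proves $\forall z\,(\vecgamma'(\dot{\bm n},\bm O),\phi(\dot z,\dot{\bm n},\bm O)\in P)$ under the hypothesis — this is exactly the antecedent of $\OmegaRule(P,(\vecgamma',\forall x\,\phi(x)))$, so closure of $P$ under the $\omega$-rule gives $\vecgamma(\dot{\bm n},\bm O)\in P$. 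The subtlety here, and what I expect to be the main obstacle, is that the induction is over the \emph{external} (meta-level) structure of $\vecgamma$, but the $\omega$-rule closure must be invoked \emph{internally} in $\eca$ — so I must be careful that the meta-induction produces, for the fixed standard formula $\forall x\,\phi(x)$, a single $\eca$-proof of the universally quantified statement $\forall z\,(\ldots\in P)$ rather than a schema of instances; this works because the induction hypothesis for the immediate subformula $\phi$ is itself a single $\eca$-proof with a free variable in the numeral position, and the term $\phi(\dot z,\dot{\bm n})$ is elementary. A secondary point to handle carefully is the free second-order variables: the $\bm O$ on the right-hand side versus the genuine set-variables $\VarOne$ on the left are bridged precisely by the oracle rules $(\mbox{\sc o}_\in),(\mbox{\sc o}_{\not\in})$, which is why the statement is about $T|\ConOne$ and why membership/non-membership facts about $\ConOne$ transfer into $P$. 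Finally, I would note that $\rho$ plays no role since we never invoke $(\mbox{\sc cut})$ — all the closure rules used are cut-free — so the result holds uniformly for every $\rho\leq\omega$.
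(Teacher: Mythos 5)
Your proposal is correct and follows essentially the same route as the paper's proof: an external induction on the arithmetical formulas of the sequent, showing membership in an arbitrary SPC $P$ via its closure under the finitary Tait rules, the oracle rules, and the $\omega$-rule, with the quantifier-free case handled by formalized $\Sigma^0_1$-completeness. Your explicit treatment of the uniformity issue in the $\forall$-case (the induction hypothesis being a single $\eca$-proof with a free variable in numeral position, so that $\forall z\,(\cdots\in P)$ is available internally before invoking the $\omega$-rule closure) is precisely the point the paper leaves implicit, and your bookkeeping via rank-sums of the whole sequent is only a cosmetic variant of the paper's induction on the designated true disjunct.
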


\begin{proof}
Reasoning within $\eca$, fix a tuple $\bm z$ of natural numbers and $\bm X$ of sets, and let $a = |\bm X|$. Assume that $\bigvee\vecgamma({\bm z},{\bm X})$ holds, and write $\vecgamma=(\vecdelta,\phi)$ so that $\phi\in\vecgamma$ holds. We proceed by an external induction on $\phi$. Assume that $P$ is an arbitrary SPC for $T|\bm X$; we must prove that $\big ( \vecdelta,\phi(\bar{\bm z}, {\bm C}_{<a} ) \big )\in P$. If $\phi$ is atomic we use our standing assumption that the Tait calculus is formalized so that it is provably complete for atomic sentences.

Now assume that $\phi$ contains quantifiers. Let us consider the case where $\phi=\forall x \ \theta$. By the external induction hypothesis we have, for every $k$, that
\[\big (\vecdelta, \theta(\bar k,\bar {\bm z}, {\bm C}_{<a} )\big) \in P.\]
But, $P$ is closed under the $\omega$-rule, so we also have that
\[\big (\vecdelta, \forall x \, \theta (x,\bar {\bm z}, {\bm C}_{<a} )\big)\in P.\]

The remaining cases follow a similar structure; the case where $\phi$ is a Boolean combination of its subformulas is straightforward using the rules of the Tait calculus, and if $\phi=\exists x \ \theta(x)$, then for some $k$ we have that $\theta (\overline k)$ is true and we may use the induction hypothesis plus existential introduction.
\end{proof}

So, $\eca$ already proves the completeness of $\omega$-logic for arithmetical formulas, but we need to turn to $\aca$ to prove that it is also complete for ${\bm \Pi}^1_1$ formulas. The following is a mild modification of the Henkin-Orey $\omega$-completeness theorem \cite{GirardProofTheory,Orey56}.

\begin{theorem}\label{TheoCompTPro}
Let $\phi({\bm z},{\bm X})\in {\bm \Pi}^1_\omega$ be any formula with all free variables among those shown.
Then, for any $\rho\leq\omega$,
\[\aca\vdash \forall \bm X\ \forall \bm z \ \Big (\ModelsPX \rho  T{\bm X}\phi
(\dot{\bm z},\dot{\bm X})\to \ProofPX \rho T{\bm X}\phi (\dot{\bm z},\dot{\bm X}) \Big ).\]
If $\rnk \phi<\rho$, this is already provable in $\eca$.
\end{theorem}

\proof[sketch]
First we work in $\aca$ and consider the case $\rho = 0$.
Reason by contrapositive, and assume that $\ProofPX \rho T{\bm X}\phi (\dot{\bm z},\dot{\bm X})$ fails.
We construct a proof-search tree in such a way that all formulas are eventually analyzed.
To do this, it is convenient to think of sequents as sequences rather than sets.
Going from the root up, we always analyze the first formula, then place it at the end.
For example, if the first formula is a disjunction we would obtain
\[\dfrac{\Delta,\alpha\vee\beta,\alpha,\beta}{\alpha\vee\beta,\Delta}.\]
The next formula to be analyzed going up the tree would be the first formula of $\Delta$.
In the case of an existential quantifier, we put
\[\dfrac{\Delta,\exists x\alpha(x),\alpha(\overline n)}{\exists x \alpha(x),\Delta},\]
where $n$ is the least natural number such that $\alpha(\overline n)$ does not appear in $\Delta$.

Since $\ProofPX \rho T{\bm X}\phi (\dot{\bm z},\dot{\bm X})$ fails, the proof-search tree has an infinite branch, say $B$.
We define a model $\mathfrak M $ by setting $n\in |\mathfrak M|_i$ iff $ \overline n \notin V_i   $ appears in $B$ (recall that $(V_i)_{i<\omega}$ enumerates all second-order variables), and $\psi(\bm C)\in S_\mathfrak M$ iff ${\sim}\psi(\bm V) \in B$.
Note that the construction of $\mathfrak M$ uses ${\bm \Sigma}^0_1$-comprehension.
By the way the proof-search tree was constructed, it is not hard to check that $S_\mathfrak M$ satisfies the Tarski conditions, hence $\mathfrak M$ is an $\omega$-model.

For $\rho>0$, we modify the proof-search tree following a technique found in \cite{GirardII}.
On odd steps, we proceed as in the case for $\rho=0$.
However, on step $2m$, if $m=\ulcorner\psi\urcorner$  for some formula $\psi$ with rank less than $\rho$, we realize the following derivation.
Let $\partial_\psi$ be a standard proof of the $\psi\vee{\sim}\psi$ in the Tait calculus.
Then, continue the proof-search tree by
\[
\dfrac{\dfrac{\partial_\psi}{\psi\vee{\sim}\psi} \ \ \ \ \ \
\begin{array}{c}
\phantom{a}\\
{\sim}\psi\wedge\psi ,\Delta
\end{array}
}{\Delta}.
\]
This ensures that for any $\psi$ of rank less than $\rho$, either $\psi$ or ${\sim}\psi$ will appear in $B$, and hence $\mathfrak M$ will have rank at least $\rho$.

For the second claim we also use the modified proof-search tree.
Note that the proof of $\psi\vee{\sim}\psi$ is elementary in $\psi$ (see e.g.~\cite{Pohlers:2009:PTBook}), so the tree can be constructed in $\eca$.
$\mathfrak M$ can also be constructed in $\eca$, as all formulas appearing in $B$ have rank less than $\rho$, so $\psi$ appears in $B$ if and only if $\psi$ appears in $B$ by stage $2\ulcorner\psi\urcorner$.
That the Tarski conditionals hold can then be checked as above.
\endproof

The following is then immediate from Lemma \ref{LemmCompTreeFix}:

\begin{corollary}\label{CorCompTFix}
For any formula $\phi({\bm z},{\bm X})\in {\bm \Pi}^1_\omega$ and any $\rho\leq\omega$,
\[\aca\vdash \forall \bm X\ \forall \bm z \ \Big (\ModelsPX \rho  T{\bm X}\phi
(\dot{\bm z},\dot{\bm X})\to \FixPX \rho T{\bm X}\phi (\dot{\bm z},\dot{\bm X}) \Big ).\]
If $\rnk\phi<\rho$, the above is already provable in $\eca$.
\end{corollary}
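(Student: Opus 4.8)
The plan is to derive the corollary essentially by chaining the two preceding results, Theorem~\ref{TheoCompTPro} and Lemma~\ref{LemmCompTreeFix}, so that the proof should be a single short paragraph. First I would reason inside $\aca$, fix a set-tuple $\bm A$ and a tuple $\bm n$ of natural numbers, and assume the hypothesis $\ModelsPX \rho T{\bm A}\phi(\dot{\bm n},\bm O)$. Applying Theorem~\ref{TheoCompTPro} to the formula $\phi({\bm X})\in {\bm \Pi}^1_\omega$ at rank $\rho$, we obtain $\ProofPX \rho T{\bm A}\phi(\dot{\bm n},\bm C)$, i.e.\ there is a genuine (well-founded) $\omega$-proof of the relevant sequent from the oracle $\bm A$.

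Next I would feed this into Lemma~\ref{LemmCompTreeFix}, which is stated for $\rho\leq\sigma\leq\omega$ and gives $\aca\vdash\ProofPX \cpar T{\bm A}\vecgamma\rightarrow\FixPX\sigma T{\bm A}\vecgamma$; taking $\sigma=\rho$ and $\vecgamma$ the singleton sequent $\langle\phi(\dot{\bm n},\bm C)\rangle$, this converts the $\omega$-proof into the fixed-point provability assertion $\FixPX \rho T{\bm A}\phi(\dot{\bm n},\bm C)$, which is exactly the conclusion. Since every step is provable in $\aca$ and we have only used universally quantified implications, discharging the assumptions on $\bm A$ and $\bm n$ gives the displayed $\aca$-provable sentence.

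There is essentially no obstacle here: the real work has already been done in Theorem~\ref{TheoCompTPro} (the Henkin--Orey completeness theorem, whose proof requires $\aca$) and in Lemma~\ref{LemmCompTreeFix} (the easy observation, using arithmetical comprehension to form the set $S'$ of nodes whose labels escape an arbitrary SPC). The only point requiring a moment's care is bookkeeping of the rank parameters and the identification of the formula $\phi(\dot{\bm n},\bm C)$ with the singleton sequent $\langle\phi(\dot{\bm n},\bm C)\rangle$, matching the conventions fixed just before the definition of $\FixP{}{}$. Accordingly I expect the written proof to read simply: ``Immediate from Theorem~\ref{TheoCompTPro} and Lemma~\ref{LemmCompTreeFix}.''
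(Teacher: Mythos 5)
Your proposal is correct and is exactly the paper's argument: the corollary is obtained by chaining Theorem~\ref{TheoCompTPro} with Lemma~\ref{LemmCompTreeFix} (with $\sigma=\rho$), and the paper indeed records it as immediate from these two results.
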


For formulas of relatively low complexity, we can replace $\ModelsPX \rho  T{\bm A}\phi$ by $\phi$:

\begin{corollary}\label{CorComplete}\
Let $\rho\leq\omega$.

\begin{enumerate}

\item\label{ItCompleteOne} Given $\phi(\varThree,\VarOne)\in{\bm \Pi}^1_1$ with all free variables shown,
\[\aca \vdash \forall {{\bm X}} \, \forall \bm z \, \Big ( \phi(\bm z, {{\bm X}}) \to \ \FixPX\rho T{{{\ConOne}}} \phi(\dot{\bm z}, {{\dot{\bm X}}}) \Big ).\]

\item\label{ItCompleteTwo} Given $\phi(\varThree,\VarOne)\in{\bm \Sigma}^1_2$ with all free variables shown,
\[\aca \vdash \forall {{\bm X}} \, \forall \bm z \, \Big ( \phi(\bm z, {{\bm  X}}) \to \exists Y \ \FixPX\rho T{{{\bm X}},Y} \phi(\dot{\bm z}, {{\dot{\bm X}}}, \dot Y) \Big ) .\]

\end{enumerate}
If $\rnk\phi<\rho$, the above are already provable in $\eca.$
\end{corollary}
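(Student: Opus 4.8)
The plan is to reduce both items to Corollary~\ref{CorCompTFix} by eliminating the $\omega$-model quantifier, using Lemma~\ref{LemModSound} for the $\Pi^1_1$ case and a one-step unwinding of the leading existential quantifier for the $\Sigma^1_2$ case. For item~\ref{ItCompleteOne}: reason in $\aca$, fix ${\ConOne}$ and $\varOne$, and assume $\phi(\varOne,{\ConOne})$ with $\phi\in{\bm \Pi}^1_1$. By Lemma~\ref{LemModSound} we obtain $\ModelsPX\rho T{\ConOne}\phi(\dot\varOne,{\bm C})$, i.e.\ $\phi$ (with the appropriate substitutions) holds in every $\Gamma$-valued $\omega$-model of $T$ of rank at least $\rho$ whose first $a$ sets are ${\ConOne}$. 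Then Corollary~\ref{CorCompTFix} gives $\FixPX\rho T{\ConOne}\phi(\dot\varOne,{\bm O})$, which is exactly the conclusion. One should check that the free-variable bookkeeping matches: Lemma~\ref{LemModSound} is stated for $\phi(\bm z,\bm X)\in\Pi^1_1$ with all free variables shown, and Corollary~\ref{CorCompTFix} for arbitrary $\phi(\bm X)\in{\bm \Pi}^1_\omega$, so the composition is legitimate once we identify the oracle constants ${\bm O}_{<a}$ with the displayed set variables.

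For item~\ref{ItCompleteTwo}: write $\phi(\varThree,\VarOne)\equiv\exists Z\ \psi(Z,\varThree,\VarOne)$ with $\psi\in{\bm \Pi}^1_1$. Reasoning in $\aca$, fix ${\ConOne}$ and $\varOne$ and assume $\phi(\varOne,{\ConOne})$; then there is a witness set $B$ with $\psi(B,\varOne,{\ConOne})$. Apply item~\ref{ItCompleteOne} to the $\Pi^1_1$ formula $\psi$ and the extended oracle tuple ${\ConOne},B$ (so that $B$ is fed in as a fresh oracle constant $O_a$): this yields $\FixPX\rho T{{\ConOne},B}\psi(\dot O_a,\dot\varOne,{\bm O})$, where $\dot O_a$ denotes the set-constant slot occupied by $B$. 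Since $P$ is closed under the existential introduction rule $(\exists^1)$ of the Tait calculus, any SPC for $T|{\ConOne},B$ containing $\psi(O_a,\bar\varOne,{\bm O}_{<a})$ also contains $\exists Z\ \psi(Z,\bar\varOne,{\bm O}_{<a})\equiv\phi(\bar\varOne,{\bm O}_{<a})$; hence $\FixPX\rho T{{\ConOne},B}\phi(\dot\varOne,{\bm O})$, and therefore $\exists B\ \FixPX\rho T{{\ConOne},B}\phi(\dot\varOne,{\bm O})$ as required. (Alternatively, one can invoke Corollary~\ref{CorCompTFix} directly for $\phi$ using the trivial model-soundness fact that $\psi(B,\varOne,{\ConOne})$ forces $\ModelsPX\rho T{{\ConOne},B}\exists Z\,\psi$, by interpreting $Z$ as the constant naming $B$.)

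The only genuinely delicate point is the interaction between the oracle-constant machinery and the syntactic substitution operator $\phi(\dot{\varOne},{\bm O})$: one must be careful that when $B$ is adjoined to the oracle tuple the encoding ${\bm A}={\ConOne},B$ is formed correctly (as in \S\ref{SubsOr}), that the rules $(\mbox{\sc o}_\in),(\mbox{\sc o}_{\not\in})$ for the new constant $O_a$ are indeed present, and that $\phi(\dot\varOne,{\bm O})$ in the conclusion refers to the original tuple ${\bm O}_{<a}$ while the witness occupies the $O_a$ slot. All of this is routine given the conventions already fixed, and since $\aca$ is finitely axiomatizable the appeals to Corollary~\ref{CorCompTFix} and Lemma~\ref{LemModSound} are unproblematic; I do not expect any real obstacle beyond this bookkeeping.
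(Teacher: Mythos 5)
Your proposal is correct and follows essentially the same route as the paper: item~\ref{ItCompleteOne} is obtained by composing Lemma~\ref{LemModSound} with Corollary~\ref{CorCompTFix}, and item~\ref{ItCompleteTwo} by fixing a witness $B$ for the leading existential, adjoining it to the oracle tuple, applying the first item to the ${\bm \Pi}^1_1$ matrix, and closing under second-order existential introduction inside the SPC. The bookkeeping caveats you flag (the encoding of ${\ConOne},B$ and reading $\bar B$ as the constant $O_a$) are exactly the points the paper also treats as routine.
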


\begin{proof}
The first clam is immediate from Lemma \ref{LemModSound} and Corollary \ref{CorCompTFix}. For the second, suppose that $\phi(\bm z,\bm X)=\exists Y\psi(\bm z,{{\bm X}},Y)$, with $\psi\in \Pi^1_1({{\bm X}},Y)$. Then, if $\phi(\bm z,\bm X)$ holds we can fix $Y_0$ so that $\psi(\bm z,{{\bm X}},Y_0)$ is the case, and we may use the first claim to conclude that $\FixPX\rho T{{{\ConOne}},B} \psi(\bar{\bm z}, {{\dot{\bm X}}},\dot Y_0)$. By existential introduction we have $\FixPX\rho T{{{\ConOne}},B} \phi(\bar{\bm z},{{\dot{\bm X}}}, \dot Y_0)$.
\end{proof}

\subsection{Provable equivalences between formalizaitons}

The various formalizations of $\omega$-logic we are considering are equivalent.
In this section we will discuss in which theories the various equivalences can be proven.
Below, recall that the {\em Kleene-Brouwer ordering,} which we denote $\unlhd$, is defined on $\mathbb N^{< \omega}$ by setting $\bm s\unlhd\bm t$ if one of the following occurs:
\begin{enumerate*}[label=(\alph*)]
\item $\bm t\peq\bm s$, or

\item $\bm s,\bm t$ are incomparable under $\peq$, and for the least $i$ such that $ ({\bm s})_i\not= ({\bm t}) _i$, we have that $({\bm s})_i\leq  ({\bm t}) _i$.

\end{enumerate*}
It is provable in $\aca$ that $\unlhd$ is a well-order on $S$ whenever $S$ is well-founded \cite{Simpson:2009:SubsystemsOfSecondOrderArithmetic}.

\begin{theorem}\label{TheoEquivATR}
Let $T$ be any theory and $\rho \leq \omega$.

\begin{enumerate}

\item $\aca$ proves that, for every set-tuple $\bm A$ and every formula $\phi$, $\ProofPX \rho T {\bm A}\phi (\dot{\bm A})$, $\ModelsPX \rho T {\bm A}\phi (\dot{\bm A})$, and $\ClosedPX \rho T {\bm A}\phi (\dot{\bm A})$ are equivalent.
These equivalences are provable in $\eca$ when $\rnk\phi<\rho$.

\item $\atr$ proves that for every set-tuple $\bm A$ and every formula $\phi$, the above are also equivalent to $\RecPX {\omega} T {\bm A}\phi (\dot{\bm A})$.

\item $\pica$ proves that the above notions are morover equivalent to $\FixPX {} T {\bm A}\phi (\dot{\bm A})$.

\end{enumerate}

\end{theorem}

\proof
That $\ModelsPX \rho T{\bm A} \vecgamma (\dot{\bm A})$ implies
$\ProofPX \rho T {\bm A} \vecgamma (\dot{\bm A})$ is Theorem \ref{TheoCompTPro}.
That $\ProofPX \rho T {\bm A} \vecgamma (\dot{\bm A})$ implies $\ClosedPX \rho T {\bm A} \vecgamma (\dot{\bm A})$ and $\ClosedPX \rho T {\bm A} \vecgamma (\dot{\bm A})$ implies $\ModelsPX \rho T {\bm A} \vecgamma (\dot{\bm A})$ is Lemma \ref{LemmCompClosOthers}.
The third item then follows easily using Lemmas \ref{LemmCompTreeFix} and \ref{LemmOmSoundACA}.

For the second item, reason in $\atr$.
By Lemma \ref{LemModPar}.\ref{itModParATR}, $\ModelsPX {\rho} T {\bm A} \vecgamma (\dot{\bm A})$ is equivalent to $\ModelsPX { } T {\bm A} \vecgamma (\dot{\bm A})$.
That $\RecPX{ }T{\bm A}\vecgamma  (\dot{\bm A})$ implies $\ModelsPX { } T {\bm A} \vecgamma (\dot{\bm A})$ is proven in \cite{Cordon2017}, hence also $\RecPX{ }T{\bm A}\vecgamma  (\dot{\bm A})$ implies $\ModelsPX {\rho} T {\bm A} \vecgamma (\dot{\bm A})$.
Thus it remains to show that $\ProofPX \rho T {\bm A} \vecgamma (\dot{\bm A})$ implies $\RecPX{ }T{\bm A}\vecgamma  (\dot{\bm A})$.
Reasoning in $\atr$, suppose that $\langle S,L\rangle$ is an $\omega$-proof of $\phi$.
Let $\unlhd$ be the Kleene-Brouwer ordering. 
Since $S$ is well-founded, $\unlhd$ is a well-order on $S$.
Using arithmetical transfinite recursion, let $P$ be an IPC for $T|{\bm A}$ along $\langle S,{\unlhd}\rangle$. A straightforward transfinite induction along $\unlhd$ shows that, for all $\bm s\in S$, $\bigvee L(\bm s)\in P_{\bm s}$; in particular, $\phi \in P_{\langle\rangle}$. Since $P$ was arbitrary, we conclude that $\RecPX{}T{\bm A}\phi (\dot{\bm A})$.
\endproof

Note that as a special case of the first item, $\ProofPX {} T {\bm A}\phi (\dot{\bm A})$, $\ModelsPX {} T {\bm A}\phi (\dot{\bm A})$, and $\ClosedPX {} T {\bm A}\phi (\dot{\bm A})$ are provably equivalent in $\eca$.

\subsection{Predicative reflection and transfinite induction}

The above equivalences allow us to prove variants of Theorems \ref{TheoArai} and \ref{TheoModRFN} in terms of $\omega_\clsd$-reflection.

\begin{theorem} \

\begin{enumerate}

\item $\rca + \PRFN\clsd {}{{{\bm \Pi}^1_\omega}} \equiv {{\bm \Pi}^1_\omega}\text{-}\piti.$

\item For any $0<n\leq \omega$,
\[\aca+ \PRFN\clsd 0 {{\bm \Sigma}^1_{1+n}}[\aca]  \equiv {{\bm \Pi}^1_n}\text{-}\piti.\]

\end{enumerate}
\end{theorem}

\proof
We prove the first claim, as the proof of the second is analogous.
By Theorem \ref{TheoArai} we have that ${{\bm \Pi}^1_\omega}\text{-}\piti $ is equivalent to $\rca + \PRFN\tre {}{{{\bm \Pi}^1_\omega}}$.
It follows from Lemma \ref{LemmCompClosOthers} that
\[  \rca + \PRFN\clsd {}{{{\bm \Pi}^1_\omega}}\supseteq \rca + \PRFN\tre {}{{{\bm \Pi}^1_\omega}} \equiv {{\bm \Pi}^1_\omega}\text{-}\piti. \]
Meanwhile, by Proposition \ref{propRelBetTheos}, $\aca \subseteq {{\bm \Pi}^1_\omega}\text{-}\piti $, while by Theorem \ref{TheoEquivATR}, $\ProofPX \rho  T{\bm A}\phi
(\dot{\bm n},\dot{\bm A})$ and $\ClosedPX \rho T{\bm A}\phi (\dot{\bm n},\dot{\bm A})$ are provably equivalent in $\aca$, and therefore
\[{{\bm \Pi}^1_\omega}\text{-}\piti \equiv \aca + \PRFN\tre {}{{{\bm \Pi}^1_\omega}} \supseteq \rca + \PRFN\clsd {}{{{\bm \Pi}^1_\omega}}.\]
\endproof

We may also extend the results of \cite{Cordon2017} to reflection over formulas of higher complexity.

\begin{theorem}\label{TheoATRBi}
Let $U$ be a theory such that $\eca\subseteq U\subseteq \atr$. Then, for any $n\leq \omega$,
\begin{equation}\label{EqATRBI}
\atr +{\bm \Pi}^1_{n}\text{-}{\rm TI} \equiv U +\PRFN\ite{}{{{\bm \Sigma}^1_{1+n}}} [\aca] .
\end{equation}
\end{theorem}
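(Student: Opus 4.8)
The plan is to prove the two inclusions of \eqref{EqATRBI} separately, mirroring the structure of Theorem \ref{TheoATR} but keeping track of how the complexity class ${\bm \Sigma}^1_{1+n}$ interacts with bar induction. For the direction $U +\PRFN\ite{}{{{\bm \Sigma}^1_{1+n}}}[\aca] \vdash \atr +{\bm \Pi}^1_{n}\text{-}{\tt BI}$, I would first argue that the left-hand theory proves $\atr$: this already follows from Theorem \ref{TheoATR} applied with $\Gamma = \{{\tt 0=1}\}$ (or any $\Gamma\subseteq{\bm \Pi}^1_2$ containing it), since ${\bm \Sigma}^1_{1+n}$ contains ${\tt 0=1}$ and extends the hypotheses of that theorem for $n\geq 1$; for $n=0$ one uses the original statement directly. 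Once $\atr$ is available, I would then derive ${\bm \Pi}^1_n\text{-}{\tt BI}$ by combining the reflection principle $\PRFN\ite{}{{{\bm \Sigma}^1_{1+n}}}[\aca]$ with Lemma~\ref{LemmEquivATR} (to pass freely between $\RecPX{}{\aca}{\bm A}\phi$ and $\ModelsPX{}{\aca}{\bm A}\phi$ over $\atr$) and then invoking Theorem~\ref{TheoModRFN}, which gives $\aca + \PRFN\mo\rho{{\bm \Sigma}^1_{1+n}}[\aca]\equiv{\bm \Pi}^1_n\text{-}\piti$. The point is that over $\atr$ the two reflection schemes $\PRFN\ite{}{\Gamma}[\aca]$ and $\PRFN\mo{}{\Gamma}[\aca]$ coincide, because the provability predicates coincide by Lemma~\ref{LemmEquivATR}.

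For the converse direction $\atr +{\bm \Pi}^1_{n}\text{-}{\tt BI} \vdash U +\PRFN\ite{}{{{\bm \Sigma}^1_{1+n}}}[\aca]$, since $U\subseteq\atr$ it suffices to prove the reflection scheme inside $\atr +{\bm \Pi}^1_n\text{-}{\tt BI}$. Here I would reason as follows: fix $\phi\in{\bm \Sigma}^1_{1+n}$ and assume $\RecPX{}{\aca}{\bm A}\phi(\dot{\bm n},{\bm O})$, i.e.\ there is a well-order $\Lambda$ and an iterated provability class $P$ along $\Lambda$ with $\phi\in P_\lambda$ for some $\lambda\in|\Lambda|$. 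By Lemma~\ref{LemmEquivATR} (working in $\atr\subseteq\atr+{\bm \Pi}^1_n\text{-}{\tt BI}$) this is equivalent to $\ModelsPX{}{\aca}{\bm A}\phi(\dot{\bm n},{\bm C})$. Now I invoke Theorem~\ref{TheoModRFN} in the form $\aca + \PRFN\mo\rho{{\bm \Sigma}^1_{1+n}}[\aca]\equiv{\bm \Pi}^1_n\text{-}\piti$: since ${\bm \Pi}^1_n\text{-}\piti$ is $\aca$ together with transfinite induction for ${\bm \Pi}^1_n$ formulas along well-orders, and ${\bm \Pi}^1_n\text{-}{\tt BI}$ over $\atr$ (which contains $\aca$) yields exactly ${\bm \Pi}^1_n\text{-}\piti$, we obtain $\PRFN\mo\rho{{\bm \Sigma}^1_{1+n}}[\aca]$, hence $\phi({\bm n},{\bm A})$ holds. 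One should double-check the boundary case $n=\omega$, where ${\bm \Pi}^1_\omega\text{-}\piti$ is not contained in $\atr$: there the equivalence still goes through since $\atr+{\bm \Pi}^1_\omega\text{-}{\tt BI}$ is at least as strong as ${\bm \Pi}^1_\omega\text{-}\piti$, and Theorem~\ref{TheoModRFN} is stated for $0<n\leq\omega$.

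The main obstacle I anticipate is making the reduction ``${\bm \Pi}^1_n\text{-}{\tt BI}$ over $\atr$ equals ${\bm \Pi}^1_n\text{-}\piti$'' precise and bidirectional at the level of schemes, and more importantly verifying that Lemma~\ref{LemmEquivATR} can be applied \emph{uniformly} — i.e.\ that the equivalence of $\RecPX{}{\aca}{}{}$ and $\ModelsPX{}{\aca}{}{}$ holds as a scheme over $\atr$ for formulas of the relevant complexity, not merely for a fixed $\phi$. A secondary technical point is bookkeeping the rank parameter $\rho$ (the rank of the chosen finite axiomatization of $\aca$ in Theorem~\ref{TheoModRFN}) and confirming it is irrelevant here because, by Lemma~\ref{LemmPMonotone} and the analogous monotonicity for $\RecPX{}{}{}{}$ and $\ModelsPX{}{}{}{}$ (Lemma~\ref{LemModPar}), increasing $\rho$ only weakens the reflection hypothesis, so we may freely take $\rho=\omega$ on both sides. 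Once these uniformity and bookkeeping issues are settled, the two inclusions close the argument. $\square$
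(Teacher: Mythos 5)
Your overall route is the same as the paper's: get $\atr$ from Theorem \ref{TheoATR}, pass between $\RecPX{}{\aca}{\bm A}$ and $\ModelsPX{}{\aca}{\bm A}$ via Lemma \ref{LemmEquivATR} over $\atr$, and then trade $\omega$-model reflection for ${\bm \Pi}^1_n$-{\tt BI} via Theorem \ref{TheoModRFN}. The forward direction is fine as you state it: from $\PRFN\ite{}{{{\bm \Sigma}^1_{1+n}}}[\aca]$ you get $\PRFN\mo{}{{{\bm \Sigma}^1_{1+n}}}[\aca]$ (full models), and since Lemma \ref{LemModPar} gives $\ModelsPX{\rho}{T}{\bm A}\phi\to\ModelsPX{\sigma}{T}{\bm A}\phi$ for $\rho\leq\sigma$, the reflection schemes are monotone the other way, i.e. $\PRFN\mo{\sigma}{\Gamma}[\aca]\vdash\PRFN\mo{\rho}{\Gamma}[\aca]$ for $\sigma\geq\rho$; so full-model reflection implies the rank-$\rho$ reflection that Theorem \ref{TheoModRFN} consumes.

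The gap is exactly the rank bookkeeping you dismiss as secondary, and your resolution of it is backwards. In the converse direction, $\atr+{\bm \Pi}^1_n\text{-}{\tt BI}$ together with Theorem \ref{TheoModRFN} only yields $\PRFN\mo{\rho}{{{\bm \Sigma}^1_{1+n}}}[\aca]$ for the \emph{finite} rank $\rho$ of the chosen axiomatization of $\aca$, whereas what you obtain from $\RecPX{}{\aca}{\bm A}\phi$ via Lemma \ref{LemmEquivATR} is $\ModelsPX{}{\aca}{\bm A}\phi$, i.e.\ truth of $\phi$ in all \emph{full} models — by Lemma \ref{LemModPar} a strictly weaker hypothesis than the one $\PRFN\mo{\rho}{\cdot}[\aca]$ requires. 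So ``hence $\phi({\bm n},{\bm A})$ holds'' does not follow, and the claim that ``increasing $\rho$ only weakens the reflection hypothesis, so we may freely take $\rho=\omega$ on both sides'' asserts precisely the unavailable implication $\PRFN\mo{\rho}{\Gamma}[\aca]\vdash\PRFN\mo{}{\Gamma}[\aca]$; the monotonicity lemmas give only the converse. The missing ingredient, which the paper invokes at exactly this point, is that $\atr$ proves every satisfaction class extends to a full satisfaction class; over $\atr$ this upgrades rank-$\rho$ model reflection to full-model reflection and closes the converse direction. With that fact inserted (and the erroneous monotonicity remark removed), your argument coincides with the paper's proof; your handling of the $n=0$ boundary case and of Theorem \ref{TheoATR} to secure $\atr$ is consistent with what the paper does.
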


\proof
The case for $n=0$ follows from Theorem \ref{TheoATR} in view of the fact that $\atr\vdash {\bm \Pi}^1_{0}\text{-}{\rm TI}$, so we assume $n>0$. Let $R\equiv U +\PRFN\ite{}{{{\bm \Sigma}^1_{1+n}}} [\aca]$.
Note that by Theorem \ref{TheoATR}, $\atr\subseteq R$, and hence $R\equiv\atr+\PRFN\ite{}{{{\bm \Sigma}^1_{1+n}}} [\aca]$. But, in view of Theorem \ref{TheoEquivATR},
\[R\equiv \atr+ \PRFN\mo{}{{{\bm \Sigma}^1_{1+n}}} [\aca]\equiv \atr+ \PRFN\mo{0}{{{\bm \Sigma}^1_{1+n}}}[\aca],\]
where the second equivalence is due to the fact that $\atr$ proves that any satisfaction class extends to a full satisfaction class.
By Theorem \ref{TheoModRFN},
\[\atr+ \PRFN\mo{0}{{{\bm \Sigma}^1_{1+n}}}[\aca]\equiv \atr+{\bm\Pi}^1_n\text{-}{\rm TI},\]
as needed.
\endproof

In view of Proposition \ref{propRelBetTheos}, it follows that Theorem \ref{TheoATR} is sharp:

\begin{corollary}
$\atr\not\vdash \PRFN\ite{}{{\bm \Sigma}^1_{2}}[\aca]$.
\end{corollary}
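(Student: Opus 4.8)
The plan is to derive the non-derivability result as a direct combination of Theorem~\ref{TheoATRBi} with the separation stated in Lemma~\ref{LemmATRlessBI}. Setting $n=1$ in Theorem~\ref{TheoATRBi} (with, say, $U\equiv\atr$, which satisfies $\eca\subseteq U\subseteq\atr$) yields
\[
\atr+{\bm\Pi}^1_1\text{-}{\tt BI}\ \equiv\ \atr+\PRFN\ite{}{{\bm\Sigma}^1_2}[\aca].
\]
So if $\atr$ proved $\PRFN\ite{}{{\bm\Sigma}^1_2}[\aca]$, the right-hand side would collapse to $\atr$, and hence $\atr$ would prove every theorem of $\atr+{\bm\Pi}^1_1\text{-}\piti$, i.e.\ ${\bm\Pi}^1_1\text{-}\piti\subseteq\atr$. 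But Lemma~\ref{LemmATRlessBI} asserts precisely that ${\bm\Pi}^1_1\text{-}\piti\not\subseteq\atr$, a contradiction.

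Concretely, the steps I would carry out are: first, instantiate Theorem~\ref{TheoATRBi} at $n=1$ to obtain the displayed equivalence; second, observe that $\PRFN\ite{}{{\bm\Sigma}^1_2}[\aca]$ is (by definition) a set of sentences in the language of second-order arithmetic, so ``$\atr\vdash\PRFN\ite{}{{\bm\Sigma}^1_2}[\aca]$'' would mean $\atr$ proves each of these sentences, whence $\atr+\PRFN\ite{}{{\bm\Sigma}^1_2}[\aca]$ and $\atr$ have the same theorems; third, combine with the equivalence to get ${\bm\Pi}^1_1\text{-}\piti\subseteq\atr$; fourth, invoke Lemma~\ref{LemmATRlessBI} for the contradiction. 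One should also note $\atr\vdash{\bm\Pi}^1_0\text{-}{\tt BI}$ is not needed here since we are in the case $n=1$, where the extra induction axioms are genuinely new.

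I do not anticipate a real obstacle: the argument is a one-line consequence of results already established in the excerpt, and the only thing to be careful about is the bookkeeping of complexity indices — namely that ${\bm\Sigma}^1_{1+n}$ with $n=1$ is ${\bm\Sigma}^1_2$ and ${\bm\Pi}^1_n$ with $n=1$ is ${\bm\Pi}^1_1$, so that the instance of Theorem~\ref{TheoATRBi} matches exactly the complexity class appearing in the corollary and the theory ${\bm\Pi}^1_1\text{-}\piti$ appearing in Lemma~\ref{LemmATRlessBI}. The remark after Lemma~\ref{LemmATRlessBI} (that ${\bm\Pi}^1_1\text{-}\piti$ is incomparable with $\atr$, in particular not contained in it) is what makes the final step airtight.
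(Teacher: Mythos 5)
Your argument is correct and is exactly the paper's intended proof: the corollary is stated as an immediate consequence of Theorem~\ref{TheoATRBi} (instantiated at $n=1$, so that ${\bm\Sigma}^1_{1+n}={\bm\Sigma}^1_2$ and the right-hand theory is $\atr+{\bm\Pi}^1_1\text{-}\piti$) together with Lemma~\ref{LemmATRlessBI}, which gives ${\bm\Pi}^1_1\text{-}\piti\not\subseteq\atr$. Your bookkeeping of the indices and the choice $U\equiv\atr$ are both fine, so there is nothing to add.
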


\begin{remark}
We could instead use Theorem \ref{TheoArai} to obtain a variant of Theorem \ref{TheoATRBi} with the pure Tait calculus in place of $\aca$. For greater generality, it may be of interest to analyze the proof in \cite{Jager1999} to identify the minimal requirements on a theory $T$ which would allow us to replace $\aca$ by $T$.
\end{remark}

\section{Consistency and reflection using inductive definitions}\label{section:OracleConsistencyAndOracleReflection}

In this section we will define the notions of reflection and consistency that naturally correspond to $\FixPX \rho T{\bm A}$. Moreover, we will link the two notions to each other and see how they relate to comprehension. Below, recall that $\bot$ denotes the empty sequent.

\begin{definition} 
Given a theory $T$, $\rho\leq\omega$, and a set of formulas $\Phi$, we define the schemas
\begin{align*}
\PRFN\fx\rho \Phi [T]&=\forall {{\ConOne}} \, \forall {{\varOne}}\ \Big(\, \FixP\rho T \, \phi(\dot{{\varOne}},\dot {\ConOne}) \to \phi({{\varOne}},{{\ConOne}}) \Big ),\\
\PCONS\fx\rho \Phi [T]&=\forall {{\ConOne}} \, \forall {\varOne}\ \Neg \Big(\, \FixP\rho T  \, \phi (\dot{{\varOne}},\dot {\ConOne}) \wedge \FixP \rho T  \, \Neg \phi(\dot{{\varOne}},\dot {\ConOne}) \Big ),\\
\PCons\fx\rho[T]&=\forall {\ConOne} \Neg \FixP\rho T   \bot ( \dot {\ConOne} ),
\end{align*}
for $\phi({{\varThree}},{{\VarOne}}) \in \Phi$ with all free variables among those shown.
\end{definition}

\begin{lemma}\label{LemmEquivRFN} Given any theory $T$ and set of formulas $\Phi$,
\begin{enumerate}

\item if $\rho\leq \omega$, $\aca+\PRFN\fx \rho \Phi [T]\vdash \PRFN \mo \rho \Phi [T]$;

\item if $\rho\leq \omega,$ $\pica + \PRFN \fx \rho \Phi \equiv \pica + \PRFN\mo \rho \Phi [T].$

\end{enumerate}

\end{lemma}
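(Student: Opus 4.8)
The plan is to derive both items by chaining the completeness of formalized $\omega$-logic for $\omega$-models with the corresponding reflection scheme, the only delicate point being which base theory each direction needs. Completeness, $\ModelsPX\rho T{\ConOne}\phi\to\FixPX\rho T{\ConOne}\phi$, is available already over $\aca$ by Corollary~\ref{CorCompTFix} (which rests on Theorem~\ref{TheoCompTPro}), whereas the converse passage $\FixPX\rho T{\ConOne}\phi\to\ModelsPX\rho T{\ConOne}\phi$ uses that a saturated provability class exists and is therefore only available over $\pica$, this being precisely the content of Lemma~\ref{LemmOmSoundACA}.\ref{ItOnSOundTwo}.

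For the first item I would reason inside $\aca+\PRFN\fx\rho\Gamma[T]$. Fix $\phi(\varThree,\VarOne)\in\Gamma$ with all free variables shown, an $a$-tuple $\ConOne$, and a tuple $\varOne$ of numbers, and assume $\ModelsPX\rho T{\ConOne}\phi(\dot\varOne,{\bm O})$, i.e.\ that $\phi$ holds in every $\Gamma$-valued $\omega$-model of $T$ of rank at least $\rho$ with $|\model|_{<a}=\ConOne$. By Corollary~\ref{CorCompTFix} this yields $\FixPX\rho T{\ConOne}\phi(\dot\varOne,{\bm O})$, and then the instance of $\PRFN\fx\rho\Gamma[T]$ for $\phi$ gives $\phi(\varOne,\ConOne)$. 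As $\phi$, $\ConOne$ and $\varOne$ were arbitrary, this establishes $\PRFN\mo\rho\Gamma[T]$.

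For the second item one inclusion is immediate: since $\aca\subseteq\pica$, the first item already gives $\pica+\PRFN\fx\rho\Gamma[T]\vdash\PRFN\mo\rho\Gamma[T]$. For the reverse inclusion I would argue in $\pica+\PRFN\mo\rho\Gamma[T]$: given $\phi\in\Gamma$, $\ConOne$ and $\varOne$ with $\FixPX\rho T{\ConOne}\phi(\dot\varOne,{\bm O})$, Lemma~\ref{LemmOmSoundACA}.\ref{ItOnSOundTwo} (which is valid in $\pica$) yields $\ModelsPX\rho T{\ConOne}\phi(\dot\varOne,{\bm O})$, and then $\PRFN\mo\rho\Gamma[T]$ gives $\phi(\varOne,\ConOne)$; hence $\PRFN\fx\rho\Gamma[T]$ holds, and the two inclusions together give the equivalence.

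I do not expect a real obstacle here: the whole argument is a two-step chain through results proven earlier, so the substance is already done. The one point that deserves care is the asymmetry in base theories just mentioned --- completeness of $\omega$-logic for $\omega$-models is ``arithmetical enough'' to go through over $\aca$, but recovering truth in an $\omega$-model from membership in every saturated provability class needs such a class to exist, which is exactly the impredicative comprehension furnished by $\pica$; this is why the equivalence in the second item is stated over $\pica$ and not over $\aca$.
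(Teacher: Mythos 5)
Your proposal is correct and follows essentially the same route as the paper: the first item chains Corollary \ref{CorCompTFix} with $\PRFN\fx\rho\Gamma[T]$ over $\aca$, and the second combines the first item with Lemma \ref{LemmOmSoundACA}.\ref{ItOnSOundTwo} and $\PRFN\mo\rho\Gamma[T]$ over $\pica$, exactly as in the paper's (more terse) argument. Your remark on why the soundness direction needs $\pica$ --- the existence of a saturated provability class --- matches the paper's reasoning as well.
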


\proof
For the first claim, reason in $\aca+\PRFN\fx \rho \Phi [T]$. Suppose that $\phi\in \Phi $ and $\ModelsP \rho T\phi(\bar{\bm z},\dot {\bm X})$. Then, by Corollary \ref{CorCompTFix}, $\Prop \rho\fx T\phi(\bar {\bm z},\dot{\bm X})$, and thus $\phi(\bm z,\bm X)$ holds by $\PRFN \fx \rho \Phi$. For the second claim, the remaining inclusion follows from Lemma \ref{LemmOmSoundACA}.
\endproof

Of course, the schema $\PCONS\fx\rho\Gamma[T]$ is only interesting when $\rho<\omega$, since otherwise it is just equivalent to consistency.

\begin{lemma}\label{LemmConsImpCONS}
If $T$ is any theory and $\rho\leq\omega$, then
\[\eca + \PCONS\fx\rho{{{\bm \Pi}^1_\omega}}[T]\subseteq
\eca+\PCons\fx\omega[T].\]
\end{lemma}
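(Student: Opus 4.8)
The plan is to show that the $\omega$-iterated consistency $\PCons\fx\omega[T]$ implies each instance of $\PCONS\fx\rho{{{\bm \Pi}^1_\omega}}[T]$, for any $\rho\leq\omega$, reasoning over $\eca$. Fix a formula $\phi$, a tuple of oracles ${\ConOne}$, and a tuple $\varOne$ of numbers; suppose toward a contradiction that both $\FixPX\rho T {{\ConOne}} \, \phi(\dot{{\varOne}},{\bm O})$ and $\FixPX\rho T {{\ConOne}} \, \Neg\phi(\dot{{\varOne}},{\bm O})$ hold. The key observation is that from a $\rho$-SPC we can derive a contradiction using the {\sc cut} rule, but since cuts are only available for formulas of rank less than $\rho$, we cannot directly cut on $\phi$ unless we first pass to rank $\omega$, where all cuts are permitted.

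The first step is a monotonicity-in-rank lemma: if there exists an SPC for $T|{\ConOne}$ of rank $\rho$, then $\FixPX\rho T {{\ConOne}}\vecgamma$ implies $\FixPX\omega T {{\ConOne}}\vecgamma$. This should follow by the same minimality argument used in Lemma \ref{OracleMonotonicity} and Lemma \ref{LemmCompTreeFix}: an $\omega$-SPC $Q$ is closed under all the rules and axioms defining a $\rho$-SPC $P$ (the rules of $\taitc^\rho$ are a subset of those of $\taitc^\omega=\taitc$, and the oracle and $\omega$-rule clauses are identical), so $P\subseteq Q$ by minimality of $P$. The only subtlety is the existence of a $\rho$-SPC; but by Lemma \ref{LemmExistSPC}, the \emph{failure} of $\FixPX\rho T{{\ConOne}} \bot$ already yields one, and in any case, if $\FixPX\rho T{{\ConOne}}\bot$ held we would be done immediately (the empty sequent being derivable trivializes everything and contradicts $\PCons\fx\omega[T]$ via the same monotonicity argument applied to $\bot$). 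So in the contradiction hypothesis we may freely assume a $\rho$-SPC exists.

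The second step is to derive $\FixPX\omega T{{\ConOne}}\bot$ from the two $\omega$-provability assumptions. Let $Q$ be an arbitrary $\omega$-SPC for $T|{\ConOne}$; by the previous step $\phi(\bar{\varOne},{\bm O})\in Q$ and $\Neg\phi(\bar{\varOne},{\bm O})\in Q$, where we identify each formula with its singleton sequent. Since $Q$ is closed under the rules of $\taitc^\omega$ applied to its elements — in particular under {\sc cut}, which at rank $\omega$ has no rank restriction — and the premises $\langle\phi(\bar{\varOne},{\bm O})\rangle$ and $\langle\Neg\phi(\bar{\varOne},{\bm O})\rangle$ witness an instance of {\sc cut} with conclusion the empty sequent $\bot$, we get $\bot\in Q$. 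As $Q$ was arbitrary, $\FixPX\omega T{{\ConOne}}\bot$, i.e.\ the negation of the corresponding instance of $\PCons\fx\omega[T]$, a contradiction. Since ${\ConOne},\varOne,\phi$ were arbitrary, this establishes every instance of $\PCONS\fx\rho{{{\bm \Pi}^1_\omega}}[T]$ over $\eca+\PCons\fx\omega[T]$.

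The main obstacle I expect is bookkeeping around the rank parameter: one must check carefully that the formalized {\sc cut} rule $\RulePred^\omega$ genuinely has no rank side-condition (so that the step ``$\phi,\Neg\phi\in Q$ implies $\bot\in Q$'' is licensed by closure of an $\omega$-SPC under $\RulePred^\omega$), and that the sequent-versus-formula identification in the definition of $\FixPX\rho T{{\ConOne}}$ is handled so that cutting the two singleton sequents really produces the empty sequent $\bot$ and not some artifact of the coding. Everything else — the monotonicity argument and the handling of the degenerate case $\FixPX\rho T{{\ConOne}}\bot$ — is routine and parallels arguments already given for Lemmas \ref{OracleMonotonicity} and \ref{LemmCompTreeFix}.
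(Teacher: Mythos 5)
Your core argument is the same as the paper's: reason by contraposition, transfer the two rank-$\rho$ provability statements into an $\omega$-SPC, and apply a single unrestricted cut to put $\bot$ there; the paper's proof is exactly this, stated in one line. Your explicit rank-monotonicity step (if a $\rho$-SPC for $T|{\ConOne}$ exists, then $\FixPX\rho T{\ConOne}\vecgamma$ implies $\FixPX\omega T{\ConOne}\vecgamma$) is correct and makes explicit a detail the paper leaves implicit, and your cut step inside an arbitrary $\omega$-SPC is fine.

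The problem is the patch you add for the existence of a $\rho$-SPC: it is circular. In the branch where $\FixPX\rho T{\ConOne}\bot$ holds you claim to be done ``via the same monotonicity argument applied to $\bot$'', but that monotonicity argument is available only when a $\rho$-SPC exists, which is exactly what is unknown in that branch. If no $\rho$-SPC for $T|{\ConOne}$ exists, then $\FixPX\rho T{\ConOne}\vecgamma$ holds vacuously for every $\vecgamma$ --- in particular your hypotheses $\FixPX\rho T{\ConOne}\phi(\dot{\varOne},{\bm O})$ and $\FixPX\rho T{\ConOne}\Neg\phi(\dot{\varOne},{\bm O})$, and also $\FixPX\rho T{\ConOne}\bot$ --- yet nothing then forces $\bot$ into any $\omega$-SPC, so no contradiction with $\PCons\fx\omega[T]$ is forthcoming. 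Lemma \ref{LemmExistSPC} produces a $\rho$-SPC only from the \emph{failure} of some rank-$\rho$ statement, and $\PCons\fx\omega[T]$ only yields $\omega$-SPCs; neither delivers a $\rho$-SPC when $\rho<\omega$, so the case ``no $\rho$-SPC for $T|{\ConOne}$ exists'' remains untreated in your write-up. To be fair, the paper's one-line proof is silent on the same point; note that for $\rho=\omega$ --- the only instance the paper subsequently uses, in Lemma \ref{LemConsVsRFN} --- the issue disappears entirely, since $\Neg\FixPX\omega T{\ConOne}\bot$ itself provides, via Lemma \ref{LemmExistSPC}, an $\omega$-SPC $Q$ with $\bot\notin Q$, into which $\phi$ and $\Neg\phi$ fall by the reflection-free hypothesis, and one cut closes the argument without any case distinction.
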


\begin{proof}
Reasoning by contrapositive, if $\PCONS\fx\rho{{{\bm \Pi}^1_\omega}}[T]$ fails, then for some formula $\phi({\varThree},{\VarOne})$, some tuple of sets $\ConOne$ and some tuple of natural numbers $\varOne$, we have that
\[\FixP\rho T  \, \phi(\bar{\varOne},\dot {\ConOne} ) \wedge \FixP\rho T  \, \Neg\phi(\bar{\varOne}, \dot {\bm X} ),\]
which applying one cut gives us $\Prop \omega\fx  T  \bot ( \dot {\ConOne} )$.
\end{proof}

Let us now see that with just a little amount of reflection we get arithmetical comprehension. The fist step is to build new sets out of our provability operators.

\begin{lemma}\label{LemmTildeExists}
Let $T$ be any theory, $\phi(z,\VarOne)$ be any formula and $\rho\leq\omega$. Then,
\[
\eca \vdash \forall {{\ConOne}} \, \exists W \ \forall n \ \Big ( n \in W \leftrightarrow \Prop \rho\fx T \phi(\dot n,\dot {{\ConOne}})\Big ).
\] 
\end{lemma}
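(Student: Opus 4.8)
The statement to prove is Lemma~\ref{LemmTildeExists}: for any Tait theory $T$, any formula $\phi(z,\VarOne)$, and $\rho\leq\omega$, it is provable in $\eca$ that
\[
\forall {{\ConOne}} \, \exists W \ \forall n \ \Big ( n \in W \leftrightarrow \PropX\rho\fx T{{\ConOne}} \phi(\dot n,{\bm O})\Big ).
\]

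\textbf{Proof proposal.} The plan is to observe that the defining formula of $W$, namely $\FixPX\rho T{\ConOne}\phi(\dot n,{\bm O})$, unwinds (by definition of $\fx$) to $\forall P\,\big(\spc{T|\ConOne}P\to\phi(\dot n,{\bm O})\in P\big)$, which on its face is $\Pi^1_1$ and hence not directly available for comprehension in $\eca$. The key move is to use Lemma~\ref{LemmUniqueSPC}: SPCs are unique, so the universal quantifier over $P$ is really a quantifier over \emph{the} SPC, when one exists. This lets us case-split on whether an SPC for $T|\ConOne$ exists. First I would reason in $\eca$, fix a tuple $\ConOne$, and consider two cases. \emph{Case 1: there is an SPC $P$ for $T|\ConOne$.} By Lemma~\ref{LemmUniqueSPC} it is unique, and by $\Delta^0_0$-comprehension (which is available in $\eca$) the set $W=\{n : \phi(\dot n,{\bm O})\in P\}$ exists --- here $\phi(\dot n,{\bm O})$ is an elementary pseudo-term in $n$, and membership in the given set $P$ is a $\Delta^0_0(P)$ condition, so this is a legitimate instance of $\Delta^0_0$-comprehension with parameter $P$. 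By uniqueness, for this particular $P$ we have $\FixPX\rho T{\ConOne}\phi(\dot n,{\bm O})\leftrightarrow \phi(\dot n,{\bm O})\in P$, so $W$ is as required. \emph{Case 2: there is no SPC for $T|\ConOne$.} Then $\spc{T|\ConOne}P$ is false for every $P$, so $\FixPX\rho T{\ConOne}\phi(\dot n,{\bm O})$ --- being a universally quantified implication with false antecedent --- holds vacuously for every $n$. Hence $W=\mathbb N$ (which exists by $\Delta^0_0$-comprehension) witnesses the claim.

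\textbf{Key steps, in order.} (1) Reason inside $\eca$ and fix the oracle tuple $\ConOne$. (2) Invoke Lemma~\ref{LemmUniqueSPC} to get uniqueness of an SPC for $T|\ConOne$. (3) Split on the existential statement ``there exists an SPC for $T|\ConOne$'' --- note this is a legitimate case-split even in $\eca$, since we are only using the law of excluded middle, not comprehension, on this sentence. (4) In the positive case, fix the SPC $P$, apply $\Delta^0_0$-comprehension with parameter $P$ to form $W=\{n:\phi(\dot n,{\bm O})\in P\}$, and use uniqueness to rewrite $\FixPX\rho T{\ConOne}\phi(\dot n,{\bm O})$ as $\phi(\dot n,{\bm O})\in P$. (5) In the negative case, observe the reflection statement is vacuously true and take $W=\mathbb N$. (6) Combine the two cases.

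\textbf{Main obstacle.} The only real subtlety is making sure that in Case~1 we genuinely get $W$ via a $\Delta^0_0$ (elementary) comprehension and not something stronger: this requires that ``$\phi(\dot n,{\bm O})\in P$'' is elementary in $n$ with $P$ as a set parameter, which holds because $\phi(\dot n,{\bm O})$ is computed by the elementary term $\phi(\dot z)$ of \S\ref{SubSecSynt} applied to $n$ (and the oracle constants are syntactic objects, so $\phi(\dot n,{\bm O})$ is just a fixed elementary function of $n$), and membership ``$x\in P$'' is atomic. A secondary point to state carefully is that the whole argument, including the case-split on the $\Sigma^1_1$ sentence ``$\exists P\ \spc{T|\ConOne}P$'', is carried out using only classical logic over $\eca$ --- we never need to \emph{decide} this sentence effectively, only to argue by cases. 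Everything else is bookkeeping: unwinding the definition of $\FixPX\rho T{\ConOne}$ and the definition of $\spc\cdot\cdot$, and noting that the value of $W$ differs between the two cases but in each case is a set $\eca$ can form.
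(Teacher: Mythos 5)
Your proposal is correct and follows essentially the same route as the paper's proof: a case split on the existence of a $\rho$-SPC for $T|\ConOne$, using Lemma~\ref{LemmUniqueSPC} to identify the fixed-point provability predicate with membership in the unique SPC $P$ and forming $W=\{n:\phi(\dot n,{\bm O})\in P\}$ by elementary comprehension with parameter $P$, and taking $W=\mathbb N$ by vacuity in the other case. Your additional remarks on why the comprehension instance is genuinely $\Delta^0_0$ and why the case split needs only classical logic are accurate elaborations of steps the paper leaves implicit.
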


\begin{proof} 
Reason within $\eca$. Pick a tuple of sets ${{\ConOne}}$ and let $a$ be the length of $\bm X$. Consider two cases; if there does not exist a $\rho$-SPC for $T|{{\ConOne}}$, then we may set $W=\mathbb N$ and observe that $\forall n \ \big ( n \in W \leftrightarrow \Prop \rho\fx  T  \phi(\dot n,\dot {{\ConOne}} ) \big )$ holds trivially by vacuity.
If such an SPC does exist, by Lemma \ref{LemmUniqueSPC} it is unique; call it $P$. Within $\eca$ we may form the set
\[W=\{n : \phi(\bar n,{\bm C}_{<a}) \in P\}.\]
Then, if $n \in W$ is arbitrary we have by the uniqueness of $P$ that $\Prop \rho\fx  T \phi(\bar n,\dot {{\ConOne}})$ holds. Conversely, if $\Prop \rho\fx  T\phi(\bar n, \dot {\bm X})$ holds, then in particular $\phi(\bar n,{\bm C}_{<a}) \in P$ and $n\in W$ by definition, so $W$ has all desired properties.
Since ${{\ConOne}}$ was arbitrary, the claim follows.
\end{proof}

\begin{lemma}\label{LemmRefComp}
Let $T$ be any theory and $\rho\leq\omega$. Then,
\[\aca\subseteq\eca+\PRFN\fx\rho{{{\bm \Sigma}^0_1 }}[T].\]
\end{lemma}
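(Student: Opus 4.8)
The plan is to show that $\eca+\PRFN\fx\rho{{\bm\Sigma}^0_1}[T]$ proves every instance of ${\bm\Sigma}^0_1$-comprehension, since $\aca$ is axiomatized over $\Robinson+{\tt Ind}$ by ${\bm\Sigma}^0_1$-$\compax$ (and $\eca$ already contains $\Robinson+{\tt Ind}$). So fix a ${\bm\Sigma}^0_1$ formula $\sigma(n,{\VarOne})$ with set parameters ${\VarOne}$, reason in $\eca+\PRFN\fx\rho{{\bm\Sigma}^0_1}[T]$, and let ${\ConOne}$ be an arbitrary tuple of sets to be fed to the oracle. First I would invoke Lemma~\ref{LemmTildeExists} to obtain a set $W$ with $n\in W\leftrightarrow \FixPX\rho T{{\ConOne}}\sigma(\dot n,{\bm O})$ for all $n$. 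The goal is to check that $W=\{n:\sigma(n,{\ConOne})\}$.

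One inclusion is the completeness direction: if $\sigma(n,{\ConOne})$ holds, then since $\sigma$ is (in particular) ${\bm\Pi}^1_1$ — indeed arithmetical — Corollary~\ref{CorComplete}.\ref{ItCompleteOne} (or already Lemma~\ref{LemmComplete}, since $\sigma$ is arithmetical) gives $\FixPX\rho T{{\ConOne}}\sigma(\dot n,{\bm O})$, so $n\in W$. Note this direction needs only $\eca$. The other inclusion is exactly where reflection enters: if $n\in W$, i.e.\ $\FixPX\rho T{{\ConOne}}\sigma(\dot n,{\bm O})$, then the instance of $\PRFN\fx\rho{{\bm\Sigma}^0_1}[T]$ for the formula $\sigma$ (applied with the tuple ${\ConOne}$ and the number $n$, using that $\forall{\ConOne}\,\forall n$ is quantified in the schema) yields $\sigma(n,{\ConOne})$. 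Combining the two inclusions, $\forall n\,(n\in W\leftrightarrow\sigma(n,{\ConOne}))$, which is the desired instance of ${\bm\Sigma}^0_1$-$\compax$; since ${\ConOne}$ was arbitrary we are done.

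The only subtlety — and the step I would be most careful about — is the bookkeeping around oracles and free set variables: the set $W$ produced by Lemma~\ref{LemmTildeExists} depends on ${\ConOne}$, and one must make sure that the occurrence of the oracle constants ${\bm O}$ inside $\FixPX\rho T{{\ConOne}}\sigma(\dot n,{\bm O})$ matches the occurrence of ${\ConOne}$ as actual set parameters on the "truth" side $\sigma(n,{\ConOne})$, so that the reflection schema applies verbatim. This is precisely the design of $\PRFN\fx\rho\Gamma[T]$ in the preceding definition, so no real obstacle arises; it is routine. Note also that we do not need $\rho<\omega$ anywhere, so the statement holds for all $\rho\le\omega$, and $T$ is an arbitrary theory (Tait-ness is only used implicitly via Lemma~\ref{LemmTildeExists}, which is stated for Tait theories).
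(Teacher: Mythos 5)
Your proof is correct and takes essentially the same route as the paper: form the set $W$ via Lemma \ref{LemmTildeExists}, then use reflection for one inclusion and arithmetical completeness for the other to verify the ${\bm\Sigma}^0_1$-$\compax$ instance. One small caution: the parenthetical fallback to Corollary \ref{CorComplete}.\ref{ItCompleteOne} would be circular since that corollary is proved over $\aca$; your primary citation of Lemma \ref{LemmComplete}, available already in $\eca$ for arithmetical $\sigma$, is the right one and is exactly what the paper uses.
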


\begin{proof}
Work in $\eca+\PRFN\fx\rho{{{\bm \Sigma}^0_1 }}[T]$. We only need to prove ${\bm\Sigma}^0_1$-$\rm CA$, that is,
\[\forall {{\VarOne}}\, \exists Y \, \forall n\ \big (n \in Y \, \leftrightarrow \, \phi(n,{{\VarOne}} ) \big ),\]
where $\phi(n,{{\VarOne}})$ can be any formula in $\Sigma^0_1({{\VarOne}})$.

Fix some tuple of sets ${\ConOne}$. By Lemma \ref{LemmTildeExists}, we can form the set
\[
Z=\{n : \Prop \rho\fx T \ \phi(\bar n, \dot {\bm X})\}.
\]
We claim that $\forall n \ \big ( n\in Z \leftrightarrow \phi(n, {{\ConOne}}) \big )$ which finishes the proof. If $n\in Z$, then, by reflection, $\phi(n,{{\ConOne}})$. On the other hand, if $\phi(n,{{\ConOne}})$ we get by arithmetical completeness (Lemma \ref{LemmComplete}) that $\Prop \rho\fx T \phi(\bar n, \dot {\bm X})$, so that $n\in Z$.
\end{proof}

The above result along with the completeness theorems mentioned earlier may be used to prove that many theories defined using reflection and consistency are equivalent. Below, $\Neg \Phi =\{\Neg\phi:\phi\in \Phi \}$.

\begin{lemma}\label{LemConsVsRFN}
Let $T$ be a theory extending $\Robinson$, $\Phi$ a set of formulas and $\rho\leq\omega$. Then:
\begin{enumerate}
\item if ${\bm \Sigma}^0_ 1\subseteq \Phi\subseteq{\bm \Pi}^1_1$,
\[\eca+\PCONS\fx \rho \Phi [T] \equiv \eca + \PRFN \fx \rho { \Phi \cup \Neg \Phi }[T];\]

\item $\eca + \PCons\fx\omega [T]\equiv \eca+ \PRFN \fx\omega{{{\bm \Pi } ^1_2}}[T].$
\end{enumerate}
\end{lemma}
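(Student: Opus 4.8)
\textbf{Proof plan for Lemma \ref{LemConsVsRFN}.}

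The plan is to prove both directions of each equivalence by combining the comprehension we already extract from reflection (Lemma \ref{LemmRefComp}), the arithmetical completeness of $\omega$-logic (Lemma \ref{LemmComplete}), and the $\Pi^1_1$-completeness packaged in Corollary \ref{CorComplete}. Throughout I reason inside $\eca$ (plus the relevant extra axioms), fix a tuple $\ConOne$ of sets and a tuple $\varOne$ of numbers, and freely use that, under either hypothesis, a $\rho$-SPC for $T|\ConOne$ exists and is unique: for the consistency schemes this is Lemma \ref{LemmExistSPC}, and for the reflection schemes it follows from Lemma \ref{LemmRefComp} together with the provable existence of SPCs in $\aca$ (Lemma \ref{LemmPICAExists} is overkill; $\aca$ suffices since by Lemma \ref{LemmRefComp} we already have $\aca$ available, and $\aca$ proves the existence of the fixed point for the arithmetical operator defining an SPC). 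Call this unique SPC $P$; then $\FixPX\rho T\ConOne\psi$ is equivalent to $\psi\in P$, which turns the Boolean combinations appearing in the two schemes into genuine Boolean operations on membership in $P$.

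For item (1), first assume $\PCONS\fx\rho\Gamma[T]$ and let $\phi\in\Gamma\cup\Neg\Gamma$ with $\FixPX\rho T\ConOne\phi(\dot\varOne,{\bm O})$; I must show $\phi(\varOne,\ConOne)$. If $\phi(\varOne,\ConOne)$ failed, then $\Neg\phi(\varOne,\ConOne)$ holds, and since $\Neg\phi$ lies in $\Gamma\cup\Neg\Gamma$ and in fact in ${\bm\Pi}^1_1$ (both $\Gamma$ and $\Neg\Gamma$ sit inside ${\bm\Pi}^1_\omega$, but the relevant sentence is arithmetical or $\Pi^1_1$ depending on which half of the union $\phi$ came from) I want to derive $\FixPX\rho T\ConOne\Neg\phi(\dot\varOne,{\bm O})$ and contradict $\PCONS$. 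When $\phi\in\Gamma\subseteq{\bm\Pi}^1_1$, its negation is ${\bm\Sigma}^1_1$; when $\phi\in\Neg\Gamma$, its negation is in $\Gamma\subseteq{\bm\Pi}^1_1$. In the $\Pi^1_1$ case Corollary \ref{CorComplete}.\ref{ItCompleteOne} gives $\FixPX\rho T\ConOne\Neg\phi(\dot\varOne,{\bm O})$ directly. In the $\Sigma^1_1$ case Corollary \ref{CorComplete}.\ref{ItCompleteTwo} only gives it relative to an extended oracle $\ConOne,B$; but by Lemma \ref{OracleMonotonicity} (using that an SPC for $T|\ConOne$ exists) $\FixPX\rho T\ConOne\phi(\dot\varOne,{\bm O})$ already upgrades to $\FixPX\rho T{\ConOne,B}\phi(\dot\varOne,{\bm O})$, so $\PCONS\fx\rho\Gamma[T]$ applied with the oracle $\ConOne,B$ still yields the contradiction — here one uses that $\PCONS$ quantifies over all oracle tuples. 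Conversely, assume $\PRFN\fx\rho{\Gamma\cup\Neg\Gamma}[T]$; if $\FixPX\rho T\ConOne\phi(\dot\varOne,{\bm O})$ and $\FixPX\rho T\ConOne\Neg\phi(\dot\varOne,{\bm O})$ both held, reflection applied to $\phi$ and to $\Neg\phi$ (both members of $\Gamma\cup\Neg\Gamma$) would give $\phi(\varOne,\ConOne)\wedge\Neg\phi(\varOne,\ConOne)$, absurd; so $\PCONS\fx\rho\Gamma[T]$ holds.

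For item (2), with $\rho=\omega$ the consistency of $T$ and the refutability of $\bot$ lose the rank parameter, and $\PCons\fx\omega[T]=\forall\ConOne\,\Neg\FixPX\omega T\ConOne\bot$. The reverse direction is the easier half: if $\PRFN\fx\omega{{\bm\Pi}^1_2}[T]$ holds then in particular reflection holds for ${\tt 0=1}\in{\bm\Pi}^1_2$, and $\FixPX\omega T\ConOne\bot$ would give $\FixPX\omega T\ConOne({\tt 0=1})$ via the Tait rules (weakening $\bot$ to $\langle{\tt 0=1}\rangle$, or cutting appropriately), hence $0=1$, a contradiction; so $\PCons\fx\omega[T]$ follows. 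For the forward direction, assume $\PCons\fx\omega[T]$ and let $\phi(\varThree,\VarOne)=\forall Y\,\psi(\varThree,\VarOne,Y)\in{\bm\Pi}^1_2$ with $\psi\in{\bm\Sigma}^1_1$; suppose $\FixPX\omega T\ConOne\phi(\dot\varOne,{\bm O})$ but $\Neg\phi(\varOne,\ConOne)$, i.e. $\exists Y\,\Neg\psi(\varOne,\ConOne,Y)$, so fix a witness $B$ with $\Neg\psi(\varOne,\ConOne,B)$. Now $\Neg\psi$ is ${\bm\Pi}^1_1$ in the parameters $\ConOne,B$, so Corollary \ref{CorComplete}.\ref{ItCompleteOne} gives $\FixPX\omega T{\ConOne,B}\Neg\psi(\dot\varOne,{\bm O},\bar B)$; meanwhile $\FixPX\omega T\ConOne\phi(\dot\varOne,{\bm O})$ monotonically extends by Lemma \ref{OracleMonotonicity} to $\FixPX\omega T{\ConOne,B}\phi(\dot\varOne,{\bm O})$, from which the $\forall^1$-instantiation rule of the Tait calculus (now at $\rho=\omega$ so no cut-rank restriction obstructs us) yields $\FixPX\omega T{\ConOne,B}\psi(\dot\varOne,{\bm O},\bar B)$. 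A single cut on $\psi(\dot\varOne,{\bm O},\bar B)$ then gives $\FixPX\omega T{\ConOne,B}\bot$, contradicting $\PCons\fx\omega[T]$ at the oracle $\ConOne,B$; hence $\phi(\varOne,\ConOne)$. (Strictly, to invoke the Tait rules "inside" $\FixPX{}{}$ one argues, as usual, that membership in the unique SPC $P$ is closed under those rules, and that $P$ exists — which under $\PCons\fx\omega[T]$ follows from Lemma \ref{LemmExistSPC} since $\Neg\FixPX\omega T\ConOne\bot$ in particular.)

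\textbf{Main obstacle.} The delicate point is not any single deduction but the bookkeeping of oracles: the completeness results for ${\bm\Sigma}^1_1$/${\bm\Pi}^1_2$ inputs (Corollary \ref{CorComplete}.\ref{ItCompleteTwo} and the $\Pi^1_2$ argument above) force one to pass to an extended oracle $\ConOne,B$, and one must check that the relevant consistency or reflection hypothesis, being universally quantified over all oracle tuples, still applies there, and that the existence of an SPC — needed to invoke Lemma \ref{OracleMonotonicity} — is genuinely available in each case from the hypothesis at hand rather than assumed. Getting the $\Pi^1_1$ versus $\Sigma^1_1$ casework in item (1) to line up with exactly which completeness corollary is invokable (and hence whether an oracle extension is needed) is where the care goes; the rest is routine use of the Tait rules together with the identification $\FixPX\rho T\ConOne\psi\leftrightarrow\psi\in P$ for the unique SPC $P$.
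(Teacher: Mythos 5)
Your overall strategy coincides with the paper's: the reflection-to-consistency directions are the trivial two-line arguments, and the consistency-to-reflection directions go through completeness of $\omega$-logic, extension of the oracle by a witness $B$, Lemma \ref{OracleMonotonicity}, and Lemma \ref{LemmExistSPC} to supply the SPC that Lemma \ref{OracleMonotonicity} requires. But there is a genuine base-theory gap. Corollary \ref{CorComplete} (both parts) is proved only in $\aca$, while in the hard directions you are reasoning in $\eca+\PCONS\fx \rho \Gamma [T]$ (resp.\ $\eca+\PCons\fx\omega[T]$), and your justification for having enough strength --- that ``$\aca$ proves the existence of the fixed point for the arithmetical operator defining an SPC'' --- is false: the existence of these least fixed points is precisely the impredicative content of the paper (Lemma \ref{LemmPICAExists} is stated for $\pica$ for a reason, and by the closing Remark $\aca+\exists P\ {\tt SPC}^0_{\rca}(P)$ is already equivalent to $\pica$). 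In the consistency-to-reflection direction no reflection is yet available either, so Lemma \ref{LemmRefComp} cannot be invoked directly. The paper closes this gap by a bootstrap your plan omits, and it is exactly where the hypothesis ${\bm \Sigma}^0_1\subseteq\Gamma$ (never used in your argument) enters: first prove reflection for arithmetical $\phi$ using only the $\eca$-provable completeness Lemma \ref{LemmComplete}, deduce $\PRFN\fx\rho{{{\bm \Sigma}^0_1}}[T]$, conclude $\aca\subseteq\eca+\PCONS\fx \rho \Gamma [T]$ by Lemma \ref{LemmRefComp} (and in item (2) pass first through Lemma \ref{LemmConsImpCONS}), and only then apply Corollary \ref{CorComplete} to non-arithmetical formulas.

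A second, more local flaw is in your item (2): you pass from $\FixPX\omega T{{\ConOne},B}\,\forall Y\,\psi(\dot\varOne,{\bm O},Y)$ to $\FixPX\omega T{{\ConOne},B}\,\psi(\dot\varOne,{\bm O},\bar B)$ by a ``$\forall^1$-instantiation rule'', but the Tait calculus has no elimination rules; SPCs are closed under the listed introduction rules and cut, and no inversion property for SPCs is established anywhere in the paper (nor is it free over the base theory). The paper never instantiates: it applies Corollary \ref{CorComplete}.\ref{ItCompleteTwo} to the true ${\bm \Sigma}^1_2$ formula $\Neg\phi$ to get $\FixPX\omega T{{\ConOne},B}\Neg\phi$, lifts $\phi$ by Lemma \ref{OracleMonotonicity}, and contradicts $\PCONS\fx\omega{{{\bm \Pi}^1_\omega}}[T]$, already extracted from $\PCons\fx\omega[T]$ via Lemma \ref{LemmConsImpCONS}. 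Your step can be repaired in that spirit (apply $(\exists^1)$ to $\Neg\psi(\bar B)$ to obtain $\Neg\phi$ inside the SPC and cut it against $\phi$), but as written it relies on a rule that does not exist; your appeal to ``weakening'' of $\bot$ in the easy half of (2) has the same character, though the paper itself dismisses that direction as obvious. The casework and oracle bookkeeping in your item (1), by contrast, match the paper's proof.
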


\begin{proof}
For the first claim, let us begin by proving that
\[\eca + \PCONS\fx \rho \Phi [T] \subseteq  \eca + \PRFN \fx \rho {\Phi\cup\Neg\Phi}[T].\]
Assume $\PRFN \fx \rho {\Phi\cup\Neg\Phi}[T]$ and let $\phi\in\Phi$. Towards a contradiction, suppose that for some tuple of natural numbers $\varOne$ and some tuple of sets $\ConOne$,
\[\FixPX\rho T {\ConOne} \, \phi(\bar{{\varOne}},{\dot{\bm X}}) \wedge \FixPX\rho T {{\ConOne}} \, \Neg\phi(\bar{{\varOne}},{\dot{\bm X}}).\]
By reflection, this gives us $\phi({{\varOne}},{{\ConOne}}) \wedge \Neg\phi( {{\varOne}},{{\ConOne}}),$ which is impossible. Since $\phi$ was arbitrary, the claim follows.

Next we prove that
\[\eca + \PCONS\fx \rho \Phi [T] \supseteq \eca + \PRFN \fx \rho {\Phi\cup\Neg\Phi}[T].\]
For this, fix $\phi \in \Phi\cup\Neg\Phi$ and reason in $\eca + \PCONS\fx \rho \Phi [T]$. We first consider the case where $\phi=\phi({{\varThree}},{{\VarOne}})$ is arithmetical. 

Let $\varOne$ be a tuple of natural numbers and $\ConOne$ a tuple of sets such that $\FixPX\rho T{{\ConOne}} \phi(\bar{\varOne},{\dot{\bm A}})$. 
If $\phi({\varOne}, {\ConOne})$ were false, by Lemma \ref{CorComplete}.\ref{ItCompleteOne}, we would also have that $\FixPX\rho T{{\ConOne}} \Neg\phi(\bar{\varOne},{\dot{\bm A}})$; but this contradicts $\PCONS\fx \rho \Phi [T]$. We conclude that $\phi({\varOne}, {\ConOne})$ holds, as desired.

Before considering the case where $\phi$ is not arithmetical, observe that since ${\bm \Sigma}^0_ 1\subseteq \Phi$, it follows that
\[\eca + \PCONS\fx \rho \Phi [T] \supseteq \eca + \PRFN \fx \rho {{\bm \Sigma}^0_1}[T],\]
and by Lemma \ref{LemmRefComp}, we have that
\[\aca\subseteq \eca + \PRFN \fx \rho {{\bm \Sigma}^0_1}[T] ,\]
so we may now use arithmetical comprehension.

With this observation in mind, the argument will be very similar to the one before. Once again, suppose that $\FixPX\rho  T{{\ConOne}} \phi( \bar {\varOne},{\dot{\bm X}})$ for some tuples ${\varOne},{{\ConOne}}$. If $\phi({\varOne},{{\ConOne}})$ were false, by Corollary \ref{CorComplete}.\ref{ItCompleteTwo}, there would be $Y$ such that $\FixPX\rho T{{{\ConOne}},B} \Neg\phi(\bar{\varOne},{\dot{\ConOne},\dot Y})$. By Lemma \ref{LemmExistSPC}, $\eca + \PCONS\fx \rho \Phi [T]$ implies that there exists a $\rho$-SPC for $T|\ConOne$, and hence we may use Lemma \ref{OracleMonotonicity} to see that
\[\FixPX\rho  T{{{\ConOne}},Y} \phi(\bar{\varOne},{\dot{\bm X},\dot Y})\wedge \FixPX\rho  T{{{\ConOne}},Y}\Neg\phi(\bar{\varOne},{\dot{\bm X},\dot Y}).\]
As before, this contradicts  $\PCONS\fx \rho \Phi [T]$. We conclude that $\phi({\varOne}, {{\ConOne}})$ holds, as desired.

Now we prove the second claim. The right-to-left implication is obvious, so we focus on the other. Reason in $\eca+\PCons\fx\omega [T]$. By Lemma \ref{LemmConsImpCONS}, this implies $\PCONS\fx\omega {{{\bm \Pi}^1_\omega}} [T]$, so that using Lemma \ref{LemmRefComp} and the previous item, we may reason in $\aca$.

Fix $\phi ({\varThree},{\VarOne}) \in{\bm \Pi}^1_2$ and assume that $\FixPX\omega T{{\ConOne}} \phi({\bar {\varOne}},{\dot{\bm X}})$. If $\phi({{\varOne}},{\ConOne})$ were false, then by Corollary \ref{CorComplete}, we would also have $\FixPX\omega  T{{\ConOne},Y} \Neg\phi({\bar {\varOne}},{\dot{\bm X}},\dot Y)$ for some set $Y$, and using Lemma \ref{OracleMonotonicity} as above,
\[ \FixPX\omega  T{{\ConOne},B} \phi({\bar {\varOne}},{\dot{\bm X}},\dot Y) \wedge \FixPX\omega  T{{\ConOne},B} \Neg\phi({\bar {\varOne}},{\dot{\bm X}},\dot Y).\]
But this contradicts $\PCONS\fx\omega {{{\bm \Pi}^1_\omega}} [T]$, and we conclude that $\phi ({\varThree},{\VarOne})$ holds.
\end{proof}

Next, we turn our attention to proving that reflection implies $\pica$. This fact will be an easy consequence of the following.

\begin{lemma}\label{LemmTildeToTR}
Let $T$ be any theory, $\rho\leq\omega$, $\Phi\subseteq {\bm \Pi}^0_\omega({{\VarOne}})$, and $\phi(\varThree,\VarOne)\in \Pi^1_1/\Phi$ with all free variables among those shown. Then, it is provable in $\aca +  \PRFN \fx\rho {{{\Pi}^1_1/\Phi}} [T]$ that
\[\forall \ConOne\, \forall \varOne \ \big ( \phi({{\varOne}},{{\ConOne}}) \leftrightarrow \FixPX \rho T{{\ConOne}} \phi(\dot {{\varOne}},{\dot{\bm X}}) \big ).\]
\end{lemma}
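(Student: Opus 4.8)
The plan is to prove the two directions of the equivalence separately; I expect each to collapse immediately onto a result already established in \S\ref{SecComp}, so that essentially no new argument is required.

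For the implication from right to left, $\FixPX\rho T{\ConOne}\phi(\dot{\varOne},{\bm O})\to\phi(\varOne,\ConOne)$, I would simply note that $\phi\in\Pi^1_1/\Gamma$ by hypothesis, so this is precisely the instance of the scheme $\PRFN\fx\rho{\Pi^1_1/\Gamma}[T]$ for the formula $\phi$, which is among the axioms of the theory in which we are reasoning. For the implication from left to right, $\phi(\varOne,\ConOne)\to\FixPX\rho T{\ConOne}\phi(\dot{\varOne},{\bm O})$, the one point to observe is that every formula of $\Pi^1_1/\Gamma$ is already a ${\bm\Pi}^1_1$ formula whenever $\Gamma\subseteq{\bm\Pi}^0_\omega(\VarOne)$: such a $\phi$ is of the form $\forall X\,\psi$ with $\psi$ arithmetical, and hence lies in ${\bm\Pi}^1_1$ by definition of that class. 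With this in hand, Corollary \ref{CorComplete}.\ref{ItCompleteOne} applies directly to $\phi$ — itself resting on $\omega$-model soundness (Lemma \ref{LemModSound}) and the Henkin--Orey completeness theorem via Corollary \ref{CorCompTFix} — and gives $\aca\vdash\forall\ConOne\,\forall\varOne\,\big(\phi(\varOne,\ConOne)\to\FixPX\rho T{\ConOne}\phi(\dot{\varOne},{\bm O})\big)$; since $\aca$ is a subtheory of $\aca+\PRFN\fx\rho{\Pi^1_1/\Gamma}[T]$, this closes the argument.

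The main ``obstacle'' is therefore only the bookkeeping with complexity classes, namely checking that $\Pi^1_1/\Gamma\subseteq{\bm\Pi}^1_1$ so that the $\aca$-provable completeness of $\omega$-logic for ${\bm\Pi}^1_1$ formulas is available. It is worth noting that the left-to-right implication uses no reflection whatsoever, being a theorem of $\aca$, so all the reflection hypothesis contributes here is the soundness half of the equivalence. The reason I would phrase the lemma with $\Pi^1_1/\Gamma$ rather than all of ${\bm\Pi}^1_1$ is that later — taking $\Gamma={\bm\Sigma}^0_2$ and feeding the resulting set-defining equivalence into Lemma \ref{LemmTildeExists} — this will yield $(\Pi^1_1/{\bm\Sigma}^0_2)$-comprehension, hence $\pica$ by Theorem \ref{TheoPicaSimp}, from a correspondingly restricted reflection scheme.
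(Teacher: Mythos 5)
Your proposal is correct and follows essentially the same route as the paper's own proof: the right-to-left implication is read off directly as an instance of the scheme $\PRFN\fx\rho{{\Pi^1_1/\Gamma}}[T]$, and the left-to-right implication is exactly the $\aca$-provable ${\bm\Pi}^1_1$-completeness of Corollary \ref{CorComplete}.\ref{ItCompleteOne}, with your observation that $\Pi^1_1/\Gamma\subseteq{\bm\Pi}^1_1$ (since $\Gamma\subseteq{\bm\Pi}^0_\omega(\VarOne)$) being the only bookkeeping step, which the paper leaves implicit.
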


\begin{proof}
Reason in $\aca + \PRFN \fx\rho {{{\Pi}^1_1/\Phi}} [T]$ and let ${{\ConOne}}$ and ${{\varOne}}$ be arbitrary. For the left-to-right direction we see that if $\phi({{\varOne}},{\ConOne})$ holds, then by provable ${\bm \Pi}^1_1$-completeness (Corollary \ref{CorComplete}), $\FixPX \rho T{{\ConOne}} \phi(\bar {\varOne},{\dot{\bm X}})$ holds as well. For the right-to-left direction, if $\FixPX\rho T{{\ConOne}} \phi(\bar {{\varOne}},{\dot{\bm X}})$, by $\PRFN\fx\rho{{\Pi^1_1/\Phi}}[T]$, $\phi( {{\varOne}},{\bm X})$ holds.
\end{proof}

We can now finally combine all our previous results and formulate the main theorem of this section.

\begin{theorem}\label{TheoRFNATR}
Given any theory $T$,
\[\aca + \PRFN\fx\rho{{\picaform}} [T] \vdash \pica.\]
\end{theorem}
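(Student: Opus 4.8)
The plan is to reduce the claim to a comprehension scheme via the economical presentation of $\pica$ given by Theorem~\ref{TheoPicaSimp}, and then obtain that scheme by combining Lemma~\ref{LemmTildeExists} with Lemma~\ref{LemmTildeToTR}. Since $\aca$ already contains $\Robinson$ and ${\tt Ind}$, Theorem~\ref{TheoPicaSimp} tells us that it suffices to show that $\aca + \PRFN\fx\rho{{\picaform}}[T]$ proves every instance of $({\Pi}^1_1/{\bm \Sigma}^0_2)\mbox{-}\compax$; recall that formulas in $\picaform$ carry no oracle constants $O_i$, so these are genuine instances of the comprehension scheme of $\pica$.

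So first I would fix a formula $\phi(n,{\VarOne})\in{\Pi}^1_1/{\bm \Sigma}^0_2$ with all free variables displayed and reason inside $\aca + \PRFN\fx\rho{{\picaform}}[T]$. Given an arbitrary tuple of sets ${\ConOne}$ to be assigned to the parameters ${\VarOne}$, apply Lemma~\ref{LemmTildeExists} (which only needs $\eca$) to produce a set $W$ with $\forall n\,\big(n\in W\leftrightarrow\FixPX\rho T{\ConOne}\phi(\dot n,{\bm O})\big)$. Next, apply Lemma~\ref{LemmTildeToTR} with $\Gamma={\bm \Sigma}^0_2$: since ${\Pi}^1_1/\Gamma$ is then exactly $\picaform$, the hypothesis $\PRFN\fx\rho{{{\Pi}^1_1/\Gamma}}[T]$ is precisely what we are assuming, and ${\bm \Sigma}^0_2\subseteq{\bm \Pi}^0_\omega({\VarOne})$ as required, so we conclude $\forall n\,\big(\phi(n,{\ConOne})\leftrightarrow\FixPX\rho T{\ConOne}\phi(\dot n,{\bm O})\big)$. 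Chaining the two equivalences yields $\forall n\,\big(n\in W\leftrightarrow\phi(n,{\ConOne})\big)$, which is the desired instance of $({\Pi}^1_1/{\bm \Sigma}^0_2)\mbox{-}\compax$ for the parameters ${\ConOne}$; as ${\ConOne}$ and $\phi$ were arbitrary, the full scheme follows, and hence $\pica$ is provable.

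I do not expect a serious obstacle here: the real work has already been done, namely the provable ${\bm \Pi}^1_1$-completeness of $\omega$-logic (Corollary~\ref{CorComplete}) feeding the left-to-right direction of Lemma~\ref{LemmTildeToTR}, the reflection hypothesis feeding the right-to-left direction, and the uniqueness of SPCs feeding Lemma~\ref{LemmTildeExists}. The only thing that needs care is the complexity bookkeeping — checking that ${\bm \Sigma}^0_2$ is an admissible choice of $\Gamma$ in Lemma~\ref{LemmTildeToTR} and that ${\Pi}^1_1/{\bm \Sigma}^0_2$ is exactly the class for which Theorem~\ref{TheoPicaSimp} demands comprehension, so that targeting the reformulation of $\pica$ rather than its standard ${\bm \Pi}^1_1$-$\compax$ axiomatization is what makes the argument go through cleanly.
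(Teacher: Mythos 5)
Your proposal is correct and follows essentially the same route as the paper's own proof: reduce to $({\Pi}^1_1/{\bm \Sigma}^0_2)$-comprehension via Theorem~\ref{TheoPicaSimp}, obtain the set $W$ from Lemma~\ref{LemmTildeExists}, and convert membership in $W$ into truth of $\phi$ via Lemma~\ref{LemmTildeToTR} with $\Gamma={\bm \Sigma}^0_2$. The complexity bookkeeping you flag is exactly the point the paper relies on, so there is nothing further to add.
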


\begin{proof}
Work in $\aca + \PRFN\fx\rho{{\picaform}} [T].$ By Theorem \ref{TheoPicaSimp}, we need only prove comprehension for arbitrary $\phi(n,{\VarOne})\in {\Pi}^1_1/\Sigma^0_2 ({{\VarOne}})$.
Fix a tuple of sets $\ConOne$. 
By Lemma \ref{LemmTildeExists}, there is a set $W$ satisfying
\[\forall n \ \big ( n \in W \leftrightarrow \FixPX\rho T {{\ConOne}} \phi(\dot n,{\dot{\bm X}})\big ).\]
But by Lemma \ref{LemmTildeToTR}, this is equivalent to
\[\forall n \ \big ( n \in W \leftrightarrow   \phi(n, {{\ConOne}})\big ).\]
Since $\phi$ and ${{\ConOne}}$ were arbitrary, we obtain $\pica$, as desired.
\end{proof}

Thus impredicative reflection implies impredicative comprehension, as claimed. Next we will prove the opposite implication, but for this we will first need to take a detour through {\em $\beta$-models.}

\section{Countable $\beta$-models and impredicative reflection}\label{section:CountableCodedModels}

Our goal in this section is to derive a converse of Theorem \ref{TheoRFNATR}. The main tool for this task will be the
notion of a \emph{countable coded $\beta$--model}. In what follows we shall discuss the definition and basic existence results for such models.

Note that the converse of Lemma \ref{LemModSound} is not always true for ${\bm \Pi}^1_1$-sentences, as we are not truly quantifying over {\em all} subsets of $\mathbb N$. Nevertheless, for special kinds of models it may actually be the case that $\model\models \forall X\phi(X)$ implies that $\forall X\phi(X)$ when $\phi$ is arithmetical; such models are called {\em $\beta$-models.}

Below, recall that $\bm V=\langle V_i\rangle_{i\in\mathbb N}$ is assumed to be a sequence listing all second-order variables, and that ${\bm S}_{<a}=\langle S_i\rangle_{i<a}$ for any sequence $\bm S$.

\begin{definition}
A countable coded $\omega$-model $\mathfrak M$ is a {\em $\beta$-model} if for every $\phi (\varThree,{\bm V}_{<a})\in {\bm \Pi}^1_1$ and every $\varOne$, $\phi (\varOne,|{\mathfrak M}|_{<a})$ holds if and only if $\mathfrak M\models\phi(\bar\varOne,{\bm C}_{<a})$.
\end{definition}

Thus, $\beta$-models reflect ${\bm \Pi}^1_1$ formulas; however, with no additional assumptions, we can push this property a bit farther.

\begin{lemma}\label{LemmAbsolute}
Fix a formula $\phi(\varThree, {\bm V}_{<a} )\in {\bm \Sigma}^1_2$. It is provable in $\aca$ that, for all $a$-tuples $\bm W$ and all $\varOne$, if $\mathfrak M$ is a $\beta$-model with $|\model|_{<a}=\bm W$ and such that $\mathfrak M\models\phi (\bar\varOne,{\bm C}_{<a})$, then $\phi (\varOne,{\bm W})$ holds.
\end{lemma}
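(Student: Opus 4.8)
The plan is to reduce the $\bm\Sigma^1_2$ case to the defining $\bm\Pi^1_1$-reflection property of a $\beta$-model by peeling off the outermost existential set-quantifier and using that the second-order part of $\model$ is coded by an actual set. Write $\phi(\varThree,{\bm V}_{<a}) = \exists Y \,\psi(\varThree,{\bm V}_{<a},Y)$ with $\psi\in\bm\Pi^1_1$. Reason in $\aca$: fix an $a$-tuple $\ConOne$ and a tuple $\varOne$, and let $\model$ be a $\beta$-model with $|\model|_{<a}=\ConOne$ satisfying $\model\models\phi(\bar\varOne,{\bm C}_{<a})$. By the satisfaction clause for $\exists X$, there is some $k$ with $\model\models\psi(\bar\varOne,{\bm C}_{<a},C_k)$. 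Now $\psi$ is $\bm\Pi^1_1$, and the constants $C_0,\dots,C_{a-1},C_k$ are all interpreted by the sets $|\model|_0,\dots,|\model|_{a-1},|\model|_k$, each of which is coded into the single set $|\model|$ and hence available in $\aca$. Applying the $\beta$-model property to $\psi$ (with $|\model|_k$ as the witness for the extra free variable) yields $\psi(\varOne,\ConOne,|\model|_k)$. Then one existential introduction in the ambient theory gives $\exists Y\,\psi(\varOne,\ConOne,Y)$, i.e.\ $\phi(\varOne,\ConOne)$.

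The main steps in order: (i) put $\phi$ in the normal form $\exists Y\,\psi$ with $\psi\in\bm\Pi^1_1$ — here one should be slightly careful that the excerpt's definition of a $\beta$-model quantifies over $\bm\Pi^1_1$ formulas $\psi(\varThree,{\bm V}_{<a'})$ with \emph{all} free set-variables among the first $a'$ of the enumeration ${\bm V}$, so one may need to rename set-variables so that the parameter $Y$ becomes $V_a$ and work with the $(a{+}1)$-tuple $\ConOne\frown|\model|_k$; (ii) extract the witnessing index $k$ from $\model\models\exists X\,\psi$ using the satisfaction clause; (iii) observe that passing from $\model\models\psi(\dots,C_k)$ to $\psi(\varOne,\ConOne,|\model|_k)$ is exactly an instance of the $\beta$-model property, legitimate because $|\model|_k$ is definable from $|\model|$ by an arithmetical formula so $\aca$ sees it; (iv) re-existentialize. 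The only real work is bookkeeping around which set-variables are ``free'' in the sense of the $\beta$-model definition, so that step (iii) genuinely matches the hypothesis.

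I expect the main obstacle to be purely this indexing/substitution bookkeeping: ensuring the added witness set $|\model|_k$ is correctly lined up with the variable list ${\bm V}$ expected by the definition of ``$\beta$-model'', and confirming that $\aca$ can form the code for $|\model|_k$ from the code for $|\model|$ and then appeal to the definition with the enlarged tuple. There is no deep mathematical content beyond the one-quantifier reduction; the proof should be a short paragraph once this alignment is set up. A subtlety worth a sentence in the final write-up: the argument genuinely uses that $\model$ is a $\beta$-model and not merely an $\omega$-model, since for a general $\omega$-model the analogue of Lemma~\ref{LemModSound} only gives one direction, whereas here we need the converse direction for $\psi$.
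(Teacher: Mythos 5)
Your proof is correct and follows essentially the same route as the paper's: write $\phi$ as an existential set-quantifier over a $\bm\Pi^1_1$ matrix, extract the witnessing constant $C_k$ from the satisfaction clause, apply the $\beta$-model property (the converse direction) to the $\bm\Pi^1_1$ part with $|\model|_k$ as parameter, and re-existentialize. The only difference is that you make explicit the variable-indexing bookkeeping that the paper's proof sketch glosses over, which is a harmless refinement rather than a different argument.
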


\begin{proof}
Write $\phi=\exists X \ \forall Y \ \psi(\varThree,{\bm V}_{<a},X,Y)$ and suppose that ${\bm W}$ is an $a$-tuple of sets and $\model$ a model with $|\model|_{<a}={\bm W}$. Then, if $\mathfrak M\models \phi({\bm C}_{<a})$, it follows that for some $m$, $\mathfrak M\models \forall Y \ \psi \ ({\bm C}_{<a},C_m, Y )$. But since by assumption $\mathfrak M$ is a $\beta$-model, it follows that $\forall Y \ \psi({\bm W} ,|\model|_m,Y)$ holds, hence so does $\phi=\exists X \ \forall Y \ \psi({\bm W},X,Y)$.
\end{proof}

A good part of the theory of $\beta$-models may be formalized within $\pica$. Theorems \ref{TheoPiti} and \ref{TheoBetaExists} may be found in \cite{Simpson:2009:SubsystemsOfSecondOrderArithmetic}. Recall that we defined the theories $\atr$ and ${\bm \Pi}^1_\omega\text{-}\piti$ in Section \ref{SubsecSSOA}.

\begin{theorem}\label{TheoPiti}
It is provable in $\atr$ that, for every countable coded $\beta$-model $\mathfrak M$, $\mathfrak M\models{\bm \Pi}^1_\omega \text{-}\piti$.
\end{theorem}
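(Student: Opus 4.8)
The plan is to reason inside $\atr$, fix a countable coded $\beta$-model $\mathfrak M$, and show that for every well-order $\Lambda$ (coded in $\mathfrak M$) and every formula $\phi$ in the language of $\mathfrak M$, the transfinite induction principle ${\tt TI}_\phi(\Lambda)$ holds in $\mathfrak M$. The key observation is that $\beta$-models are absolute for $\Pi^1_1$ (by definition) and, by Lemma~\ref{LemmAbsolute}, even for $\Sigma^1_2$; in particular, if $\mathfrak M\models{\tt WO}(\Lambda)$ for some $\Lambda$ with $|\mathfrak M|_{<a}=\ConOne$, then $\Lambda$ is genuinely a well-order in the ambient universe. This is because ${\tt WO}(\Lambda)$ unwinds to ${\tt LO}(\Lambda)\wedge\forall X\, {\tt TI}_{\lambda\in X}(\Lambda)$, and while the $\forall X$ quantifier ranges only over $|\mathfrak M|$ internally, $\beta$-absoluteness of the arithmetical matrix lets us conclude that $\Lambda$ has no descending sequence at all, since any such sequence would be a set whose range witnesses failure of a $\Pi^1_1$ (in fact $\Pi^0_1$-in-a-set-parameter) statement that $\mathfrak M$ believes.

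First I would make this precise: working in $\atr$, take $\mathfrak M$ a countable coded $\beta$-model, and suppose $\mathfrak M\models{\tt WO}(\Lambda)$. I claim $\Lambda$ is a genuine well-order. Indeed, ${\tt LO}(\Lambda)$ is arithmetical, hence absolute. For well-foundedness, if $\Lambda$ had a descending chain, then since $\atr$ proves $\Sigma^1_1$-bounding / has enough comprehension to reflect on arithmetic-in-$\Lambda$ facts, one derives a contradiction with $\mathfrak M\models\wf(\Lambda)$ using $\beta$-absoluteness applied to the $\Pi^1_1$ statement $\wf(\Lambda)$ itself (which is exactly what the $\beta$-model property delivers for the parameter $\Lambda\in\mathfrak M$). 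Now, given $\mathfrak M\models{\tt WO}(\Lambda)$, we know $\Lambda$ is well-ordered in $V$, so we may perform genuine transfinite induction along $\Lambda$. To verify $\mathfrak M\models{\tt TI}_\phi(\Lambda)$ for arbitrary $\phi$ in the language of $\mathfrak M$ (with set-constants from $\mathfrak M$), assume $\mathfrak M\models{\tt Prog}_\phi(\Lambda)$; then by external transfinite induction on $\Lambda$ — using the satisfaction class $S_{\mathfrak M}$ to evaluate $\phi$ at each stage, and the progressiveness assumption read off from $S_{\mathfrak M}$ — we get that $\mathfrak M\models\phi(\bar\lambda,\dots)$ for every $\lambda\in|\Lambda|$, i.e.\ $\mathfrak M\models\forall\lambda\in|\Lambda|\,\phi(\lambda)$. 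This establishes $\mathfrak M\models\forall\Lambda\,({\tt WO}(\Lambda)\to{\tt TI}_\phi(\Lambda))$ for each formula $\phi$, which is precisely the axiom scheme ${\bm\Pi}^1_\omega\text{-}\piti$ (note $\piti$ is schematic over all formulas, so this suffices), and since $\mathfrak M\models\aca$ (a countable coded $\omega$-model of a finitely axiomatizable-over-$\eca$ theory, which $\atr$ verifies), we conclude $\mathfrak M\models{\bm\Pi}^1_\omega\text{-}\piti$.

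The main obstacle I anticipate is the step establishing that $\mathfrak M\models{\tt WO}(\Lambda)$ genuinely implies $\Lambda$ is well-founded in $V$ — this is where the $\beta$-model hypothesis must be used with care, because internally ${\tt WO}(\Lambda)$ only quantifies set-variables over $|\mathfrak M|$, so a priori $\Lambda$ could be ``internally well-founded'' yet have an external descending sequence. The resolution is that $\wf(\Lambda)\equiv\forall X\,{\tt TI}_{\lambda\in X}(\Lambda)$, and $\beta$-absoluteness says a $\Pi^1_1$ statement with parameters in $\mathfrak M$ holds in $\mathfrak M$ iff it holds in $V$; applying this to $\wf(\Lambda)$ itself (a $\Pi^1_1$ formula with parameter $\Lambda$) gives exactly what is needed. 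One must check that $\wf(\Lambda)$ is literally $\Pi^1_1$ in $\Lambda$ after the obvious manipulations, and that $\atr$ proves the relevant instance of the $\beta$-model reflection property, but this is routine given the setup in Simpson's book cited before the statement. A secondary technical point is making the external transfinite induction argument precise within $\atr$: since $\Lambda$ is a genuine well-order, $\atr$'s arithmetical transfinite recursion (along $\Lambda$, relative to $S_{\mathfrak M}$) suffices to define the set of stages at which $\phi$ holds and to drive the induction through the successor and limit cases uniformly.
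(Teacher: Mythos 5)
The paper does not actually prove this statement: it is imported verbatim from Simpson's book (the sentence preceding the theorem says Theorems~\ref{TheoPiti} and \ref{TheoBetaExists} ``may be found in'' \cite{Simpson:2009:SubsystemsOfSecondOrderArithmetic}), so there is no in-paper argument to compare against. Your reconstruction is the standard one and its core is correct: apply the $\beta$-property to the ${\Pi}^1_1$ formula ${\tt WF}(\Lambda)$ to conclude that $\Lambda$ is genuinely well-founded in the ambient universe, form the set $X=\{\lambda : \phi(\dot\lambda)\in S_{\mathfrak M}\}$ by comprehension relative to the satisfaction class, read progressiveness of $X$ off from $\mathfrak M\models{\tt Prog}_\phi(\Lambda)$ via the Tarski clauses (here one needs that $S_{\mathfrak M}$ is total and consistent on the relevant subformulas, so that the one-directional clauses of the paper's definition reverse), and apply the externally true instance ${\tt TI}_{\lambda\in X}(\Lambda)$. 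Two corrections. First, your justification of $\mathfrak M\models\aca$ (needed because ${\bm\Pi}^1_\omega\text{-}\piti$ is defined over $\aca$) is a non sequitur: being a countable coded $\omega$-model of a finitely axiomatizable theory gives nothing, as the $\omega$-model of all recursive sets shows. The correct argument is that each comprehension instance $\exists Y\,\forall n\,(n\in Y\leftrightarrow\phi(n,X))$ is a $\Sigma^1_1$ statement with parameter $X\in|\mathfrak M|$, true in the ambient universe since $\aca\subseteq\atr$, hence true in $\mathfrak M$ because the $\beta$-property is a biconditional for ${\Pi}^1_1$ formulas and $S_{\mathfrak M}$ decides the negation. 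Second, you invoke more machinery than is used: no arithmetical transfinite recursion is needed to ``define the set of stages at which $\phi$ holds'' (that set is $\Delta^0_1$ in $S_{\mathfrak M}$), Lemma~\ref{LemmAbsolute} plays no role, and the descending-sequence discussion in your first paragraph is a detour --- the single application of the $\beta$-property to ${\tt WF}(\Lambda)$, which you correctly isolate in your final paragraph, is all that is required. With the $\aca$ step repaired, the argument is complete.
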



\begin{theorem}\label{TheoBetaExists}
It is provable in $\pica$ that for every $a$-tuple of sets ${\ConOne}$ there is a full $\beta$-model $\mathfrak M$ such that $|\model|_{<a}={\ConOne}$.
\end{theorem}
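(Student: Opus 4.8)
The result is classical (see \cite[\S VII.2]{Simpson:2009:SubsystemsOfSecondOrderArithmetic}); the plan is to recall the construction and its verification. Fix the $a$-tuple $\ConOne=\langle A_i\rangle_{i<a}$ and let $A=\bigoplus_{i<a}A_i$ be its join. Writing $\mathcal O^X$ for a fixed $\Pi^1_1(X)$-complete set (a ``hyperjump'' of $X$), the goal is to realize $\mathfrak M$ as the least $\omega$-model containing $\ConOne$ that is closed under $X\mapsto\mathcal O^X$. First I would note that $\pica$ proves $\mathcal O^X$ exists for every $X$, since $\mathcal O^X=\{e:\psi(e,X)\}$ for a universal $\Pi^1_1$ formula $\psi$, so $\Pi^1_1$-comprehension applies directly. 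The key step — and the one I expect to be the main obstacle — is to show, still in $\pica$, that the $\omega$-fold iteration of this operation starting from $A$ exists as a single set $Z$ coding a sequence $\langle Z_n\rangle_{n\in\omega}$ with $Z_0=A$ and $Z_{n+1}=\mathcal O^{Z_n}$; the naive definition of $Z$ is only $\Delta^1_2$ in $A$, so one cannot simply invoke comprehension and must argue more carefully, essentially as in \cite{Simpson:2009:SubsystemsOfSecondOrderArithmetic}.

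Granting the set $Z$, the remainder is routine and can be carried out already in $\aca$. I would take $|\mathfrak M|$ to be an enumeration of $\{W:\exists n\ W\le_{\mathrm T}Z_n\}$ whose first $a$ entries are $A_0,\dots,A_{a-1}$, so that $|\mathfrak M|_{<a}=\ConOne$; this set is arithmetical in $Z$, and the associated full satisfaction class $S_{\mathfrak M}$, defined from the Tarski clauses by recursion on formula complexity, is arithmetical in $|\mathfrak M|$, so both exist in $\aca$. That $\mathfrak M=\langle |\mathfrak M|,S_{\mathfrak M}\rangle$ is then a full $\omega$-model is immediate: closure of $|\mathfrak M|$ under Turing reducibility and join is built into its definition, and $S_{\mathfrak M}$ satisfies the clauses of a satisfaction class by construction.

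Finally I would verify the $\beta$-model clause: for $\phi\in{\bm\Pi}^1_1$ with parameters from $\mathfrak M$, $\phi$ is true iff $\mathfrak M\models\phi$; equivalently, $\Sigma^1_1$-absoluteness. The direction ``true implies satisfied'' is Lemma~\ref{LemModSound}. For the converse, suppose $\exists X\,\psi(X,\ConOne')$ is a true $\Sigma^1_1$ statement with $\ConOne'\in|\mathfrak M|$; choose $n$ with $\ConOne'\le_{\mathrm T}Z_n$, so $\exists X\,\psi(X,Z_n)$ is a true $\Sigma^1_1(Z_n)$ statement, and by the standard witnessing lemma for true $\Sigma^1_1$ statements it has a witness $X$ recursive in $\mathcal O^{Z_n}=Z_{n+1}$, hence $X\in|\mathfrak M|$; by arithmetical absoluteness $\mathfrak M\models\psi(X)$, so $\mathfrak M\models\exists X\,\psi(X)$. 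This is exactly the defining property of a $\beta$-model. To summarize, all steps past the construction of $Z$ are standard bookkeeping provable in $\aca$; the genuine use of $\pica$ (and the main difficulty) is producing the iterated-hyperjump set $Z$.
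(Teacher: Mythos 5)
The paper offers no proof of this theorem at all---it simply cites \cite[\S VII.2]{Simpson:2009:SubsystemsOfSecondOrderArithmetic} (Theorem VII.2.10 there)---and your sketch is precisely that classical construction (hyperjump existence by ${\bm\Pi}^1_1$ comprehension, the $\omega$-iterated hyperjump, Turing-downward closure, and Kleene's basis theorem for $\beta$-ness, with the ``true implies satisfied'' direction given by Lemma~\ref{LemModSound}), with the one genuinely $\pica$-specific step, the existence of the iterated-hyperjump set $Z$, correctly identified as the crux and deferred to the same source, so you are taking essentially the same route as the paper. The only slip worth noting is your claim that the full satisfaction class $S_{\mathfrak M}$ is arithmetical in $|\mathfrak M|$ and hence available in $\aca$: by Tarski's theorem it cannot be arithmetical in the code, and its existence needs more than $\aca$ (it is, e.g., $\Delta^1_1$ in the code and provable in $\acapl$ or $\atr$), which is harmless here since the ambient theory is $\pica$.
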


With these results in mind, we can now easily prove that comprehension implies reflection.

\begin{lemma}\label{LemmaATRRFNOne}

Let $U,T$ be theories such that $U$ extends $\aca$ and $\rho\leq\omega$. If $U$ proves that any $a$-tuple $\ConOne$ can be included in an $\omega$-model satisfying $T$ of rank $\rho $, then for any $\phi(\bm z,\bm V_{<a})\in {\bm \Pi}^1_2$ with all free variables shown, $U$ proves that
\begin{equation}\label{EqPReflects}
\forall P \ \forall \bm X \ \forall \bm z \ \Big ({\rm SPC}^\rho_T (P)\wedge \big (\ulcorner \phi(\dot{\bm z},\bm C_{<a}) \urcorner \in P\big)\rightarrow \phi(\bm z,\bm X)  \Big).
\end{equation}
If $U$ proves that any $a$-tuple $\ConOne$ can be included in a $\beta$-model satisfying $T$ of rank $\rho\leq\omega$, \eqref{EqPReflects} holds for $\phi\in {\bm \Pi}^1_3$.
\end{lemma}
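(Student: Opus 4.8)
The plan is to argue directly inside $U$ and to establish \eqref{EqPReflects} by stripping off the leading set quantifier of $\phi$. Fix an $a$-tuple $\bm A$, a tuple $\bm n$, and a set $P$ with ${\tt SPC}^\rho_T(P)$ and $\ulcorner\phi(\dot{\bm n},\bm O)\urcorner\in P$, and write $\phi\equiv\forall X\,\pi$, where $\pi\in{\bm\Sigma}^1_1$ for the first claim and $\pi\in{\bm\Sigma}^1_2$ for the second. It suffices to fix an arbitrary set $B$ and prove $\pi(\bm n,\bm A,B)$. The common opening move is Lemma \ref{LemmOmSoundACA}.\ref{ItOnSOundOne}, which is available in $\aca\subseteq U$: from ${\tt SPC}^\rho_T(P)$ and $\ulcorner\phi(\dot{\bm n},\bm O)\urcorner\in P$ it gives $\ModelsPX\rho T{\bm A}\phi(\dot{\bm n},{\bm C}_{<a})$, i.e.\ $\phi$ holds in every $\omega$-model of $T$ of rank at least $\rho$ whose first $a$ members are $\bm A$. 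Applying the hypothesis on $U$ to the tuple obtained from $\bm A$ by appending $B$, we get such a model $\model$ with $|\model|_{<a+1}=\bm A,B$; since $|\model|_{<a}=\bm A$, it is among the models quantified over in $\ModelsPX\rho T{\bm A}\phi(\dot{\bm n},{\bm C}_{<a})$, so $\model\models\phi(\bar{\bm n},{\bm C}_{<a})$, and the satisfaction clause for $\forall X$ yields $\model\models\pi(\bar{\bm n},{\bm C}_{<a},C_a)$.

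It remains to transfer $\pi$ from $\model$ back to reality. For the first claim, write $\pi\equiv\exists Y\,\theta$ with $\theta$ arithmetical; the clause for $\exists Y$ supplies an $m$ with $\model\models\theta(\bar{\bm n},{\bm C}_{<a},C_a,C_m)$, and since truth of an arithmetical sentence in an $\omega$-model coincides with its truth (this is immediate from the recursive clauses defining a satisfaction class, as noted just before Lemma \ref{LemModSound}, and needs no consistency assumption on $S_\model$), we obtain $\theta(\bm n,\bm A,B,|\model|_m)$, hence $\pi(\bm n,\bm A,B)$. As $B$ was arbitrary, $\phi(\bm n,\bm A)$ follows. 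For the second claim the only change is that $\pi\in{\bm\Sigma}^1_2$ and $\model$ is now a $\beta$-model: the elementary arithmetical transfer above is replaced by Lemma \ref{LemmAbsolute}, which, because $\model$ is a $\beta$-model with $|\model|_{<a}=\bm A$ and $|\model|_a=B$, turns $\model\models\pi(\bar{\bm n},{\bm C}_{<a},C_a)$ directly into $\pi(\bm n,\bm A,B)$; once again, $B$ being arbitrary gives $\phi(\bm n,\bm A)$.

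The crux of the argument---and the reason the two halves land at ${\bm\Pi}^1_2$ and ${\bm\Pi}^1_3$---is precisely this last transfer step. An arbitrary $\omega$-model lets ``true in $\model$'' descend to ``true'' only along the arithmetical layer, since the witness produced by the $\exists$-clause is an honest set of the ambient universe; this absorbs exactly one outer universal set quantifier, capping the method at ${\bm\Pi}^1_2$. Upgrading $\model$ to a $\beta$-model raises the absolute layer to ${\bm\Sigma}^1_2$ via Lemma \ref{LemmAbsolute}, buying one further quantifier and reaching ${\bm\Pi}^1_3$. The remaining bookkeeping---keeping the oracle constants $\bm O$, the model constants $\bm C$, and the free set variables $\bm X$ aligned, checking that $\model$ really lies in the scope of the model quantifier in $\ModelsPX\rho T{\bm A}\phi$, and (when $\rho<\omega$) reading ``of rank $\rho$'' with $\rho$ no smaller than the rank of $\phi$---is routine.
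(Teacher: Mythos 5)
Your proposal is correct and follows essentially the same route as the paper: apply Lemma \ref{LemmOmSoundACA}.\ref{ItOnSOundOne} to the SPC, instantiate the hypothesis on $U$ with the tuple $\bm A,B$ for an arbitrary $B$, peel off the leading $\forall X$ via the satisfaction clause, and transfer $\pi$ back to the ambient universe (arithmetical/$\bm\Sigma^1_1$ upward absoluteness for $\omega$-models, Lemma \ref{LemmAbsolute} for $\beta$-models). Your explicit witness-chasing for the $\bm\Sigma^1_1$ case is in fact a slightly cleaner justification of the first claim than the paper's citation, but the argument is the same.
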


\begin{proof}
For the first claim, let $\phi(\varThree,{\bm V}_{<a})=\forall Y \ \psi(\varThree,{{\bm V}_{<a}},Y )$, where $\psi\in {\bm \Sigma}^1_1$ with all free variables shown, and reason in $\aca$. Fix an $a$-tuple ${\ConOne}$ of sets, a tuple of natural numbers $\varOne$, and a $\rho$-SPC $P$, and assume that $ \phi(\bar\varOne,{\bm C}_{<a}) \in P$. Let $Y$ be arbitrary and $\mathfrak M$ be an $\omega$-model satisfying $T$ with $|\model|_{\leq a}= ( {\ConOne},Y ) $. 
Then, by Lemma \ref{LemmOmSoundACA}.\ref{ItOnSOundOne}, $\mathfrak M\models\psi(\bar\varOne,{\bm C}_{<a},C_{a})$, so that by Lemma \ref{LemmAbsolute}, $\psi(\varOne,{\ConOne},Y)$ holds. Since $Y$ was arbitrary, we conclude that $\phi(\varOne,\ConOne)=\forall Y \psi(\varOne,{\ConOne}, Y)$ holds. The second claim is similar, but we take $\psi\in {\bm \Sigma}^1_2$ and use Lemma \ref{LemmAbsolute}.
\end{proof}

Using the fact that $\pica$ proves the existence of a $\rho$-SPC, we obtain the following:

\begin{corollary}\label{CorATRRFNTwo}
If $\rho\leq \omega$ and $\pica$ proves that any $a$-tuple $\ConOne$ can be included in a $\beta$-model for $T$ of rank $\rho$, then
\[\pica\vdash \PRFN\fx\rho{{{\bm \Pi}^1_3}}[T].\]
\end{corollary}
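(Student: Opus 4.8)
### Proof plan

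The plan is to reduce the statement about $\FixPX\rho T{}{}$ to the statement about $\rho$-SPCs that has already been established in Lemma \ref{LemmaATRRFNOne}. First I would observe that the hypothesis of the corollary is exactly the hypothesis appearing in the second half of Lemma \ref{LemmaATRRFNOne}, with the base theory $U$ instantiated to $\pica$: namely, $\pica$ proves that any $a$-tuple $\ConOne$ can be included in a $\beta$-model for $T$ of rank $\rho$. Since $\pica$ extends $\aca$, Lemma \ref{LemmaATRRFNOne} applies and gives us, provably in $\pica$, that
\[\forall P \ \forall \bm A \ \forall n \ \Big ({\tt SPC}^\rho_T (P)\wedge \big (\ulcorner \phi(\dot{\bm n},\bm O) \urcorner \in P\big)\rightarrow \phi(\bm n,\bm A)  \Big)\]
for every $\phi\in{\bm \Pi}^1_3$ with the displayed free variables.

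The second step is to eliminate the universally quantified $P$ in favour of the provability operator $\FixPX\rho T{\ConOne}{}$. Reasoning in $\pica$, fix $\ConOne$ and $\varOne$, and suppose $\FixPX\rho T{\ConOne}\phi(\dot\varOne,{\bm O})$. By Lemma \ref{LemmPICAExists}, $\pica$ proves that there exists a set $P$ with $\spc{T|\ConOne}P$, i.e.\ a $\rho$-SPC for $T|\ConOne$; fix such a $P$. By the definition of $\FixPX\rho T{\ConOne}{}$ as a universal quantification over all SPCs, the assumption $\FixPX\rho T{\ConOne}\phi(\dot\varOne,{\bm O})$ yields $\ulcorner\phi(\dot\varOne,{\bm O})\urcorner\in P$. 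Now I apply the formula from Lemma \ref{LemmaATRRFNOne} to this particular $P$, $\ConOne$, and $\varOne$ to conclude $\phi(\varOne,\ConOne)$. Since $\ConOne$ and $\varOne$ were arbitrary, this establishes $\PRFN\fx\rho{{{\bm \Pi}^1_3}}[T]$ over $\pica$.

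I do not anticipate a genuine obstacle here: the content has all been pushed into Lemma \ref{LemmaATRRFNOne} (which in turn rests on Lemma \ref{LemmAbsolute}, Lemma \ref{LemmOmSoundACA}, and Theorem \ref{TheoBetaExists}), and into Lemma \ref{LemmPICAExists}. The only point requiring a little care is the bookkeeping of free variables and the role of the oracle constants ${\bm O}_{<a}$ versus the set parameters $\ConOne$ in the statement \eqref{EqPReflects}, but this matching is exactly as set up in the statement of Lemma \ref{LemmaATRRFNOne}, so the corollary follows immediately by instantiating $U=\pica$ and then using the provable existence of an SPC to discharge the quantifier over $P$.
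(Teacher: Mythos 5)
Your proposal is correct and matches the paper's intended argument exactly: the corollary is obtained by instantiating $U=\pica$ in the second part of Lemma \ref{LemmaATRRFNOne} and then discharging the universal quantifier over SPCs via the provable existence of a $\rho$-SPC in $\pica$ (Lemma \ref{LemmPICAExists}), which is precisely what the paper signals with the phrase ``Using the fact that $\pica$ proves the existence of a $\rho$-SPC.'' No gaps.
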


We may now summarize our results in our main theorem.

\begin{theorem}\label{TheoMain}
Let $ T$ be a theory such that $\pica$ proves that any set-tuple $\ConOne$ can be included in a $\beta$-model for $T$. Let $\picaform\subseteq\Phi\subseteq {\bm \Pi } ^1_3$. Then, for any $\rho\leq \omega$,
\begin{align}\label{EqTheoMain}
\nonumber \pica & \equiv \eca + \PRFN\fx\rho {\Phi}[T] \\
& \equiv \eca + \PCONS \fx\rho {\Phi} [T] \equiv \eca + \PCons \fx\omega [T].
\end{align}

\end{theorem}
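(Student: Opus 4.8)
The strategy is to assemble a cycle of implications between the four theories appearing in \eqref{EqTheoMain}, using the machinery already developed. First I would observe that all four extensions of $U$ contain $\pica$: for $U + \PRFN\fx\rho{\Gamma}[T]$ this follows from Theorem \ref{TheoRFNATR}, since $\picaform\subseteq\Gamma$ and hence $\PRFN\fx\rho{\Gamma}[T]\vdash\PRFN\fx\rho{\picaform}[T]$; for $U + \PCONS\fx\rho{\Gamma}[T]$ it follows by first applying Lemma \ref{LemConsVsRFN}.(1) — legitimate because ${\bm\Sigma}^0_1\subseteq\picaform\subseteq\Gamma\subseteq{\bm\Pi}^1_1$ fails in general (our $\Gamma$ reaches ${\bm\Pi}^1_3$), so instead I would use Lemma \ref{LemConsVsRFN}.(2) together with Lemma \ref{LemmConsImpCONS} in the $\rho=\omega$ case, and for general $\rho$ I would note $\PCONS\fx\rho\Gamma[T]\supseteq\PCONS\fx\rho{\picaform}[T]\equiv\PRFN\fx\rho{\picaform\cup\Neg\picaform}[T]\vdash\PRFN\fx\rho{\picaform}[T]\vdash\pica$ via Lemma \ref{LemConsVsRFN}.(1) applied to the subclass $\picaform$ and Theorem \ref{TheoRFNATR}; for $U+\PCons\fx\omega[T]$ it follows from Lemma \ref{LemConsVsRFN}.(2) giving $\PRFN\fx\omega{{\bm\Pi}^1_2}[T]$, whence $\pica$ by Theorem \ref{TheoRFNATR} since ${\picaform}\subseteq{\bm\Pi}^1_2$.

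Second, having reduced everything to extensions of $\pica$, I would show $\pica$ proves each of the reflection/consistency principles on the right. By hypothesis $\pica$ proves every set-tuple $\ConOne$ embeds in a $\beta$-model for $T$, so Corollary \ref{CorATRRFNTwo} gives $\pica\vdash\PRFN\fx\rho{{\bm\Pi}^1_3}[T]$, hence $\pica\vdash\PRFN\fx\rho\Gamma[T]$ since $\Gamma\subseteq{\bm\Pi}^1_3$. For the consistency versions, $\PRFN\fx\rho{{\bm\Pi}^1_3}[T]$ implies $\PCONS\fx\rho{{\bm\Pi}^1_3}[T]$ directly (a contradiction $\FixPX\rho T\ConOne\phi\wedge\FixPX\rho T\ConOne\Neg\phi$ reflects to $\phi\wedge\Neg\phi$, exactly as in the easy direction of Lemma \ref{LemConsVsRFN}), which yields $\PCONS\fx\rho\Gamma[T]$; and $\PCons\fx\omega[T]$ follows from $\PCONS\fx\omega{{\bm\Pi}^1_\omega}[T]$ by one application of cut, as in Lemma \ref{LemmConsImpCONS} read in reverse — more directly, $\PRFN\fx\omega{{\bm\Pi}^1_2}[T]\equiv\PCons\fx\omega[T]$ over $\eca$ by Lemma \ref{LemConsVsRFN}.(2), and we have just shown $\pica$ proves the left side.

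Combining the two directions closes the loop: each of $U+\PRFN\fx\rho\Gamma[T]$, $U+\PCONS\fx\rho\Gamma[T]$, $U+\PCons\fx\omega[T]$ contains $\pica$ (using $U\supseteq\eca$ throughout), and $\pica$ (which contains $U$ since $U\subseteq\pica$) proves each principle, so all four theories coincide with $\pica$. The one point requiring care — and the likely main obstacle — is the bookkeeping around the complexity class $\Gamma$: the clean equivalence of Lemma \ref{LemConsVsRFN}.(1) is stated only for $\Gamma\subseteq{\bm\Pi}^1_1$, so to handle $\PCONS\fx\rho\Gamma[T]$ for $\Gamma$ up to ${\bm\Pi}^1_3$ I must route the argument through the sub-principle $\PCONS\fx\rho{\picaform}[T]$ (where $\picaform\subseteq{\bm\Pi}^1_1$ does hold, since ${\bm\Pi}^1_1/{\bm\Sigma}^0_2\subseteq{\bm\Pi}^1_1$) to extract $\pica$, and separately verify that $\pica$ proves the full $\Gamma$-version via the $\beta$-model route of Corollary \ref{CorATRRFNTwo}; the consistency-to-consistency step for $\rho<\omega$ also needs the existence of a $\rho$-SPC in $\pica$ (Lemma \ref{LemmPICAExists}) to make the cut-elimination-free argument go through, exactly as noted after Lemma \ref{LemmExistSPC}.
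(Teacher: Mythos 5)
Your overall architecture coincides with the paper's proof, which is a one-line appeal to Lemmas \ref{LemmRefComp} and \ref{LemConsVsRFN}, Theorem \ref{TheoRFNATR} and Corollary \ref{CorATRRFNTwo}; in fact your routing of the direction $U+\PCONS\fx\rho{\Gamma}[T]\vdash\pica$ through the sub-schema $\PCONS\fx\rho{{\picaform}}[T]$, where clause (1) of Lemma \ref{LemConsVsRFN} genuinely applies because $\picaform\subseteq{\bm\Pi}^1_1$, is exactly the right way to handle the fact that $\Gamma$ may reach ${\bm\Pi}^1_3$, and is more explicit than the paper on this point.

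One step fails as written, however: the claim that $\PRFN\fx\rho{{{\bm\Pi}^1_3}}[T]$ yields $\PCONS\fx\rho{{{\bm\Pi}^1_3}}[T]$ ``directly'' by reflecting both conjuncts. The easy direction of Lemma \ref{LemConsVsRFN}(1) consumes $\PRFN\fx\rho{{\Gamma\cup\Neg\Gamma}}[T]$: to pass from $\FixPX\rho T{\ConOne}\Neg\phi(\dot\varOne,{\bm O})$ to $\Neg\phi(\varOne,\ConOne)$ you need reflection for $\Neg\phi\in{\bm\Sigma}^1_3$, and Corollary \ref{CorATRRFNTwo} only delivers ${\bm\Pi}^1_3$-reflection. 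This is not merely cosmetic: the $\beta$-model argument of Lemma \ref{LemmaATRRFNOne} rests on ${\bm\Sigma}^1_2$ downward absoluteness (Lemma \ref{LemmAbsolute}) and does not extend to ${\bm\Sigma}^1_3$, whose ${\bm\Pi}^1_2$ matrix is not downward absolute from a $\beta$-model. The repair is cheap and uses only material already on your table: you have $\pica\vdash\PCons\fx\omega[T]$ (via Lemma \ref{LemConsVsRFN}(2) together with Corollary \ref{CorATRRFNTwo}), and Lemma \ref{LemmConsImpCONS} --- read in its \emph{stated} direction, namely that $\eca+\PCons\fx\omega[T]$ proves $\PCONS\fx\rho{{{\bm\Pi}^1_\omega}}[T]$ --- then yields every instance of $\PCONS\fx\rho{\Gamma}[T]$. (Alternatively, derive the contradiction inside the model: if $\phi$ and $\Neg\phi$ both lie in the $\rho$-SPC, Lemma \ref{LemmOmSoundACA}.\ref{ItOnSOundOne} forces both into the satisfaction class of the $\beta$-model, which no satisfaction class admits.) Relatedly, where you invoke Lemma \ref{LemmConsImpCONS} ``read in reverse'' to get $\PCons$ from $\PCONS$, note that the lemma's actual direction is the one you in fact need here; the reverse direction requires a weakening argument that the paper never supplies. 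With this one substitution the cycle of implications closes correctly.
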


\begin{proof}
All inclusions are immediate from Lemmas  \ref{LemmRefComp} and \ref{LemConsVsRFN}, Theorem \ref{TheoRFNATR} and Corollary \ref{CorATRRFNTwo}. 
\end{proof}

\begin{corollary}\label{CorMain}
Let $\mathcal G=\{\taitc,\eca,\rcaa,\rca,\aca,\atr ,{\bm \Pi}^1_\omega\text{-}{\piti} \}$ and $\rho\leq\omega$. Then, \eqref{EqTheoMain} holds for any $T \in \mathcal G$.
\end{corollary}

In view of Theorem \ref{TheoModRFN}, we may extend these results to reflection over higher complexity classes.

\begin{theorem}\label{TheoMainBi}
For any $n \in [1,\omega]$ and $\rho\leq\omega$,
\begin{equation}\label{EqMainBI}
\pica +{\bm \Pi}^1_{n}\text{-}{\rm TI} \equiv \eca +\PRFN\fx\rho{{{\bm \Sigma}^1_{1+n}}} [\aca] .
\end{equation}
\end{theorem}

\proof
Let $B=\pica +{\bm \Pi}^1_{n}\text{-}{\rm TI}$ and $R= \eca +\PRFN\fx\rho{{{\bm \Sigma}^1_{1+n}}} [\aca]$. First we show that
$B \subseteq R$. Since $ \eca + \PRFN\fx\rho{{ \picaform }} [\aca]  \subseteq R$, we obtain
$\pica \subseteq R$. We have that
\[\pica + \PRFN\fx\rho{{{\bm \Sigma}^1_{1+n}}} [\aca]  \vdash \PRFN\mo \rho {{{\bm \Sigma}^1_{1+n}}} [\aca]\]
by Lemma \ref{LemmEquivRFN}. But, $\pica+\PRFN\mo \rho {{{\bm \Sigma}^1_{1+n}}} [\aca]\vdash \PRFN\mo \rho {{{\bm \Sigma}^1_{1+n}}} [\aca]$ by Lemma \ref{LemModPar}, and we obtain $R  \vdash {\bm \Pi}^1_{n}\text{-}{\rm TI}$
by Theorem \ref{TheoModRFN}.

Next we show that $R\subseteq B$. By Theorem \ref{TheoModRFN} and the fact that $\pica$ proves that any valuation can be extended to a full valuation, we have that $B \vdash \PRFN\mo \rho {{{\bm \Sigma}^1_{1+n}}} [\aca] $. But, by Lemma \ref{LemmEquivRFN},
\[\pica + \PRFN\mo \rho {{{\bm \Sigma}^1_{1+n}}} [\aca] \vdash \PRFN\fx \rho{{{\bm \Sigma}^1_{1+n}}} [\aca] .\]
\endproof

\begin{remark}
Note that $\PRFN\fx\rho\Phi[T]$ is equivalent to the conjunction of the following two:
\begin{enumerate}[label=(\arabic*)]

\item\label{ItRemOne} Every $\rho$-SPC contains only true formulas from $\Phi$,

\item\label{ItRemTwo} there exists a $\rho$-SPC containing any tuple of parameters $\bm A$.

\end{enumerate}
Thus it is tempting to conjecture that either \ref{ItRemOne} or \ref{ItRemTwo} is sufficient to obtain $\pica$. But this is not the case. Observe that $\aca$ proves that $\rca$ has $\omega$-models of any finite rank $\rho$ \cite[Lemma VII.2.2]{Simpson:2009:SubsystemsOfSecondOrderArithmetic}, hence by Lemma \ref{LemmaATRRFNOne}, it proves that any $\rho$-SPC for $\rca$ reflects ${\bm \Pi}^1_2$ formulas, yet $\aca\subsetneq\pica$. Similarly, $\atr$ proves that $\aca$ has full $\omega$-models \cite[Theorem VIII.1.13]{Simpson:2009:SubsystemsOfSecondOrderArithmetic}, so it proves that any $\omega$-SPC for $\aca$ reflects ${\bm \Pi}^1_2$ formulas. We conclude that \ref{ItRemOne} is not sufficient.

Meanwhile, by Lemma \ref{theorem:AddingExistenceIPCsRemainsEquiconsistent}, $T=\aca+\exists P \ {\rm SPC}^\rho_{\aca}(P)$ is equiconsistent with $\aca$, hence $T\subsetneq\pica$. It follows that \ref{ItRemTwo} is not sufficient either.

On the other hand, the reader may verify, using Lemma \ref{LemmaATRRFNOne}, that
\[\pica\equiv \aca+\exists P \ {\rm SPC}^0_{\rca}(P)\equiv \atr+\exists P \ {\rm SPC}^\omega_{\aca}(P).\]
\end{remark}




 \bibliographystyle{splncs04}
 \bibliography{References}

\end{document}